\shorttitle{Logistic process} 
\def\bal{\begin{align}}
\def\eal{\end{align}}
\def\cd{ \ \stackrel{(d)}{\rightarrow} \ }
\def\cp{ \ \stackrel{(p)}{\rightarrow} \ }
\def\dlt{\delta}
\def\sgn{\mathrm{sgn}}
\def\tld{\tilde}
\def\lf{\lfloor}
\def\rf{\rfloor}
\def\gma{\gamma}
\def\nid{\noindent}
\def\N{\mathbb{N}}
\def\1{\mathbf{1}}
\def\eps{\epsilon}
\def\ol{\overline}
\def\nid{\noindent}
\def\tbf{\textbf}
\def\enumrom{\begin{enumerate}[noitemsep,label={(\roman*)}]}
\def\enumar{\begin{enumerate}[noitemsep,label={\arabic*.}]}
\def\enumalph{\begin{enumerate}[noitemsep,label={\alph*)}]}
\def\enumend{\end{enumerate}}
\def\itemgo{\begin{itemize}[noitemsep]}
\def\itemend{\end{itemize}}
\def\P{\mathbb{P}}
\def\E{\mathbb{E}}
\def\Q{\mathbb{Q}}
\newcommand{\R}{\mathbb{R}}
\newcommand{\Z}{\mathbb{Z}}
\newcommand{\ep}{\epsilon}
\begin{document}

\title{Extinction time of the logistic process} 

\authorone[University of British Columbia, Okanagan Campus]{Eric Foxall} 

\addressone{3333 University Way, Kelowna BC Canada V1V 1V7} 
\emailone{efoxall@mail.ubc.ca} 

\begin{abstract}
The logistic birth and death process is perhaps the simplest stochastic population model that has both density-dependent reproduction, and a phase transition, and a lot can be learned about the process by studying its extinction time, $\tau_n$, as a function of system size $n$. A number of existing results describe the scaling of $\tau_n$ as $n\to\infty$, for various choices of reproductive rate $r_n$ and initial population $X_n(0)$ as a function of $n$. We collect and complete this picture, obtaining a complete classification of all sequences $(r_n)$ and $(X_n(0))$ for which there exist rescaling parameters $(s_n)$ and $(t_n)$ such that $(\tau_n-t_n)/s_n$ converges in distribution as $n\to\infty$, and identifying the limits in each case.
\end{abstract}

\keywords{SIS model; stochastic logistic model; diffusion approximation; large deviations; phase transition}

\ams{60G99}{60K35} 

\section{Introduction}
Fix $n\in \N$ and $r\in \R_+$ and consider the continuous-time Markov chain with
\begin{align}\label{eq:logis-model}
X \to \begin{cases}X+1 & \text{at rate} \quad rX(1-X/n) \\
         X-1 & \text{at rate} \quad X.
        \end{cases}
\end{align}
This birth and death process has a simple interpretation in terms of infection spread: in a population of $n$ individuals, $X$ of them are type $I$ (infectious), and the remaining $n-X$ individuals are type $S$ (susceptible). Each type $I$ individual, at rate $r$, selects an individual uniformly at random from the population and infects them. In addition, each type $I$ individual recovers at rate $1$ and is once again immediately susceptible. Since the state transitions at the individual level are $S\to I \to S$, this is called the SIS model.\\

This model first appears, in a somewhat more general form, in a probabilitistic treatment of logistic population growth by Feller, in Section 4 of \cite{feller-1939german} (see \cite{feller-works1}, pages 471--495, for an english translation), where it is shown that the expected value of the process at each point in time is bounded above by the solution to the corresponding logistic differential equation, namely, $X'=rX(1-X/n)-X$. Later, Kurtz \cite{Kurtz-DDMC} gives limit theorems that apply to the sequence of processes $x_n=X_n/n$ (using the subscript to emphasize dependence on $n$), when $r$ is fixed and $\lim_{n\to\infty}x_n(0)$ exists, demonstrating convergence, on compact time intervals, to the solution of the logistic ODE $x'=rx(1-x)-x$ with $x(0)=\lim_{n\to\infty}x_n(0)$, with Gaussian fluctuations of order $1/\sqrt{n}$ that solve an explicit SDE. Extensions of the ODE convergence to longer (growing slowly with $n$) time intervals are possible under certain conditions, see for example the work of Barbour, Chigansky and Klebaner \cite{rand-init}.\\

Later work has focused on understanding the long-term behaviour of the process for large $n$, in particular (i) the time to extinction $\tau_n:=\inf\{t\colon X_n(t)=0\}$ and (ii) the behaviour prior to extinction. In this article we shall focus on the distribution of $\tau_n$, as $n\to\infty$, as a function of the reproductive parameter $r_n$ and the initial value $X_n(0)$, when both are allowed to vary with $n$. We do so by identifying \emph{distributional limits} of $(\tau_n)$, i.e., deterministic sequences $(s_n)$, $(t_n)$ and random variables $T$ such that $(\tau_n-t_n)/s_n$ converges in distribution to $T$ as $n\to\infty$. Much of the work on this problem already exists; here, we provide a survey and concise proofs of known results, then fill in the remaining gaps to produce a complete theory on the topic.\\

There is a parallel line of inquiry on model \eqref{eq:logis-model} that concerns the so-called quasi-stationary distribution (QSD), which is the unique stationary distribution conditioned on non-extinction; see the comparatively recent monograph of Nasell \cite{nasellmono} for a comprehensive survey on this topic, which also includes a discussion of the three phases of the model that we explain below and that are key to understanding the behaviour of the model. As explained in \cite{nasellmono}, if $X_n(0)$ has the QSD then $\tau_n$ is exponentially distributed for each $n$, and by studying the QSD we can obtain estimates of $\E[\tau_n]$, when $X_n(0)$ has either the QSD or the deterministic value $1$. Generally speaking, $\E[\tau_n]$ corresponds to $(t_n)$ in the rescaling described above, then further methods are needed to find $(s_n)$ and $T$. These methods appear progressively over time in works that are not surveyed in \cite{nasellmono}. So, by gathering these methods and previous works and completing the picture, we obtain a complementary perspective to the path taken in \cite{nasellmono}.\\

As is easily shown, the ODE $x'=rx(1-x)-x$ undergoes a transcritical bifurcation at $(x,r)=(0,1)$, and for $r>1$ has the endemic equilibrium $x_\star=1-1/r$. An important question is whether this is mirrored by the behaviour of the stochastic process. Kryscio and Lef{\`e}vre \cite{kryscio} estimate $\E[\tau_n]$, as well as the QSD for large $n$; in particular, they find that for $r<1$, $\E[\tau_n]=O(\log n)$ uniformly over $X_n(0)$ and that the QSD is concentrated near $0$, while for $r>1$, $\E[\tau_n]$ grows exponentially with $n$ uniformly over $X_n(0)$ and the QSD of $x_n$ is concentrated near $x_\star$. Later, Andersson and Djehiche \cite{thresh-lim} refine results on extinction time to limit theorems for the distribution of $\tau_n$ when $r<1$ and $r>1$, in the following cases: (i) $X_n(0)$ is a constant independent of $n$, and (ii) $\lim_{n\to\infty}x_n(0)$ exists and is positive. When $r<1$ and $X_n(0)$ is constant, for large $n$, $X_n(t)$ is well approximated by a branching process up until extinction, so $\tau_n$ converges in distribution to the extinction time of the branching process. When $r<1$ and $\lim_{n\to\infty}x_n(0)>0$, $(1-r)\tau_n-t_n$ converges in distribution to a standard Gumbel $G$, with $\P(G\le w)=e^{-e^{-w}}$, for some deterministic correction $t_n$ of order roughly $\log n$. As pointed out by Doering, Sargsyan and Sander, \cite{doering2005}, the formula given in \cite{thresh-lim} has an error in this case; the corrected formula in the case $r<1$ can be found for ex.~in the recent work of Brightwell, House and Luczak \cite{bright-luz}. When $r>1$ and $\lim_{n\to\infty} x_n(0)>0$, $\tau_n/\E[\tau_n]$ converges in distribution to exponential with rate $1$. The basic reasoning is that $x_n(t)$ is metastable around $x_\star$, with Ornstein-Uhlenbeck-type fluctuations of order $1/\sqrt{n}$, which can be deduced from the results of Kurtz \cite{Kurtz-DDMC}, so a rare and rather sudden event is needed to cause extinction.\\

The behaviour near $r=1$ is more delicate. Nasell \cite{nasell1996} identifies the transition region $r_n - 1=O(1/\sqrt{n})$, also known as transition window or critical regime, and finds that for $r_n=1+c/\sqrt{n}$, $\E[\tau_n] \sim f(c)\sqrt{n}$ for some function $f$, when the distribution of $X_n(0)$ is the QSD. For the same scale of $r_n$, Dolgoarshinnykh and Lalley \cite{crit-scale} prove convergence in distribution of $Y_n(t):=X_n(\sqrt{n} t)/\sqrt{n}$ to the solution of the modified Feller diffusion appearing below in Theorem \ref{thm:nonlin-diff}, when $\lim_{n\to\infty} Y_n(0)$ exists. Brightwell, House and Luczak \cite{bright-luz} obtain the distribution of $\tau_n$ throughout the subcritical regime, i.e., when $\lim_{n\to\infty} r_n \le 1$ and $\sqrt{n}(1-r_n)\to\infty$, for initial values satisfying $X_n(0)(1-r_n)\to\infty$, with particular focus on the barely subcritical case $1-r_n=o(1)$. Until now, the barely supercritical case, where $r_n-1=o(1)$ and $\sqrt{n}\,(r_n-1)\to\infty$, has remained unsolved. \\

In this article we collect and complete existing results concerning the distribution of $\tau_n$ for large $n$ in order to obtain a complete understanding of the extinction time, for all possible choices of $r_n$ and $X_n(0)$. Since the existing results form a somewhat overlapping patchwork of the different cases, I have opted to include a full and self-contained proof of all the different cases, which allows us to more accurately view the true extent of the different methods used. Generally speaking, we shall follow existing methods, although in some cases some improvement was possible; I shall point out when each is the case. In addition, the barely supercritical case is new, and in fact it requires the most effort.\\

The breadth of our results can be summarized as follows. Let $\dlt_n = r_n-1$, $a_n = (|\dlt_n| + 1/\sqrt{n})\,X_n(0)$ and $c_n = \sqrt{n}\,\dlt_n$. Suppose that $X_n(0)\to X_\infty(0) \in \N\cup \{\infty\}$, $r_n\to r_\infty \in [0,\infty)$, $a_n\to a_\infty \in [0,\infty]$ and $c_n\to c_\infty \in [-\infty,\infty]$ as $n\to\infty$. Then there exist sequences $(s_n),(t_n)$ and a non-degenerate distribution function $F:[0,\infty) \to [0,1]$ (that we identify in every case) such that  $\P((\tau_n-t_n)/s_n \le w) \to F(w)$ as $n\to\infty$ at continuity points $w$ of $F$. Moreover, $F$ depends only on the values of the limits $X_\infty(0),r_\infty,a_\infty$ and $c_\infty$. As explained below, $a_n$ measures whether initial values are small or large, and $c_n$ determines the phase of the process.

\section{Phase diagram and main results}

Let us formally state the setting and the assumptions that shall hold throughout the paper. Given $n \in \N$, $r_n\in[0,\infty)$ and $X_n(0) \in \{0,\dots,n\}$, $X_n$ denotes the Markov chain $(X_n(t))_{t \ge 0}$ with initial value $X_n(0)$ and transition rates

\begin{align}\label{eq:logis-model}
X_n \to \begin{cases}X_n+1 & \text{at rate} \quad r_nX_n(1-X_n/n) \\
         X_n-1 & \text{at rate} \quad X_n.
        \end{cases}
\end{align}
The extinction time of $X_n$ is $\tau_n=\inf\{t\colon X_n(t)=0\}$. We fix a sequence of values $(X_n(0))_{n \ge 1}$ and $(r_n)_{n\ge 1}$ and seek deterministic sequences $(t_n)$ and $(s_n)$ and a random variable $T$ such that $(\tau_n-t_n)/s_n$ converges in distribution to $T$ as $n\to\infty$, that we refer to as a \emph{distributional limit} for $(\tau_n)$, subject to the following assumptions.\\

\nid\tbf{Assumptions.} Let $\delta_n=r_n-1$, $a_n=(\,|\delta_n| + 1/\sqrt{n})\, X_n(0)$ and $c_n = \sqrt{n}\,\delta_n$. As $n\to\infty$, it is assumed that
\begin{align*}
X_n(0) \to X_\infty(0) \in \N\cup\{\infty\}, \quad r_n \to r_\infty \in [0,\infty), \\
a_n \to a_\infty \in [0,\infty] \quad \text{and} \quad c_n \to c_\infty \in [-\infty,\infty].
\end{align*}
\\
The behaviour of $\tau_n$ depends on $r_n$ and $X_n(0)$, or equivalently $r_n$ and $x_n(0)=X_n(0)/n$. So, we can organize our results into a phase diagram for $(r,x) \in \R_+ \times [0,1]$. As observed by Nasell \cite{nasell1996}, parameter values are effectively partitioned into three phases:
\begin{enumerate}[noitemsep]
 \item \textbf{Subcritical}: $c_n \to -\infty$ as $n\to\infty$,
 \item \textbf{Critical}: $c_n \to c_\infty \in \R$ as $n\to\infty$,
 \item \textbf{Supercritical}: $c_n \to \infty$ as $n\to\infty$.
\end{enumerate}
It's important to note that since $c_n=\sqrt{n}\,(r_n-1)$, the critical phase has width $O(1/\sqrt{n})=o(1)$ around the point $r=1$. Along with parameter values, there is a dependence on the initial value $X_n(0)$. The following more refined partition into six regions delineates the different regimes for the limit behaviour of $\tau_n$, as follows:
\enumar
\item \nid\tbf{Discrete}: $X_\infty(0) \in \N$ and $r_\infty \le 1$.

\item \nid\tbf{Linear diffusive}: $X_\infty(0)=\infty$ and either
\begin{enumerate}[noitemsep]
\item subcritical phase and $a_\infty<\infty$, or
\item critical/supercritical phase and $a_\infty= 0$.
\end{enumerate}

\item \nid\tbf{Subthreshold cutoff}: subcritical phase and $a_\infty = \infty$.

\item \nid\tbf{Non-linear diffusive}: critical phase and $a_\infty>0$.

\item \nid\tbf{Threshold}: supercritical phase and $0<a_\infty<\infty$.

\item \nid\tbf{Metastable}: supercritical phase and $a_\infty=\infty$.
\enumend

\nid In each region, the following method is used to establish the distributional limit for $(\tau_n)$.

\enumar
\item \nid\tbf{Discrete}: as observed by Andersson and Djehiche \cite{thresh-lim}, in this region, for each $n$, $X_n$ can be coupled to the linear birth-and-death process $Z_n$ with $Z_n(0)=X_n(0)$ and transition rates
\begin{align}\label{eq:lbdp}
Z_n \to \begin{cases}Z_n+1 & \text{at rate} \quad r_n\, Z_n, \\
         Z_n-1 & \text{at rate} \quad Z_n
        \end{cases}
\end{align}
in such a way that $\P(X_n(t)=Z_n(t) \ \  \text{for all} \ \  t \in [0,\tau_n] \, ) = 1-o(1)$. \\

\item \nid\tbf{Linear diffusive}: as observed by Dolgoarshinnykh and Lalley \cite{crit-scale}, in this region, $X_n(X_n(0)\, t)/X_n(0)$ converges in distribution to the diffusion $Y$ with $Y(0)=1$ and
\begin{align}\label{eq:lin-diff}
dY = -a_\infty \, Y dt + \sqrt{2Y}dB,
\end{align}
where $B$ is standard Brownian motion, and it is not hard to strengthen this result to show that $\tau_n/X_n(0)$ converges in distribution to $\inf\{t \colon Y(t)=0\}$. This explains nicely the form of the limit. The method we will use instead, since it applies across several regions, is that of Brightwell, House and Luczak \cite{bright-luz}, which is essentially the same coupling method as \cite{thresh-lim} but extends their approach by not requiring that $X_n(t)=Z_n(t)$ for all $t\le \tau_n$ with high probability, but instead using both upper and lower bounds $(Z_n)$ and $(Z_n')$, both linear birth-and-death processes, such that the extinction time of both is comparable to $(\tau_n)$. To define the lower bound $(Z_n')$, a ceiling $M_n$ is defined such that the coupling is valid so long as $X_n \le M_n$ and such that $X_n$ hits $0$ before $M_n$ with high probability.\\

\item \nid\tbf{Subthreshold cutoff}: as observed by Brightwell, House and Luczak \cite{bright-luz}, $x_n$ is well approximated by the logistic ODE
$x'=r_\infty\, x\,(1-x)$ until $|\dlt_n|\, X_n =O(1)$ at which point $X_n$ enters the linear diffusive regime and the above coupling method can be used. In practice, they show the coupling method works can be applied once $|\dlt_n|\, X_n \le \omega_n$ for some $(\omega_n)$ tending to $\infty$ slowly enough as $n\to\infty$ as a function of $(c_n)$.\\

\item \nid\tbf{Non-linear diffusive}: as observed by Dolgoarshinnykh and Lalley \cite{crit-scale}, the process $X_n(\sqrt{n}\, t)/\sqrt{n}$ converges in distribution to the diffusion $Y$ with
$Y(0)=\lim_{n\to\infty} X_n(0)/\sqrt{n}$ and 
$$dY = (c_\infty Y - Y^2)\,dt + \sqrt{2Y}dB.$$
In this case we make use of this diffusion limit combined with a result that says $\limsup_n \P(\tau_n/\sqrt{n} >t \mid X_n(0) \le \alpha \sqrt{n}) \to 0$ for each $t>0$ as $\alpha>0$ to establish convergence in distribution of $\tau_n/\sqrt{n}$ to $\inf\{t\colon Y(t)=0\}$. \\

\item \nid\tbf{Threshold}: let $x_n^\star=1-1/r_n$, $X_n^\star = \lf n x_n^\star \rf$ and $\tau_n^\star=\inf\{t\colon X_n(t) \in \{0,X_n^\star\}\}$. Then as observed by Andersson and Djehiche \cite{thresh-lim} when $r_n=r_\infty>1$ and $X_n(0)=X_\infty(0)$ for all $n$, and extended in this article to the whole supercritical phase,
$$\lim_{n\to\infty}\P(X_n(\tau_n^\star) =0 )=\begin{cases} r_\infty^{-X_\infty(0)} & \text{if} \ X_\infty(0)<\infty,\\
e^{-a_\infty} & \text{if} \ X_\infty(0)=\infty. \end{cases}$$
In particular, the limiting probability is in $(0,1)$. Conditioned on $\{X_n(\tau_n^\star)=0\}$, the process behaves effectively as though it is in the linear diffusive regime with parameter $1-1/r_\infty$, and the same approximation applies. On the event $\{X_n(\tau_n^\star)=X_n^\star \}$, the process enters the metastable regime. This dichotomy appears to have been first observed in \cite{thresh-lim}.\\

\item \nid\tbf{Metastable}: In this case, the process $X_n$ reaches $X_n^\star$ relatively quickly, with typical fluctuations described as follows: the process $(X_n(t/\dlt_n)-X_n^\star)/\sqrt{n}$ converges in distribution to the Ornstein-Uhlenbeck process $Y$ described by
$$dY = -Ydt + \sqrt{2/r_\star}dB.$$
General results of this type can be found in the work of Kurtz \cite{Kurtz-DDMC}. As observed in \cite{thresh-lim}, when $r_n=r_\infty>1$ for each $n$, on each excursion from $X_n^\star$, the probability of hitting $0$ can be computed quite precisely, using well-known formulae for birth and death processes. In this article we extend these calculations to the whole supercritical phase, using essentially the same method but exercising greater care in the barely supercritical ($c_\infty=\infty$ and $r_\infty=1$) case.
\enumend

In stating the results for the subcritical phase it is helpful to define $\gma_n=1-r_n=-\dlt_n$. We now give precise statements of the results, take care to cite previous work in the relevant cases.

\begin{theorem}[Discrete case, Theorem 1 (B2) in \cite{thresh-lim}]\label{thm:disc}
Suppose $r_\infty \le 1$ and $X_\infty(0)\in \N$. For each $t>0$, as $n\to\infty$,

\begin{enumerate}[noitemsep]
\item if $r_\infty=1$ then $\P(\tau_n \le t) \to (1+1/t)^{-X_\infty(0)}$, and
\item if $r_\infty<1$ then $\P(\tau_n \le t) \to (1+\gamma_\infty/(e^{\gamma_\infty t}-1))^{-X_\infty(0)}$.
\end{enumerate}
\end{theorem}

Theorem \ref{thm:disc} is proved by first studying the extinction time of the linear birth-and-death process $Z_n$ from \eqref{eq:lbdp}, then transferring the result to $X_n$ using a coupling with the property that $\P(X_n(t)=Z_n(t) \ \text{for all} \ t \in [0,\tau_n])=1-o(1)$.\\

The next result appears not to have been proved yet in any of the listed references, though it follows straightforwardly from the coupling method already discussed from \cite{bright-luz}.

\begin{theorem}[Linear diffusive]\label{thm:lin-diff}
Suppose $r_\infty \le 1$, $X_\infty(0)=\infty$ and either
\enumalph
\item $c_\infty=-\infty$ and $a_\infty<\infty$, or
\item $c_\infty>-\infty$ and $a_\infty=0$.
\enumend

Then $\tau_n/X_n(0) \stackrel{(d)}{\to} H_{a_\infty}$ with
 $$\P(H_{a_\infty}\le w) = \begin{cases} e^{-1/w} & \text{if} \quad a_\infty=0, \\
                e^{-a_\infty/(e^{a_\infty w}-1)} & \text{if} \quad a_\infty>0.
               \end{cases}$$
\end{theorem}

Theorem \ref{thm:lin-diff} is proved in a similar way to Theorem \ref{thm:disc}. In this case, we cannot enforce $X_n(t)=Z_n(t)$ for all $t\in [0,\tau_n]$ with high probability, so instead we bound $X_n$ above and below by linear birth-and-death processes with different parameters. This ``sandwiching'' idea appears in \cite{bright-luz} to prove the final stage of Theorem \ref{thm:sub} (see below). A particular case of Theorem \ref{thm:lin-diff}, where $r=1+Cn^{-\alpha}$ for $\alpha <1/2$, is discussed in \cite{crit-scale}; they prove convergence to the diffusion limit in \eqref{eq:lin-diff}, but not the convergence of $\tau_n$. Although we do not do it here, we could also prove Theorem \ref{thm:lin-diff} via the diffusion limit \eqref{eq:lin-diff}, by the same methods we use to prove Theorem \ref{thm:nonlin-diff}.\\

The next result is the main result of Brightwell, House and Luczak \cite{bright-luz}.

\begin{theorem}[Subthreshold cutoff, Theorem 1.1 in \cite{bright-luz}] \label{thm:sub}
Suppose that $c_\infty=-\infty$ and $a_\infty=\infty$. Let $G$ denote the standard Gumbel: $\P(G\le w)=\exp(-e^{-w})$.\\

Then, $\gma_n \, \tau_n - g_n(X_n(0)) \cd G$ as $n\to\infty$, where
$$g_n(X) = \log (\gma_n^2 n) - \log (r_n + \gma_n n/X).$$
In particular,
$$g_n(X_n(0)) = \begin{cases} \log(\gma_n X_n(0)) + o(1) & \text{if} \quad X_n(0)=o(\gma_n n), \\
						 \log(\gma_n^2 n) - \log(r_\infty+1/b_\star) & \text{if} \quad X_n(0)/\gma_n n \to b_\star \in (0,\infty].\end{cases}$$
						 						
\end{theorem}

In \cite{bright-luz} the parameter $r$ is denoted $\lambda$, and the rate of $X\to X-1$ is equal to $\mu X$ instead of $X$; since $\mu$ can be set to $1$ by a uniform time change, no generality is lost. In \cite{bright-luz}, Theorem \ref{thm:sub} is proved in three stages, which we describe in the present notation.
\enumrom
\item \textit{initial stage}: a differential inequality for $x_n$ (incidentally, the same one originally proved in \cite{feller-1939german}) is used to show that from any initial value, $x_n$ drops to $\gma_n |c_n|^\ep$ for some $\ep>0$ within $o(1/\gma_n)$ amount of time (see their Lemma 4.2 and note that since we take $\mu=1$, in our notation $\mu-\lambda$ is just $\gma_n$, and since $\mu-\lambda \to 0$, $\lambda \to 1$ as $n\to\infty$).\\
\item \textit{intermediate stage}: from any initial value $x_n(0) \le \gma_n |c_n|^\ep$ until the first time that $x_n \le \gma_n |c_n|^{-\ep}$, $x_n$ remains close to the solution of the corresponding logistic equation $x'=rx(1-x)-x$. \\

\item \textit{final stage}: from any initial value $x_n(0)\le \gma_n |c_n|^{-\ep}$, a coupling argument is used to bound $X_n$ both above and below by linear birth-and-death processes with different parameters, as discussed earlier.
\enumend 

Our proof of Theorem \ref{thm:sub} in the initial and final stage is basically identical to the one in \cite{bright-luz}; in the intermediate stage we take a somewhat different approach, that we discuss in Section \ref{sec:sub}.

\begin{theorem}[Non-linear diffusive]\label{thm:nonlin-diff}

Suppose $c_\infty \in \R$ and $a_\infty>0$ and let $Y_n$ denote the rescaled process $Y_n(t) = X_n(\sqrt{n}\, t)/\sqrt{n}$. Let $y=\lim_{n\to\infty}Y_n(0)\in (0,\infty]$. Then, $Y_n$ converges in distribution in to the diffusion $Y$ that solves the SDE
$$dY = Y(c_\infty-Y)\, dt + \sqrt{2Y}dB, \quad Y(0) = y,$$
and $\tau_n/\sqrt{n} \cd T$, where $T=\inf\{t:Y(t)=0\}$.

\end{theorem}

The convergence of $X_n(\sqrt{n} \, t)/\sqrt{n}$ to the diffusion limit $Y$ is proved in \cite{crit-scale}, although they do not prove convergence of $\tau_n$. To obtain the latter we employ the continuous mapping theorem to obtain convergence of the hitting time of $\ep$ for small $\ep>0$, then show that once $Y_n \le \ep$, $Y_n$ is likely to hit $0$ in a short time.\\

The following result is proved in \cite{thresh-lim} in the particular case where $r_n=r_\infty>1$ for all $n$ , although the rapid extinction result is expressed without conditioning on hitting $0$ before $X_n^\star$, which is just a difference in the presentation. Our proof takes the same basic approach as \cite{thresh-lim}, with the following caveats:
\enumrom
\item the calculations become more delicate when $r_\infty =1$, requiring a finer analysis, and
\item we have found a couple of gaps in their proof of the exponential limit (see discussion below the statement of Theorem \ref{thm:meta}), so we took the steps described below to bridge those gaps.
\enumend

\begin{theorem}[Threshold and metastable]\label{thm:meta}
Suppose $c_\infty=\infty$ and $a_\infty>0$. Let $x_n^\star=1-1/r_n$, $X_n^\star = \lf n x_n^\star \rf$, $\tau_n^\star=\inf\{t\colon X_n(t) \in \{0,X_n^\star\}\}$, $A_n^\star = \{X_n(\tau_n^\star)=X_n^\star\}$ and $B_n^\star=(A_n^\star)^c = \{X_n(\tau_n^\star)=0\}$.

\enumar
\item Probability of rapid extinction:
$$\lim_{n\to\infty}\P(B_n^\star )=\begin{cases} r_\infty^{-X_\infty(0)} & \text{if} \ X_\infty(0)<\infty,\\
e^{-a_\infty} & \text{if} \ X_\infty(0)=\infty.\end{cases}$$
In particular $\P(A_n^\star)\to 1$ if $a_\infty=\infty$.\\
\item Scaling of $\tau_n$ on rapid extinction: if $a_\infty<\infty$ and
\enumalph
\item $r_\infty=1$ then for each $t\ge 0$,
$$\P(\tau_n/X_n(0) \le t \mid B_n^*) \to \P( H_{a_\infty} \le t)$$
as $n\to\infty$, as in the linear diffusive regime, and\\
\item if $r_\infty>1$ then conditioned on $(A_n^*)^c$, $\tau_n/r_n$ has the same limit as $\tau_n$ in \\ Case 2. of Theorem \ref{thm:disc}, except with $\dlt_\infty/r_\infty$ in place of $\gma_\infty$.\\
\enumend

\item Scaling of $\tau_n$ at metastability:
\enumalph
\item Expected time to extinction:
$$\E[ \, \tau_n \mid A_n^\star \, ]\sim \sqrt{\frac{2 \pi}{n}} \, \frac{r_n}{\dlt_n^2} \, e^{n \, (\log r_n + 1/r_n-1)}.$$
\item Exponential limit: for each $t\ge 0$, as $n\to\infty$
$$\P(\tau_n/\E[\tau_n \mid A_n^\star] \le t \mid A_n^\star  ) \to 1-e^{-t}.$$
\enumend
\enumend
\end{theorem}

Having access to the prefactors (the stuff in front of the exponential function) in the expected time to extinction allows us to see how it blends into the non-linear diffusive limit as $c_n$ approaches $O(1)$. For $r_n$ near $1$, a Taylor expansion gives $\log r_n+1/r_n-1 \sim \dlt_n^2/2$, so the exponential term is $\exp((1+o(1))n\dlt_n^2/2) = \exp((1+o(1))c_n^2/2)$ since $c_n=\sqrt{n}\dlt_n$, and the prefactor is $\sqrt{2\pi}r_n/(\sqrt{n}\dlt_n^2) = \sqrt{2\pi n}r_n/c_n^2$. Thus when $r_\infty=1$,
$$\E[\tau_n \mid A_n^\star] \sim \frac{\sqrt{2\pi n}}{c_n^2} e^{(1+o(1))c_n^2/2},$$
which is of order $\sqrt{n}$ when $c_n=O(1)$.\\

Much of the proof of Theorem \ref{thm:meta} revolves around precise estimation of various sums that all seem to involve the function
$$\nu(j,k) := \prod_{i=j+1}^k q_-(i)/q_+(i),$$
where $q_-(i) = i, \ q_+(i) = rX(1-X/n)$ is the rate at which $X_t$ decreases, respectively increases, by 1 when $X_t=i$. Let's take a moment to discuss the basic approach for each part:
\enumar 
\item \textit{Probability of rapid extinction:} An explicit formula for $h_+(j):= \P(A_n^\star \mid X_0=j)$ is available, in terms of the transition rates of the process, and can be estimated. Here is where we first encounter $\nu(j,k)$.\\

\item \textit{Scaling on $\tau_n$ on rapid extinction:} Using the Doob h-transform applied to the function $h_-:=1-h_+$, we study $X_n$ conditioned on $B_n^\star$. The transition rates can be written using $h_-$. By estimating transition rates we show the conditioned process corresponds to the setting of Theorem \ref{thm:disc} or \ref{thm:lin-diff}.\\
\item \textit{Scaling of $\tau_n$ at metastability:} Conditioning on $A_n^\star$, we break up $\tau$ into three epochs: the time to hit $X_n^\star$ (approach), the time spent on excursions that return to $X_n^\star$ (sojourn) and the last excursion from $X_n^\star$ to $0$ (fall). We show the approach and fall time are small compared to the sojourn time, in expectation; so far this is the same approach as in \cite{thresh-lim}. We then use a coupling argument that shows the process is ``forgetful'' in order to derive the exponential limit for the sojourn time; this method is a departure from \cite{thresh-lim}.
\enumend
We have also found the following gaps in the proofs in \cite{thresh-lim}:
\enumar

\item \textit{Exponential limit:} the exponential limit for $\tau_n/\E[\tau_n]$ is obtained in \cite{thresh-lim} via the same approach taken here: showing that the approach and fall time are small and that the sojourn time converges to exponential. Using their notation, for each $n$, the sojourn time can be written as a sum
$$\sum_{m=1}^{K_n} \sigma_n(m)$$
where $(\sigma_n(m))_{m=1}^{\infty}$ are i.i.d. and $K_n$ is geometric independent of $(\sigma_n(m))$, with parameter $1/\E[K_n]$ that $\to 0$ as $n\to\infty$. In claiming that the normalized sojourn time has an exponential limit, they appeal to Theorem 8.1A of Kielson \cite{rar-exp}. However, the theory discussed in Keilson, including that result, pertains only to a single i.i.d. sequence $(T_m)$ of random variables. On the other hand, in this case the fact that the process itself depends on $n$ implies that a doubly indexed sequence must be considered. I did some calculations (not included) that suggest that uniform integrability of $\sigma_1(n)$ with respect to $n$ is sufficient in order to adapt Keilson's result. However, in \cite{thresh-lim} this is not done.\\

\item \textit{Expected excursion length:} when expressing the time to extinction as the sum of approach, sojourn, and fall time, it must be assumed that during the sojourn, the process necessarily returns to $X_n^\star$ before hitting zero, which requires conditioning on $A_n^\star$. However, the formula used to compute the expected duration of each excursion from $X_n^\star$, namely $\E[\sigma_n(1)]$, in the proof of Lemma 2 in \cite{thresh-lim}, does not take account at all of conditioning on returning to $X_\star$. The answer comes out correct, since the conditioning does not have much effect, but it cannot be ignored outright.
\enumend

The rest of the paper is organized as follows. In Section \ref{sec:coup} we discuss some basic coupling results that are used a few times throughout the paper. Section \ref{sec:bbp} treats the linear birth-and-death process, essentially proving Theorems \ref{thm:disc} and \ref{thm:lin-diff} for the linearization of $X_n$. In Section \ref{sec:sub} we prove Theorems \ref{thm:disc} and \ref{thm:lin-diff}, and the ``final stage'' of Theorem \ref{thm:sub}. In Section \ref{sec:crit} we prove the rest of Theorem \ref{thm:sub}. In Section 6 we prove Theorem \ref{thm:nonlin-diff}. Finally in Section \ref{sec:super} we prove Theorem \ref{thm:meta}. In most of our proofs we suppress the dependence on $n$, to avoid writing subscripts everywhere. I'll point this out as we go along.

\section{Coupling of Birth and Death processes}\label{sec:coup}

Recall a birth and death (b-d) process is a right-continuous, continuous time Markov chain with state space $\N = \{0,1,2,\dots\}$ that jumps by $\pm 1$ at each transition. There is a natural way to construct such a process, or even multiple such processes with different transition rates, from all initial conditions on a single probability space. We will focus on b-d processes with state space $\{0,\dots,N\}$ for some $N$, since this is all that's needed for this paper. We begin with the case of a single process.

\subsection{Natural coupling for a single process}
A b-d process is defined by its transition rates. For $x \in \{0,\dots,N\}$ let $b(x),d(x)$ be the transition rate from $x$ to $x+1,x-1$ respectively, and assume that $d(0)=0$, and that $b(N)=0$ and define independent Poisson point processes $B(x),D(x)$ on $\R_+$ with respective rates $b(x),d(x)$. As a function of the collection $(B(x),D(x)\colon x \in \{0,\dots,N\})$ we shall define, for each $x \in \{0,\dots,N\}$, a process $(\Phi(x,t)\colon t \in [0,\infty))$ which is a copy of the b-d process with the given transition rates, and with initial value $x$ at time $0$.
Let $\Phi(x,0)=x$ and define $(t_i(x))_{i \le I(x)}$ and $(\Phi(x,t_i(x)))_{i \le I(x)}$ recursively by
\begin{align*}
&t_0(x)=0, \ \ t_{i+1}(x)= \inf \{t>t_i(x) \colon t \in B(\Phi(x,t_i(x))) \cup D(\Phi(x,t_i(x))), \\
&\Phi(x,t_{i+1}(x)) = \begin{cases} \Phi(x,t_i(x))+1 & \ \text{if} \ \ t_{i+1}(x) \in B(\Phi(x,t_i(x))) \\
\Phi(x,t_i(x))-1 & \ \text{if} \ \ t_{i+1}(x) \in D(\Phi(x,t_i(x)))\end{cases} \ \ \text{and} \ \ \\
& I(x) = \inf\{i \colon b(\Phi(x,t_i(x)))+d(\Phi(x,t_i(x)))=0\}.
\end{align*}
Then, for $i<I(x)$ let $\Phi(x,t) = \Phi(x,t_i(x))$ for $t \in [t_i(x),t_{i+1}(x))$, and if $I(x)<\infty$ let $\Phi(x,t)=\Phi(x,t_{I(x)}(x))$ for $t \in [t_{I(x)}(x),\infty)$. This defines the process on $[0,\zeta(x))$ where
$$\zeta(x):=\begin{cases}
\lim_{i\to\infty} t_i(x)) & \text{if} \ \ I(x)=\infty \\
\infty & \text{if} \ \ I(x)<\infty.\end{cases}$$
To verify that $\zeta(x)=\infty$ when $I(x)=\infty$, note that $\{t_i(x)\colon i \ge 1\} \subset U:= \bigcup_{y \le N}B(y) \cup D(y)$. Since $U$ is a Poisson point process with finite total intensity $\sum_{y \le N}b(y)+d(y)$, with probability 1, $U_N \cap [0,T]$ is finite for each fixed $T>0$, which implies that $\lim_{i\to\infty}t_i$ must be infinite.\\

Having constructed the coupling, we verify the following desirable properties.

\begin{lemma}\label{lem:nat-coup}
Let $\Phi(x,t)$ be as defined above. If $\Phi(x,s)=\Phi(y,s)$ then $\Phi(x,t)=\Phi(y,t)$ for all $t\ge s$, and if $x \le y$ then $\Phi(x,t) \le \Phi(y,t)$ for all $t\ge 0$.
\end{lemma}

\begin{proof}
Suppose $\Phi(x,s)=\Phi(y,s)$ that we denote by $z$. Then for some $i,j$, $s\ge \max(t_i(x),t_j(y))$ and $\Phi(x,t_i(x))=\Phi(y,t_j(y))=z$. If $b(z) + d(z)=0$ then $I(x)=i$ and $I(y)=j$, and $\Phi(x,t) = \Phi(y,t) = z$ for all $t\ge s$. Otherwise, $I(x)>i$, $I(y)>j$ and $\min(t_{i+1}(x),t_{j+1}(y))>s$. We then have $t_{i+1}(x)=t_{j+1}(y)$ since both are equal to $\inf\{t>s \colon t \in B(z) \cup D(z)\}$, and the construction ensures that $\Phi(x,t)=\Phi(y,t)$ for all $t\ge s$.\\

For the second statement, first note that since $(B(x)\cup D(x)\colon x \in \{0,\dots,N\})$ are independent, from standard properties of Poisson point processes it follows that if $x \ne y$ then $B(x)\cup D(x)$ and $B(y) \cup D(y)$ are disjoint. It follows from the construction that if $\Phi(x,t^-) \ne \Phi(y,t^-)$ then either $\Phi(x,t)=\Phi(x,t^-)$ or $\Phi(y,t) = \Phi(y,t^-)$, i.e., both cannot jump simultaneously. Since $\Phi(x,0)-\Phi(y,0)$ is integer-valued and since
\itemgo
\item for any $x$, $t\mapsto \Phi(x,t)$ is piecewise constant,
\item for any $x$ and fixed $T>0$, the set $\{t \in [0,T]\colon \Phi(x,t) \ne \Phi(x,t^-)\}$ is a.s.~finite, and
\item for any $x,t$, $|\Phi(x,t)-\Phi(x,t^-)| \in \{-1,0,1\}$,
\itemend 
if $x \le y$ and $\Phi(x,t)>\Phi(y,t)$ then since $\Phi(x,0)=x$ and $\Phi(y,0)=y$, for some $s<t$, $\Phi(x,s)=\Phi(y,s)$. Using the first statement, we then have $\Phi(x,t)=\Phi(y,t)$, contradicting $\Phi(x,t)>\Phi(y,t)$.
\end{proof}

\subsection{Natural coupling for two or more ordered processes}

Next we describe a similar construction for multiple b-d processes with different transition rates, on a common state space $\{0,\dots,N\}$. Index the processes $1,\dots,k$ and let $b_i(x),d_i(x)$ denote the transition rates. We will consider only the case in which rates are ordered such that if $i<j$ then $b_i(x) \le b_j(x)$ and $d_i(x) \ge d_j(x)$ for each $x$. Let $\beta_1(x) = b_1(x)$ and $\dlt_k(x)=d_k(x)$, then for $1 < i \le k$ let $\beta_i(x) = b_i(x) - b_{i-1}(x)$ and for $1\le i< k$ let $\dlt_i(x) = d_i(x)-d_{i+1}(x)$. Define Poisson point processes $B_i(x),D_i(x)$ on $\R_+$ with respective rates $\beta_i(x),\dlt_i(x)$. Then, for each $i\in \{1,\dots,k\}$ define $(\Phi_i(x,t)\colon t\in [0,\infty))$ in the same way as $\Phi$ from the previous subsection, but using $\bigcup_{j=1}^i B_j(x)$ and $\bigcup_{j=i}^k D_j(x)$, and $\sum_{j=1}^i b_j(x)$ and $\sum_{j=i}^k d_j(x)$, in place of $B(x)$ and $D(x)$, and $b(x)$ and $d(x)$, respectively. Then $\Phi_i(x,t)$ is a copy of the b-d process with transition rates $b_i,d_i$ and initial value $x$.\\

For fixed $i$, the above construction amounts to the same as in the previous subsection, so Lemma \ref{lem:nat-coup} applies to $\Phi_i$. Another useful property is summarized in the following result.

\begin{lemma}\label{lem:nat-coup2}
Let $\Phi_i(x,t)$, $i=1,\dots,k$, $x \in \{0,\dots,N\}$, $t\in[0,\infty)$ be as defined above. \\If $x\le y$ and $i\le j$ then $\Phi_i(x,t) \le \Phi_j(y,t)$ for all $t\ge 0$.
\end{lemma}

\begin{proof}
For the same reason as in the proof of Lemma \ref{lem:nat-coup}, if $x \le y$ and $\Phi_i(x,t) > \Phi_j(y,t)$ then for some $s<t$, $\Phi_i(x,s)=\Phi_j(y,s)$. Moreover, $s$ can be chosen so that in addition, for some $u\in (s,t]$, $\Phi_i(x,u^-)=\Phi_j(y,u^-)=:z$ and either
\itemgo
\item $\Phi_i(x,u)=z+1$ and $\Phi_j(y,u)=z$ or
\item $\Phi_i(x,u)=z$ and $\Phi_j(y,u)=z-1$.
\itemend
The first case implies that $u \in \bigcup_{m=1}^i B_m(z)$ and $u \notin \bigcup_{m=1}^j B_m(z)$ which is impossible since $i \le j$. Similarly, the second case implies $u\in \bigcup_{m=j}^k D_m(z)$ and $u\notin \bigcup_{m=i}^k D_m(z)$ which again is impossible.
\end{proof}

\section{Linear birth-and-death process}\label{sec:bbp}
The linear birth-and-death process $\Z_+$ with parameter $r$ is defined by the transitions
$$Z \to \begin{cases}Z+1 & \text{at rate} \quad r Z \\
         Z-1 & \text{at rate} \quad Z.
        \end{cases}$$
$Z$ can be thought of as the number of cells in a process in which each cell independently dies at rate $1$ and splits into two cells at rate $r$. For this process we are interested in how the extinction time $\tau=\inf\{t:Z(t)=0\}$ scales with $r$ and $Z_0$, in cases where extinction is asymptotically certain, i.e., $\P(\tau<\infty)\to 1$. The results are by now routine, but as I could not find a reference that states them all together, and since they are easy to prove, I have included the proof.

\begin{theorem}\label{thm:bbp}
Let $Z_n$ denote a sequence of copies of the above process, with respective initial condition and parameter $Z_n(0),r_n$. Let $\tau_n=\inf\{t\colon Z_n(t)=0\}$, $\gamma_n = 1-r_n$ and $a_n=\gma_n Z_n(0)$. Suppose that $Z_n(0) \to Z_\infty(0) \in \N \cup \{\infty\}$, $r_n\to r_\infty \le 1$ and $a_n\to a_\infty \in [0,\infty]$. Let $\gma_\infty=\lim_{n\to\infty}\gma_n$.

\begin{enumerate}
\item Suppose $Z_\infty(0)<\infty$ and fix any value of $t\ge 0$.
\begin{enumerate}[noitemsep]
\item If $r_\infty=1$ then $\P(\tau_n \le t) \to (1+1/t)^{-Z_\infty(0)}$.
\item If $r_\infty<1$ then $\P(\tau_n \le t) \to (1+\gamma_\infty/(e^{\gamma_\infty t}-1))^{-Z_\infty(0)}$.
\end{enumerate}
\item Suppose $Z_\infty(0)=\infty$.
\begin{enumerate}
 \item If $a_\infty \in [0,\infty)$ then $\tau_n/Z_n(0) \stackrel{(d)}{\to} H_{a_\infty}$ with
 $$\P(H_{a_\infty}\le w) = \begin{cases} e^{-1/w} & \text{if} \quad a_\infty=0, \\
                e^{-a_\infty/(e^{a_\infty w}-1)} & \text{if} \quad a_\infty>0.
               \end{cases}$$
 \item If $a_\infty=\infty$ and $a_n=b_n+o(b_n/\log b_n)$ for some sequence $(b_n)$ such that $b_n\to\infty$ then $\gamma_n \, \tau_n - \log b_n \stackrel{(d)}{\to} G$, where $G$ has $\P(G\le w)=e^{-e^{-w}}$ for $w\in \R$.
\end{enumerate}
\end{enumerate}
\end{theorem}

\begin{proof}[Proof of Theorem \ref{thm:bbp}]
For compactness of notation we'll suppress the dependence on $n$ in all variables and write $Z_t$ instead of $Z(t)$. In Chapter III of \cite{athreyabp} a more general model is considered in which each particle independently dies at some rate $\alpha$ (they call it $a$ but that notation's already in use for us) and is replaced with $k$ particles with probability $p_k$; the present model corresponds to $\alpha =1+r$ and $p_0=1/(1+r)$, $p_2=r/(1+r)$. Using the Kolmogorov backward equation of the process (equation (4) of III.2), for $\rho(t)=\P(Z_t=0 \mid Z_0=1)$ they obtain the differential equation (equation (2) of III.4)
$$\rho' = \alpha (-\rho + \sum_{k=0}^{\infty}p_k \rho^k).$$
In our case this gives $\rho' = -(1+r)\rho + 1 + r\rho^2$, which can be conveniently factored to give
\begin{align}\label{eq:bbp-rho}
\rho' = (1-r\rho)(1-\rho), \quad \rho_0=0.
\end{align}
Separating variables,
$$\frac{d\rho}{(1-r\rho)(1-\rho)} = dt.$$
Letting $\gamma=1-r$ we note that
$$\frac{1}{1-\rho} - \frac{r}{1-r\rho} = \frac{\gamma}{(1-r\rho)(1-\rho)}.$$
Solving the DE and using $\rho(0)=0$,
$$\gamma \, t = \log\left(\frac{1-r\rho(t)}{1-\rho(t)}\right).$$
If $\gamma \ne 0$, solving for $\rho(t)$ then gives
\begin{align}\label{eq:rhot}
\rho(t)^{-1} = \frac{e^{\gamma t}-r}{e^{\gamma t}-1} = 1 + \gamma / (e^{\gamma t}-1).
\end{align}
Part 1 of Theorem \ref{thm:bbp} then follows easily; the case $r_\infty=1$ follows by taking the limit of $\rho(t)$ as $\gamma \to 0$, or it could be computed directly by solving the ODE in the special case $r=1$.\\

We now tackle the second part of Theorem \ref{thm:bbp}, using \eqref{eq:rhot} to determine $\P(\tau \le t \mid Z_0) = \rho(t)^{Z_0}$ when $Z_0 \to \infty$, under various limits of $\gamma Z_0$. As $Z_0\to\infty$, $\rho(t)^{Z_0}\to e^{-c}$ if $\gamma/(e^{\gamma t}-1)=c/Z_0$. Solving this gives
$$\gamma \, t = \log (1 + \gamma Z_0/c).$$
Recalling that $a=\gamma Z_0$, there are two cases.\\

\noindent\textit{Case 1: $a \to a_\infty \in [0,\infty)$.} In this case, if $\rho(t)^{Z_0} \to e^{-c}$ then $t = (Z_0/a)(\log(1 + a/c))$.\\
Setting $w=\frac{1}{a}\log(1+a/c)$ gives $c = a/(e^{aw}-1)$. If $a \to 0$ then $a/(e^{aw}-1) \to 1/w$ and
$$\P(\tau \le w Z_0) \to e^{-1/w},$$
while if $a \to a_\infty>0$ then
$$\P(\tau \le w Z_0) \to e^{-a_\infty/(e^{a_\infty w}-1)}.$$

\noindent\textit{Case 2: $a\to\infty$.} Taking $\gamma \, t = \log a - \log c$ gives $\rho(t)^{Z_0} \to e^{-c}$ as $Z_0\to\infty$.\\
Letting $w=-\log c$ so that $c=e^{-w}$,
$$\P(\tau \le \gamma^{-1}(\log \gamma Z_0 + w) ) \to e^{-e^{-w}} \quad \text{as} \quad Z_0 \to \infty.$$
In other words, $\gamma \tau - \log \gamma Z_0$ has a standard Gumbel distribution.\\
The following short lemma concludes the proof of case 2, and thus the proof of Theorem \ref{thm:bbp}.
\end{proof}

\begin{lemma}\label{lem:bpp}
Let $Z_n$ be a sequence of copies of the linear birth-and-death process with initial value $Z_n(0)\to\infty$ and parameter $r_n=1-a_n/Z_n(0)$ with $a_n\to\infty$. Let $(b_n)$ be a sequence with $b_n\to\infty$ and let $\tau_n=\inf\{t:Z_n(t)=0\}$. Then
$$\lim_{n\to\infty}\P(\tau_n \le \frac{Z_n(0)}{b_n}(\log b_n + w)) \to e^{-e^{-w}} \,\, \text{for all} \,\, w \in \R \quad \,\, \text{iff} \,\, \quad a_n-b_n = o(b_n/\log b_n).$$
\end{lemma}

\begin{proof}
As before, suppress the dependence on $n$. Fix $w \in \R$ and let $v$ be such that
$$\frac{1}{b}( \log b + w) = \frac{1}{a}(\log a + v).$$
Since $\log a + v = \displaystyle\frac{a}{b}(\log b + u)$, subtracting $\log a+w$ from both sides gives
\begin{align}\label{eq:bpp1}
v-w=\log(b/a) + \frac{a-b}{b}\log b + \frac{a-b}{b}w.
\end{align}
If $a-b=o(b/\log b)$ the second term is $o(1)$. Since $b\to\infty$, $a = b + o(b)$ so $\log(b/a)=o(1)$, $\displaystyle\frac{a-b}{b}w=o(1)$ and so $v-w=o(1)$. 
On the other hand, suppose that $v-w=o(1)$ for every $w\in \R$. Setting $w=0$ gives
\begin{align}\label{eq:bpp2}
\log(b/a) + \frac{a-b}{b}\log b = o(1). 
\end{align}
Then, setting $w=1$ and using \eqref{eq:bpp1} and \eqref{eq:bpp2} gives $\displaystyle\frac{a-b}{b}=o(1)$. 
This in turn implies that $\log(b/a)=o(1)$. Using \eqref{eq:bpp2} once more gives $\displaystyle\frac{a-b}{b}\log b=o(1)$ as required.
\end{proof}

\section{Approximation by linear birth-and-death processes}\label{sec:lbdp-approx}

In this section we use the method of Brightwell, House and Luczak \cite{bright-luz}, namely, approximation by linear birth-and-death processes, to prove Theorem \ref{thm:disc}, \ref{thm:lin-diff} and the ``final stage'' part of Theorem \ref{thm:sub}. As in the previous section, we shall suppress dependence on $n$ and write $X_t,Z_t$ etc.\\

For $M > X_0>0$ (both depending on $n$) to be determined, let $r'=r(1-M/n)$ and let $Z,Z'$ be the linear birth and death processes with respective parameters $r,r'$ and common initial value $X_0$. Let $\tau_M = \inf\{t:X_t \in \{0,M\}\}$. Then, applying the natural coupling of Section \ref{sec:coup} to $Z',X,Z$ labelled $1,2,3$ respectively and using Lemma \ref{lem:nat-coup2} gives $Z'_t \le X_t \le Z_t$ for $t\le \tau_M$. 
Let $\tau_Z = \inf\{t:Z_t=0\}$ and $\tau_{Z'}= \inf\{t:Z_t'=0\}$.
The goal is to take $M$
\begin{enumerate}[noitemsep]
\item large enough that $\P(X_{\tau_M}=M) = o(1)$ and
\item small enough that
\begin{enumerate}[noitemsep]
\item $\P(X_t=Z_t$ for $t\le \tau_M)=1-o(1)$ if $X_0$ is fixed,
\item $\tau_Z$ and $\tau_{Z'}$ have a common rescaled limit if $X_0\to\infty$.
\end{enumerate}
\end{enumerate}
We begin with a general observation. Let $p_-(X),p_+(X)$ denote the probability that $X\to X-1$, respectively, $X\to X+1$ in the embedded discrete time Markov chain, or jump chain, of $X_t$. Then $p_-(X)=1/(1+r(1-X/n))$ and $p_+(X)=r(1-X/n)/(1+r(1-X/n))$ and in particular, $p_-(X)/p_+(X) = 1/(r(1-X/n))\ge 1/r$ for all $X$, so it follows easily that $r^{-X_t}$ is a supermartingale. Recall that $\tau = \inf\{t \colon X_t=0\}$. Using optional stopping, which is applicable since $X$ is bounded and $\P(\tau<\infty)=1$, it is easy to show that
\begin{align}\label{eq:X-sm}
\P(\tau \ne \tau_M \mid X_0) = \P(X_{\tau_M} = M \mid X_0) \le \frac{r^{-X_0} - 1}{r^{-M}-1}.
\end{align}
If $r < 1$ then using the estimate $a/b \le (a+c)/(b+c)$ that holds for $0<a\le b$ and $c>0$, we find that if $X_0,M>0$ then
\begin{align}\label{eq:taum-1}
\frac{r^{-X_0}-1}{r^{-M}-1} \le \frac{r^{-X_0}}{r^{-M}} = (1-\gamma)^{M-X_0} \to 0 \quad \text{if} \quad \gamma \, (M-X_0)\to\infty.
\end{align}
Recall $\gamma = 1-r$. If $\gamma \to 0$ and if, a fortiori, $\gamma M \to 0$ then
\begin{align}\label{eq:taum-2}
\frac{r^{-X_0}-1}{r^{-M}-1} = \frac{(1-\gamma)^{-X_0}-1}{(1-\gma)^{-M}-1} \sim \frac{X_0}{M} \to 0 \quad \text{if} \quad X_0=o(M).
\end{align}

We proceed by cases. The first case is Theorem \ref{thm:disc}.

\begin{proof}[Proof of Theorem \ref{thm:disc}]
In this setting, $\gma\to\gma_\infty\ge 0$ and we can assume $X_0$ is constant. For any $t \ge 0$,
\begin{align*}
& |\P(\tau \le t \mid X_0) - \P(\tau_Z \le t \mid X_0)| \\
& \le \P(\tau \ne \tau_M \mid X_0) + \P(\tau = \tau_M \ \ \text{and} \ \ \tau_M \wedge t \ne \tau_Z \wedge t \mid X_0).\end{align*}
Theorem \ref{thm:disc} then follows from Theorem \ref{thm:bbp} if $M$ can be chosen so that for any $t\ge 0$, the above $\to 0$ as $n\to\infty$.\\
We let $p_1 = \P(\tau \ne \tau_M\mid X_0)$ and $p_2 = \P(\tau = \tau_M \ \ \text{and} \ \ \tau_M \wedge t \ne \tau_Z \wedge t \mid X_0)$.\\
We first find $M$ such that $p_1 \to 0$. If $\gma_\infty > 0$, using \eqref{eq:taum-1} we find that $p_1 \to 0$ if $M\to\infty$. \\
If $\gma_\infty = 0$, then using \eqref{eq:taum-2} we find that $p_1 \to 0$ if $M \to \infty$ and $\gma M \to 0$.\\
We next find $M$ such that $p_2\to 0$. If $\tau=\tau_M$ then since $X,Z$ are both absorbed at $0$,
$$p_2 \ \le \P(X_s \ne Z_s \ \ \text{for some} \ \ s \le t \wedge \tau_M \mid X_0).$$
If $X,Z \in [0,M]$ their transition rates differ by at most $(r-r')M = rM^2/n$, so using the exponential distribution and noting $e^{-x} \ge 1-x$ we find that
$$\P(X_s \ne Z_s \ \ \text{for some} \ \ s \le t \wedge \tau_M \mid X_0) \le 1 - e^{-rM^2t/n} \le rM^2t/n.$$
Since $r$ is bounded in $n$, the right-hand side $\to 0$ for fixed $t$ provided $M=o(\sqrt{n})$.\\
Thus both $p_1,p_2 \to 0$ if we take $M\to\infty$ sufficiently slowly.\\
\end{proof}

Next we prove Theorem \ref{thm:lin-diff}.
\begin{proof}[Proof of Theorem \ref{thm:lin-diff}]
We first re-interpret somewhat the conditions of the theorem.
\enumalph
\item \textit{first option:} $c_\infty=-\infty$ and $a_\infty<\infty$. Since $c_\infty=-\infty$, $1/\sqrt{n}=o(|\dlt_n|)$ so $a_\infty = \lim_{n\to\infty} |\dlt_n| X_n(0)$. Since $a_\infty<\infty$, $X_n(0)=O(1/|\dlt_n|) = o(\sqrt{n})$. In shorthand: $\gma X_0 \to a_\infty\in [0,\infty)$ and $X_0=o(\sqrt{n})$.\\
\item \textit{second option:} $c_\infty>-\infty$ and $a_\infty=0$. If $c_\infty \in \R$ then $a_\infty=0$ is equivalent to $X_n(0)/\sqrt{n} \to 0$, and implies $|\dlt_n| X_n(0) \to 0$. If $c_\infty=\infty$ then in the same way as above, $a_\infty =\lim_{n\to\infty} |\dlt_n| X_n(0)$ and $X_n(0)=o(\sqrt{n})$. So in this case as well, $\gma X_0 \to a_\infty \in [0,\infty)$ and $X_0=o(\sqrt{n})$.
\enumend
Thus the theorem is proved, if it can be proved when $\gma X_0 \to a_\infty \in [0,\infty)$ and $X_0=o(\sqrt{n})$. First note that with $p_1$ as in the proof of Theorem \ref{thm:disc},
\begin{align}\label{eq:tau-comp}
|\P(\tau_Z' \le \tau \le \tau_Z \mid X_0)| \ge 1 - p_1.
\end{align}

Since $X_0\to\infty$ and $\gma X_0 \to a_\infty<\infty$, $\gma \to 0$. For $W \in \{X,Z,Z'\}$, and writing $\tau$ as $\tau_X$, let $F_W(t) = \P(\tau_W/X_0 \le t)$. From \eqref{eq:tau-comp},
\begin{align}\label{eq:taus-ord}
F_Z(t) - p_1 \le F_X(t) \le F_{Z'}(t)+p_1.
\end{align}
By Theorem \ref{thm:bbp}, if $\gma X_0, \, \gma' X_0 \to a_\infty$ then for all $t\ge 0$, $F_Z(t),F_{Z'}(t) \to \P(H_{a_\infty} \le t)$.\\
Since $\gma \, X_0 \to a_\infty$ by assumption, it is enough to find $M$ such that i) $p_1\to 0$ and ii) $(\gma' - \gma)\, X_0 \to 0$.\\

\noindent For i), using \eqref{eq:taum-1} and \eqref{eq:taum-2} we need either $\gma\, M \to \infty$, or $X_0/M \to 0$ and $\gma\, M \to 0$.\\
For ii) we compute $(\gma'-\gma)X_0 = (r-r')X_0 = rX_0M/n$, so it is enough that $X_0M/n \to 0$.\\

\noindent\textit{Subcase 1: $a_\infty=0$.} Define $\beta = |\gma| \vee 1/\sqrt{n}$ and let $M = \beta^{-1}(\beta X_0)^{1/2}$. Since $\gma X_0 \to 0$ and $X_0=o(\sqrt{n})$, $\beta X_0 = \max(|\gma| X_0,\,X_0/\sqrt{n}) \to 0$, so $X_0/M = (\beta X_0)^{1/2} \to 0$ and $\gma \, M \le \beta \, M = (\beta X_0)^{1/2} \to 0$, satisfying i). Since $\beta \ge 1/\sqrt{n}$, $\beta^{-1} \le \sqrt{n}$ so $M=o(\sqrt{n})$ and $X_0M/n \to 0$, satisfying ii).\\

\noindent\textit{Subcase 2: $a_\infty>0$.} Since $X_0=o(\sqrt{n})$ and $\gma X_0$ has a positive limit, $\sqrt{n} \gma \to \infty$. Let $M = \gma^{-1}(\sqrt{n}\gamma)^{1/2}$. Then $\gma M = (\sqrt{n}\gamma)^{1/2}\to\infty$, satisfying i). Since $X_0=o(\sqrt{n})$ and $M = (\sqrt{n}/\gma)^{1/2} = \sqrt{n}/(\sqrt{n}\gma)^{1/2} = o(\sqrt{n})$, $X_0M/n \to 0$, satisfying ii).
\end{proof}

Now, we prove Theorem \ref{thm:sub} in the ``final stage'', i.e., when $X_n(0)\le n\gma_n |c_n|^{-\ep}$ for some $\ep>0$. The proof is broadly the same as  in \cite{bright-luz}, although here the result is applicable to a somewhat larger set of initial values.
\begin{proof}[Proof of Theorem \ref{thm:sub} in the case where $X_n(0) \le n\gma_n |c_n|^{-\ep}$ for some $\ep>0$]

Using our abbreviated notation, we shall prove the result under the slightly less restrictive condition $X_0\log(\gma X_0) = o(\gma n)$. To see that this is less restrictive, suppose $X_0 \le \gma n|c|^{-\ep}$ so that $X_0|c|^\ep \le \gma n$. By assumption in Theorem \ref{thm:sub}, $|c|\to\infty$ and $a=\gma X_0 \to \infty$. Using $|c|\to\infty$, $\log|c|=o(|c|^\ep)$ so $X_0\log|c| = o(X_0|c|^\ep)= o(\gma n)$. Using $a\to\infty$ and $X_0=o(\gma n)$, $\log(\gma X_0) = o(\log(\gma^2 n)$. Since $|c|=\gma\sqrt{n}$, $\log(\gma X_0)=o(\log(|c|)$. Therefore $X_0 \log (\gma X_0) = o(X_0\log|c|) = o(\gma n)$ as desired.\\

If $X_0\log(\gma X_0)=o(\gma n)$ then since $\gma X_0 \to \infty$, $X_0=o(\gma n)$. As noted in the statement of Theorem \ref{thm:sub}, in this case $g = \log(\gma X_0)+o(1)$. So, we want to show that $\gma \tau - \log(\gma X_0)$ converges to standard Gumbel.\\

Let $a =\gma X_0$ and for $W \in \{X,Z,Z'\}$ let $F_W(t) = \P(\gma \, \tau_W - \log a \le t)$. Then, \eqref{eq:taus-ord} also holds for this choice of $F_W(t)$. According to Theorem \ref{thm:bbp}, for all $t\ge 0$ $F_Z(t) \to \P(G\le t)$ and if $(\gma'-\gma)X_0 = o(a/\log a)$ then for all $t\ge 0$, $F_{Z'}(t) \to \P(G \le t)$. Following again the logic of the proof of Theorem \ref{thm:lin-diff}, it is enough to find $M$ such that i) $p_1\to 0$ and ii) $(\gma' - \gma)\, X_0 = o(a/\log a)$.\\

\noindent For i) we need $\gma \, (M-X_0)\to\infty$, for which $M = 2X_0$ suffices (and which, assuming only $\gma X_0\to\infty$, cannot be improved beyond a factor of $2$). For ii) we need $X_0M/n = o(\gma X_0/\log(\gma X_0))$. Using $M=2X_0$, the condition becomes $X_0 = o(\gma n/\log(\gma X_0))$, which is the condition given.

\end{proof}

\section{Subthreshold cutoff}\label{sec:sub}

In this section we prove Theorem \ref{thm:sub}, in the case where $X_n(0) \ge n\gma_n |c_n|^{-\ep}$ for any $\ep>0$. This corresponds to the intermediate and initial stages as described in \cite{bright-luz}; the final stage, where $X_n(0)\le n\gma_n |c_n|^{-\ep}$, is proved just above in the previous section. The proof for the initial stage is about identical to their proof. For the intermediate stage, the setup of the problem follows their approach, then I use a different method to handle the error term between the process and its deterministic approximation (denoted $e_N$ in their paper and $W$ here). In their paper, an auxiliary result (their Lemma 3.2) is used to show the maximum of the error term is (deterministically) bounded by $2$ times the maximum of the compensator of $X$, then the latter is estimated using the corresponding exponential martingale. Here, we compute the drift and diffusivity of the error term, then after making a time change, use a so-called drift barrier estimate, Lemma \ref{lem:driftbar} which is proved in \cite{SIS-SDE}, to show the error term remains small on the desired time interval. Neither approach seems to be strictly simpler or more efficient than the other.
\begin{proof}[Proof of Theorem \ref{thm:sub} in the case where $X_n(0) \ge n\gma_n |c_n|^{-\ep}$]
As before, suppress $n$ from the notation and write $X_t,Y_t$ etc when it is convenient. For this proof only, let $c=\sqrt{n}\gma$ which amounts to a change of sign; this saves us from always writing $|c|$. Fix a small $\ep>0$ to be determined and let $Y_t=X(t/\gma)/(\gma n)$. Then $\gma X_0=c^2Y_0$. Let's re-write $g_n(X_n(0))$ from the statement of Theorem \ref{thm:sub} in terms of $Y$. Abusing notation a bit, we have
$$g(Y_0) = 2\log c - \log(r + 1/Y_0).$$
Let $t_\star = \inf\{t \colon Y_t \le c^{-\ep}\} = \inf\{t\colon X(t/\gma) \le n\gma  c^{-\ep}\}$. The final stage corresponds to $t_\star=0$. Since $X$ jumps by $\pm 1$, $Y$ jumps by $\pm 1/\gma n = \pm 1/c\sqrt{n} = o(1/c)$, so
$$g(Y_0)-g(Y(t_\star)) = -\log(r+1/Y_0) + \log(c^{-\ep} \pm o(1/c)) = -\log(r+1/Y_0) - \ep \log c + o(1).$$
Thus to prove the result it remains to show that
\begin{align}\label{eq:t-st-frml}
t_\star - (-\log(r+1/Y_0)-\ep \log c) \to 0.
\end{align}
\noindent\textbf{Intermediate stage.} Suppose $c^{-\ep} \le Y_0 \le c^{\ep}$. We will use the notation for drift and diffusivity discussed in the Appendix. Using the transition rates from \eqref{eq:logis-model} in \eqref{eq:mc-dd} and noting $\gma=-(r-1)$, for $X_t$ we compute
\begin{align*}
\mu(X) &= rX(1-X/n)-X = -\gma X - rX^2/n, \\
\sigma^2(X) &=  (1+r(1-X/n))X \le (1+r)X.\end{align*}
From \eqref{eq:mc-dd} we infer that if $Y(t)=\alpha X(\beta t)$ then $\mu(Y) = \alpha\beta \mu(X)$ and $\sigma^2(Y) = \alpha^2\beta \sigma^2(X)$. So,
$$\mu(Y) = \frac{1}{\gma^2 n}\mu(X) = -\frac{1}{\gma n}X - r\frac{1}{(\gma n)^2}X^2 = - Y - rY^2$$
and using $\gma^2n=c^2$,
$$\sigma^2(Y) = \frac{1}{\gma^3n^2}\sigma^2(X) \le \frac{1}{\gma c^2}(1+r)X = \frac{1}{c^2}(1+r)Y.$$
As in \cite{bright-luz}, we'll directly estimate the distance between $Y$ and its deterministic approximation. Let $y(t)$ denote the solution to the initial value problem
$$y' = -y - ry^2, \quad y(0)=Y_0$$
and let $t^{\pm} = \inf\{t : y(t) = (1 \pm c^{-\ep})c^{-\ep} \}$. Solving by separation of variables,
\begin{align*}
t^{\pm} &= \log \frac{Y_0}{1+rY_0} - \log\frac{(1 \pm c^{-\ep})c^{-\ep}}{1+r(1 \pm c^{-\ep})c^{-\ep}} \\
&= -\log(r+1/Y_0) - \ep \log c + o(1).
\end{align*}
Thus, $t^\pm$ both have the desired limit for $t_\star$ as in \eqref{eq:t-st-frml}. The result will be proved if we show that $t^- \le t_\star \le t^+$ with probability $1-o(1)$. 
Define the error process $W_t=Y_t-y(t)$. If $\sup_{t \le t^-}|W_t| \le c^{-2\ep}$ then $t^- \le t_\star \le t^+$, so we will show the former has probability $1-o(1)$. We first compute the drift and diffusivity of $W$. Factoring the difference of squares,
$$\mu(W) = \mu(Y) - y' = -Y-rY^2+y-ry^2 = -W(1+r(Y+y)).$$
Since $(y(t))$ is continuous and has finite variation it has zero quadratic variation, so
$$\sigma^2(W) = \sigma^2(Y) \le \frac{1}{\gma^2}(1+r)Y.$$
Since $r,y,Y>0$,
\begin{equation}\label{eq:msratio}
\sgn(\mu(W)) = -\sgn(W) \quad \text{and} \quad \frac{|\mu(W)|}{\sigma^2(W)} \ge \frac{c^2r}{1+r}|W|.
\end{equation}
Next we change time from $t$ to $s$ such that $\mu_s(W) = -W_s$. To do so let
$$s(t) = \int_0^t (1 + r(Y_r+y(r))dr.$$
Since $\mu(W)$ and $\sigma^2(W)$ are both scaled by $dt/ds \in (0,1]$, \eqref{eq:msratio} remains valid after the time change, so
$$\mu_s(W) = -W_s \quad \text{and} \quad \sigma^2_s(W) = O(1/c^2).$$
Also, if $|W_s| \le c^{-2\ep}$ then $Y_s \le y(s) + c^{-2\ep} \le y(0) + c^{-2\ep} \le c^\ep+c^{-2\ep}$, so $s'(t) \le 1 + r(2c^{\ep} + c^{-2\ep}) = O(c^{\ep})$. Since $t^- = O(\log c)$, if $\sup_{s \le s(t^-)}|W_t| \le c^{-2\ep}$ then $s(t^-) = O(c^{\ep}\log c)=o(c)$ so it's enough to show that
$$\sup_{s \le c}|W_s| \le c^{-2\ep}.$$
We first give an upper bound on $W$. Next we use Lemma \ref{lem:driftbar} which is proved in \cite{SIS-SDE}. In the notation of Lemma \ref{lem:driftbar}, let $x = c^{-2\ep}/2$, $X = W - x$, $\Delta_\infty(X) = 1/\gma n$, $\mu_\star = x$, $\sigma^2_\star=C/c^2$ for some constant $C$, $C_\mu = 2x$ and since $\Delta_\infty(X)\mu_\star/\sigma_\star^2 = c^2 x/C\gma n = \gma^2n x/C\gma n \le \gma/C=O(1)$, take $C_\Delta$ to be some large enough constant. Then $\Gamma = \exp(x^2/16\sigma^2_\star) = \exp(c^{2-2\ep}/16 C) \ge c$ for large enough $c$, and since $c\to\infty$ by assumption, with probability $1-o(1)$, $X_s \le x$ or equivalently $W_s \le 2x = c^{-2\ep}$ for all $s \le c$. A matching bound for $-W$  is proved in the same way.\\

\noindent\textbf{Initial stage.} Suppose $Y_0 \ge c^{\ep}$ and let $t^\star = \inf\{t:Y_t \le c^{\ep}\}$. If $Y_0 \ge c^\ep$ then $Y_0 \to \infty$ so $g(Y_0)=2\log c - \log r+o(1)$. Since the result is proved for $Y_0\le c^{\ep}$ it is enough to show that $t^\star = o(1)$. From the drift of $Y$ and Jensen's inequality applied to $\E[Y_t^2]$ we find that $u(t) = \E[Y_t]$ satisfies the differential inequality
$$u' \le -u - ru^2 \le -ru^2.$$
Integrating the inequality,
$$u(t) \le ( u(0)^{-1} + rt)^{-1} \le 1/rt.$$
By Markov's inequality, $Y_t \le c^{\ep}$ or equivalently $t^\star \le t$ with high probability if $u(t) = o(c^{\ep})$, which is the case if $t=c^{-\ep/2}$. Since $c^{-\ep/2}=o(1)$, the result is proved.
\end{proof}

\section{Non-linear diffusive}\label{sec:crit}

\begin{proof}[Proof of Theorem \ref{thm:nonlin-diff}]
The first part of the proof is to show convergence to the limiting diffusion, which is done in \cite{crit-scale} by convergence of generators - here we do it using a general result, Lemma \ref{lem:limproc}, that requires convergence of drift and diffusion coefficients and vanishing jump size. The second part of the proof is to show the extinction time is short when the initial value is small, relative to the space and time scale of the limiting diffusion.\\

For this section let $Y_n(t)=X_n(\sqrt{n}t)/\sqrt{n}$ and let $c_n=\sqrt{n}(r_n-1)$ as in the statement of the theorem. There are two cases to cover: $Y_n(0) \to y \in (0,\infty)$ and $Y_n(0) \to \infty$.\\

\noindent\textbf{Case 1: $Y_n(0) \to y\in (0,\infty)$.} First we use Lemma \ref{lem:limproc} in the Appendix, which is a result from \cite{EthierKurtz}, to show that if $Y_n(0) \to y$ then for all but countably many $R$, $Y_n(\cdot\wedge \tau_n^R) \cd Y(\cdot\wedge \tau^R)$, where $\tau_n^R$ and $\tau^R$ are the exit times of $Y_n$ and $Y$ from $(1/R,R)$ as described in Lemma \ref{lem:limproc}. Recall that $Y_n(t)$ has transitions
$$Y_n \to \begin{cases}Y_n+1/\sqrt{n} & \text{at rate} \quad nY + \sqrt{n}Y(c_n-Y) - c_nY, \\
         Y_n-1/\sqrt{n} & \text{at rate} \quad nY,
        \end{cases}$$
so $Y_n$ has jump size $1/\sqrt{n}=o(1)$ and
$$\mu(Y_n) = Y(c_n-Y) - c_nY/\sqrt{n} \quad \text{and} \quad \sigma^2(Y_n) = 2Y + Y(c_n-Y)/\sqrt{n} - c_nY/n.$$
For $|y| \le R$, $\mu(y) \to b(y)$ and $\sigma^2(y) \to a(y)$ uniformly, where
$$b(y) = y(c_\infty-y) \quad \text{and} \quad a(y) = 2y.$$
Note that $b$ and $\sqrt{a}$ are Lipschitz on compact subsets of $(0,\infty)$. By Lemma \ref{lem:limproc}, the desired convergence holds.\\

For $y>0$ define the mapping $T_y(f) = \inf\{t:f(t) \le y\}$ from c{\`a}dl{\`a}g functions $f:\R_+\to \R_+$ with the topology of uniform convergence on compacts. If $f_i \to f$ and $f$ is continuous then since $\inf\{f(t):t \in [0,T_y(f)-\ep]\}>y$ for any $\ep>0$, it follows that
$\liminf_i T_y(f_i) \ge T_y(f)$. On the other hand, if $y>0$ then for any $\ep>0$, $Y(T(y)+\ep)$ intersects $[0,y)$, since its diffusion coefficient at $y$ is non-zero. In other words, $\Q(f\colon \ \text{if} \ f_i \to f \ \text{then} \ \limsup_i T_y(f_i) \le T_y(f))=1$, where $\Q$ is the law of $Y$. Combining the two, the discontinuity points of $T_y$ have $\Q$-measure $0$. Let $T_n(y) = T_y(Y_n)$ and $T(y) = T_y(Y)$, and let $T_n=T_n(0)$ and $T=T(0)$. 
By the continuous mapping theorem and convergence of $Y_n$ to $Y$,
$$T_n(y) \cd T(y) \quad \text{for} \quad y >0.$$
Let $\P_n$ be the law of $Y_n$. Since $T_n \ge \lim_{y\to 0^+}T_n(y)$ and $T = \lim_{y\to 0^+}T(y)$,
$$\limsup_n \P_n(T_n \le t) \le \limsup_{y\to 0^+}\lim_n \P_n(T_n(y) \le t) = \lim_{y\to 0^+} \Q(T(y) \le t) = \Q(T \le t).$$
To obtain the opposite inequality it's enough to show that for any $\ep,t>0$ there are $\alpha,n_0$ so that \\ $\P_n(T_n >t \mid Y_n(0)\le \alpha)\le \ep$ for $n\ge n_0$, since then
\begin{align*}
\liminf_n \P_n(T_n \le t) & \ge \liminf_n \P_n(T_n(\alpha) \le t)\inf_{x \le \alpha}\P_n(T_n \le t \mid Y_n(0) =x) \\
& \ge (1-\ep)\lim_{n\to\infty}\P_n(T_n(\alpha) \le t) \\
& = (1-\ep)\Q(T(\alpha) \le t) \ge (1-\ep)\Q(T \le t).
\end{align*}
Returning to the original time scale, $X_n$ is dominated by the linear birth-and-death process $Z$ with parameter $r$ and $Z_0=X_n(0)$, so using $\rho(t)$ as in the proof of Theorem \ref{thm:bbp},
$$\P(X_n(\sqrt{n} t)>0 \mid X_n(0) \le \sqrt{n}\alpha ) \le 1-(\rho(\sqrt{n}t))^{\sqrt{n}\alpha}.$$
If $c_n\le 0$ we can take $r=1$ which has $\rho(t) = 1-1/(1+t)$, so $1-(\rho(\sqrt{n}t))^{\sqrt{n}\alpha} \le \alpha/t$ is at most $\ep$ if $\alpha \le \ep/t$.\\
If $c_n>0$ re-write $\rho(t)$ from \eqref{eq:rhot} with $\dlt=r-1$ instead of $\gma = 1-r$ to obtain
$$\rho(t)^{-1} = \frac{r e^{\dlt t}-1}{e^{\dlt t}-1} = 1 + \frac{\dlt}{1-e^{-\dlt t}}.$$
Using $\dlt_n=c_n/\sqrt{n}$,
$$(\rho(\sqrt{n}t))^{\sqrt{n}\alpha} = (1 + \frac{c_n/\sqrt{n}}{1-e^{-c_nt}})^{-\sqrt{n}\alpha} \to \exp(-c_\infty\alpha/(1-e^{-c_\infty t})) \quad \text{as} \quad n\to\infty,$$
and since for fixed $t$ the limit $\to 1$ uniformly as $\alpha \to 0$, we are done.\\

\noindent\textbf{Case 2: $Y_n(0) \to \infty$.} It remains to show the results of Step 2 are true for $y=\infty$. First we need to make sense of $T$ when $Y(0)=\infty$. Let $T(y,w) = \inf\{t:Y(t) \le w \mid Y(0)=y\}$. Since $Y$ is continuous, and using the strong Markov property,
\begin{equation}\label{Tincr}
T(y,0) \stackrel{(d)}{=} T(y,w) + T(w,0),
\end{equation}
where the last two are independent. In particular, $T(y,0)$ dominates $T(w,0)$ for $y>w$. On the other hand, letting $U=Y-c_\infty$, $\mu(U)= -U(U+c_\infty) \le -U^2$ so integrating and using Jensen's inequality,
$$\E[U(t)] \le (1/\E[U_0] + t)^{-1} \le 1/t.$$
Using Markov's inequality, if $y>w>c_\infty$ then
$$\P(T(y,w) > t) \le \P(U(t) > w-c_\infty \mid Y(0) = y) \le ((w-c_\infty)t)^{-1}.$$
It follows that $T(y,w) \cp 0$ uniformly over $y \in [w,\infty)$ as $w\to\infty$. Combining with \eqref{Tincr}, there exists $T(\infty,0)$ such that $T(y,0) \cd T(\infty,0)$ as $y\to\infty$. A similar argument shows that for $T_n(y,0) = \inf\{t:Y_n(t)=0 \mid Y_n(0) = y\}$ there is a limit $T_n(\infty,0)$. By Step 2, $T_n(y,0) \cd T(y,0)$ for each $y>0$, so it follows that $T_n(\infty,0) \cd T(\infty,0)$.
\end{proof}

\section{Threshold and metastable}\label{sec:super}

In this section we prove Theorem \ref{thm:meta}. As we've done so far in the paper, we'll avoid writing subscript $n$ on everything. So, for example $X_n^\star$ is simply denoted $X_\star$, etc. Since we'll need some additional decorations later, we'll move the $\star$ from $\tau_n^\star$ into the subscript, so, $\tau_\star = \inf\{t \colon X_t \in \{0,X_\star\}\}$. We begin with some basic theory and estimation of an important function. Let $q_+(j)=rj(1-j/n)$ and $q_-(j)=j$ denote the transition rates of $X$ and for later use, let $q(j)=q_+(j)+q_-(j)$. For integer $j$, define
\begin{align*}
h_+(j) &= \P(X_{\tau_\star}=X_\star \mid X_0=j) \quad \text{and} \\
\quad h_-(j) &= \P(X_{\tau_\star}=0 \mid X_0=j) = 1-h_+(j).
\end{align*}

By definition, $h_+(X(t\wedge \tau_\star))$ is a martingale, as is $h_-(X(t\wedge \tau_\star))$. Using the generator of the process, it follows that $q_+(j)(h_+(j+1)-h_+(j))+q_-(j)(h_+(j-1)-h_+(j))=0$ and similarly for $h_-$. Let $\nu(0)=1$ and for $j \ge 1$ let $\nu(j)= \prod_{i=1}^j q_-(i)/q_+(i)$. Using the linear equations for $h_+,h_-$ and the boundary conditions $h_+(0)=0$, $h_+(X_\star)=1$, $h_-(0)=1$, $h_-(X_\star)=0$, we can solve to find that
\begin{align}\label{eq:hrmc}
h_+(j) = \frac{\sum_{k=0}^{j-1}\nu(k)}{\sum_{k=0}^{X_\star-1}\nu(k)}\quad \text{and} \quad 
h_-(j) = \frac{\sum_{k=j}^{X_\star-1} \nu(k)}{\sum_{k=0}^{X_\star-1}\nu(k)}.
\end{align}
The solution of the above linear equations for $h_+,h_-$ to obtain \eqref{eq:hrmc} is not hard; it can be found, for example, in Example 5.3.9 of \cite{PTE}. We begin by estimating $\nu(k)$. 
Since it not more difficult to estimate, and since we'll need it later, we'll estimate the more general $\smash{\nu(j,k) = \prod_{i=j+1}^k q_-(i)/q_+(i)}$, defined for $0\le j<k <n$; we recover from it $\nu(k)= \nu(0,k)$. It will be helpful to have both a general upper bound and a precise estimate.\\

\begin{lemma}\label{lem:nu}
Let $V(x) = x(\log r - 1) - (1-x)\log(1-x)$. Then for integer $na,nb$ with $0 \le a < b \le 1$,
$$\nu(na,nb) \le \exp(-n(V(b+1/n)-V(a+1/n)))$$
and
\begin{align*}
&\nu(na,nb) = \sqrt{\frac{1-a}{1-b}}\exp(-n(V(b)-V(a))E_n(a,b), \quad \text{with} \\
&|\log E_n(a,b)| \le (12n(1-b)^2(b-a))^{-1}.
\end{align*}
\end{lemma}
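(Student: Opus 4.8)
The plan is to prove the estimate for $\nu(na,nb)$ by taking logarithms, writing $\log \nu(na,nb) = \sum_{i=na+1}^{nb} \log(q_-(i)/q_+(i))$, and carefully estimating this sum. Since $q_-(i)/q_+(i) = i / (ri(1-i/n)) = 1/(r(1-i/n))$, we have
$$\log\nu(na,nb) = -\sum_{i=na+1}^{nb} \big(\log r + \log(1-i/n)\big).$$
First I would identify the continuous function whose values at the lattice points $i/n$ we are summing: with $\phi(x) = \log r + \log(1-x)$, we have $\log\nu(na,nb) = -\sum_{i=na+1}^{nb}\phi(i/n)$, and I would observe that $V'(x) = \log r - 1 - \log(1-x) - 1 + 1 = \dots$; in fact a direct computation gives $V'(x) = \log r - \log(1-x)$, so that $\phi(x) = V'(x)$. [Here one checks: $\frac{d}{dx}\big(x(\log r - 1)\big) = \log r - 1$ and $\frac{d}{dx}\big(-(1-x)\log(1-x)\big) = \log(1-x) + 1$, summing to $\log r + \log(1-x) = \phi(x)$. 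Wait — this gives $V'(x) = \log r + \log(1-x)$, matching $\phi$; I will verify signs carefully in the writeup.] Thus $n(V(b)-V(a)) = n\int_a^b \phi(x)\,dx$ is the integral approximation to $\sum \phi(i/n)$.

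The upper bound then follows from convexity: since $\phi(x) = V'(x)$ is increasing on $[0,1)$ (as $-\log(1-x)$ is increasing), $V$ is convex, so each term satisfies $\phi(i/n) \ge n\int_{(i-1)/n}^{i/n}\phi(x)\,dx$... actually I want a lower bound on $\sum\phi(i/n)$ to get an upper bound on $\nu$. Using that $\phi$ is increasing, $\phi(i/n) \ge n\int_{(i-1)/n+1/n}^{i/n+1/n}\phi \cdot$... more cleanly: $\sum_{i=na+1}^{nb}\phi(i/n) \ge n\int_{a+1/n}^{b+1/n}\phi(x)\,dx = n(V(b+1/n) - V(a+1/n))$, where the inequality is the standard right-endpoint lower Riemann sum bound for an increasing integrand. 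Negating and exponentiating gives exactly $\nu(na,nb) \le \exp(-n(V(b+1/n)-V(a+1/n)))$.

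For the precise estimate I would apply the Euler–Maclaurin formula (equivalently, the trapezoidal rule with error control) to $\sum_{i=na+1}^{nb}\phi(i/n)$. The leading term is $n\int_a^b\phi = n(V(b)-V(a))$; the first-order correction is $\tfrac12(\phi(b) - \phi(a)) = \tfrac12(\log(1-b) - \log(1-a)) = \log\sqrt{(1-b)/(1-a)}$, which produces the prefactor $\sqrt{(1-a)/(1-b)}$ after accounting for the overall minus sign (note $\nu$ has $-\sum\phi$, so the correction contributes $e^{-\frac12(\phi(b)-\phi(a))} = \sqrt{(1-a)/(1-b)}$). The remainder is collected into $E_n(a,b)$, and the Euler–Maclaurin remainder bound gives $|\log E_n(a,b)| \le \tfrac{1}{12n^2}\int_a^b |\phi''(x)|\,dx$ or a comparable expression; since $\phi''(x) = -1/(1-x)^2$, we get $\int_a^b|\phi''| = 1/(1-b) - 1/(1-a) \le (b-a)/(1-b)^2$, and after adjusting constants (the EM remainder for a sum of $n(b-a)$ terms) this yields the claimed bound $|\log E_n(a,b)| \le (12n(1-b)^2(b-a))^{-1}$.

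The main obstacle is bookkeeping the Euler–Maclaurin correction with the correct endpoints and signs — in particular keeping straight that the sum runs over $i = na+1, \dots, nb$ rather than a symmetric range, which shifts the half-integer corrections, and making sure the remainder estimate uses $\sup$ or $\int$ of $|\phi''|$ over the right interval so that the factor $(1-b)^{-2}$ (rather than $(1-a)^{-2}$, which could blow up) appears. One must also confirm that $E_n(a,b)$ as defined by extracting the explicit prefactor really does absorb all higher-order terms with the stated bound, which amounts to checking that the second Bernoulli correction and beyond are dominated by the first remainder term for the monotone, single-signed $\phi''$ at hand. I do not anticipate any conceptual difficulty beyond this careful accounting.
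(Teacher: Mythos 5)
Your approach is the same as the paper's: write $-\log\nu(na,nb)$ as the lattice sum of $\log r+\log(1-i/n)$, recognize $V$ as the antiderivative (the paper works with $f(x)=-\log(1-x)$ and splits off the $(nb-na)\log r$ term, which is the same thing as your $\phi=V'$), get the crude bound by a termwise monotone comparison with the integral over shifted endpoints, and get the precise estimate from the trapezoidal rule with a $\phi''$-controlled remainder, the half-difference endpoint correction giving the prefactor $\sqrt{(1-a)/(1-b)}$. The prefactor sign and the reduction of the remainder to the stated bound (using $(1-a)\ge(1-b)$ and $(b-a)\le 1$, so the lemma's bound is weaker than the standard trapezoid error) are handled correctly.

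One point you must fix before writing this up: $\phi(x)=\log r+\log(1-x)$ is \emph{decreasing} on $[0,1)$ and $V$ is \emph{concave} ($V''=\phi'=-1/(1-x)<0$); you assert the opposite. Your displayed inequality $\sum_{i=na+1}^{nb}\phi(i/n)\ge n\int_{a+1/n}^{b+1/n}\phi$ is nevertheless true, but precisely \emph{because} $\phi$ is decreasing: each term satisfies $\phi(i/n)\ge n\int_{i/n}^{(i+1)/n}\phi$, i.e.\ you compare each lattice value with the interval to its \emph{right}, which is exactly what produces the shifted arguments $a+1/n,\,b+1/n$ in the statement. Under your stated premise (increasing $\phi$) that termwise bound reverses and the argument would fail, so the justification as written is wrong even though the inequality is right. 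Relatedly, your intermediate Euler--Maclaurin bound $\tfrac{1}{12n^2}\int_a^b|\phi''|$ is short by a factor of $n$ (the remainder for the sum is $n$ times the trapezoid error of the integral, giving $\tfrac{1}{12n}\int_a^b|\phi''|\le \tfrac{b-a}{12n(1-b)^2}$), but since the lemma's stated bound is weaker than this, your ``adjusting constants'' step does go through.
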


\begin{proof}
Since $q_+(i)/q_-(i) = r(1-i/n)$,
\begin{align}\label{eq:nujk}
-\log \nu(j,k) = \sum_{i=j+1}^k \left(\log r + \log(1-i/n) \right) = (k-j)\log r - \sum_{i=j+1}^k f(i/n),
\end{align}
where $f(x)=-\log(1-x)$ is positive and increasing for $x \in (0,1)$. Since $f(x)$ has the antiderivative $x + (1-x)\log (1-x)$, the upper bound follows. Using a trapezoidal approximation with $k-j$ subintervals of size $1/n$ and writing the approximation as an upper Riemann sum minus a telescoping triangular correction,
\begin{align}\label{eq:trap-nu}
\int_{j/n}^{k/n} f(x)dx = \frac{1}{n}\sum_{i=j+1}^{k} f(i/n) - \frac{1}{2n}(f(k/n)-f(j/n)) + R_n(j,k),
\end{align}
where the error term (see \cite{trap-rule} for a simple proof) has the bound
$$|R_n(j,k)| \le \frac{\max_{x \in [j/n,k/n]}|f''(x)|}{12(k/n-j/n)n^2} = \frac{1}{(1-k/n)^2}\frac{1}{12(k/n-j/n)n^2}.$$
and using the antiderivative of $f$ together with \eqref{eq:nujk} and \eqref{eq:trap-nu},
$$-\frac{1}{n}\log \nu(na,nb) = V(b)-V(a)+\frac{1}{2n}\log\frac{1-b}{1-a}+R_n(na,nb)$$
and the precise estimate follows.
\end{proof}

Note that $V(x)$ has $V(0) = 0$, $V'(x) = \log(r(1-x))$ and $V''(x) = -1/(1-x)$. In particular, it is concave on $[0,1)$ and has $V'(x_\star)=\log 1 = 0$, so is increasing and positive on $(0,x_\star)$ and decreasing on $(x_\star,1)$, with maximum $V_\star = V(x_\star) = \log r + 1/r - 1>0$, and has $V''(x_\star) = -1/(1-(1-1/r)) = -r$. If $\dlt_\infty = \lim_n \dlt>0$ then $V_\star$ has a positive limit, while if $\dlt \to 0$ then $V_\star = \log(1+\dlt)+1/(1+\dlt)-1 = \dlt - \dlt^2/2 - \dlt + \dlt^2 + O(\dlt^3) \sim \dlt^2/2$.

\subsection{Extinction probability}\label{sec:ext-prob}

In this section we prove the estimate of extinction probability in Theorem \ref{thm:meta}. The result in the case $\dlt_\infty>0$ belongs to (A2) in Theorem 1 of \cite{thresh-lim}, while in the case $\dlt \to 0$ it is new. The approach is to estimate $h_-(X_0)$ for $h_-$ as in \eqref{eq:hrmc}, and the proof works by estimating the values of $\nu(k)$ separately for small and large $k$.
\begin{lemma}\label{lem:meta-thr}
Suppose $\dlt X_0 \to a_\infty \in (0,\infty)$. Then,
$$\P(X_{\tau_\star}=0 ) \to \begin{cases} e^{-a_\infty} & \text{if} \quad \dlt \to 0 \\
(1+\dlt_\infty)^{-X_0} & \text{if} \quad \dlt \to \dlt_\infty>0.\end{cases}$$
If $\dlt X_0 \to \infty$ then $\P(X_{\tau_\star}=X_\star ) \to 1$.
\end{lemma}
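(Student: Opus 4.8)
The plan is to work directly from the exact formula \eqref{eq:hrmc}, namely $h_-(X_0) = \left(\sum_{k=X_0}^{X_\star-1}\nu(k)\right)/\left(\sum_{k=0}^{X_\star-1}\nu(k)\right)$, and to show that the denominator sum is dominated by its first few terms $\nu(0)=1, \nu(1), \nu(2), \dots$, because $\nu(k)$ decays geometrically with ratio roughly $q_-(k+1)/q_+(k+1) = 1/(r(1-(k+1)/n)) \approx 1/r = 1-\dlt+O(\dlt^2)$ for $k$ small compared to $n$. The first observation is that $\nu(k)/\nu(k-1) = q_-(k)/q_+(k) = (r(1-k/n))^{-1}$, which is increasing in $k$ and equals $1/r + O(k/n)$ for $k = o(n)$; so for $k$ in any range $o(n)$ we have $\nu(k) = r^{-k}\exp(O(k^2/n))$, and more importantly $\nu(k) \le r^{-k}$ for all $k \le X_\star$ since each factor is at most $1/r$. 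Hence $\sum_{k=0}^{X_\star-1}\nu(k) \le \sum_{k\ge 0} r^{-k} = r/(r-1) = 1/(1-1/r)$ when $\dlt$ is bounded away from $0$, and more generally the tail $\sum_{k \ge m}\nu(k)$ is exponentially small in $m$ once $m$ is large.

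For the case $\dlt \to \dlt_\infty > 0$: here $X_0$ must be bounded (since $\dlt X_0 \to a_\infty < \infty$ and $\dlt$ has a positive limit), so $X_0$ takes some constant value along the relevant subsequences, say $X_0 \to X_0^\infty$. The numerator is $\sum_{k \ge X_0}\nu(k)$ and the denominator is $\sum_{k \ge 0}\nu(k)$; since for fixed $k$ we have $\nu(k) \to (1+\dlt_\infty)^{-k}$ (using $r = 1+\dlt \to 1+\dlt_\infty$ and the $O(k^2/n)$ correction vanishing for fixed $k$), and since both sums are uniformly tail-bounded by a convergent geometric series, dominated convergence gives $h_-(X_0) \to \left(\sum_{k \ge X_0^\infty}(1+\dlt_\infty)^{-k}\right)/\left(\sum_{k\ge 0}(1+\dlt_\infty)^{-k}\right) = (1+\dlt_\infty)^{-X_0^\infty}$, which is the claimed $(1+\dlt_\infty)^{-X_0}$.

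For the case $\dlt \to 0$ with $\dlt X_0 \to a_\infty$: now $X_0 \to \infty$, and $\nu(k) = r^{-k}\exp(O(k^2/n))$. I would split the sums at some cutoff $K = K(n)$ with $1 \ll K \ll \min(1/\dlt^2 \cdot \text{(something)}, \dots)$ — more precisely, $K$ chosen so that $K\dlt \to 0$ but also $K^2/n \to 0$ so that $\exp(O(k^2/n)) = 1+o(1)$ uniformly for $k \le K$, while simultaneously $K\dlt$ is still large enough, or rather: I want $\sum_{k \le K}\nu(k) = \sum_{k\le K}r^{-k}(1+o(1)) \sim \sum_{k \le K}e^{-\dlt k}$, which behaves like $\int_0^{K}e^{-\dlt u}du \asymp 1/\dlt$ when $K\dlt \to \infty$. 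So I actually need $1/\dlt \ll K \ll \sqrt{n}$; this is feasible exactly because $\dlt = \omega(1/\sqrt{n})$ in the supercritical phase, so $1/\dlt = o(\sqrt{n})$. With such $K$, the denominator $\sum_{k=0}^{X_\star-1}\nu(k) \sim \sum_{k\ge 0}e^{-\dlt k} = 1/(1-e^{-\dlt}) \sim 1/\dlt$ (the contribution beyond $K$ being $O(e^{-\dlt K}/\dlt) = o(1/\dlt)$, and one checks $\nu(k) \le e^{-\dlt k}\cdot(\text{sub-exponential in }k)$ stays summable — or simply use $\nu(k) \le r^{-k}$ throughout and that $\sum r^{-k} = 1/(1-1/r) \sim 1/\dlt$). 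The numerator $\sum_{k=X_0}^{X_\star-1}\nu(k)$: writing $\nu(k) = \nu(X_0)\cdot\prod_{i=X_0+1}^{k}(r(1-i/n))^{-1}$, one gets $\sum_{k \ge X_0}\nu(k) \sim \nu(X_0)/\dlt$ by the same geometric-tail reasoning (using that the per-step ratio is $\sim 1/r \sim 1-\dlt$ uniformly over the range that contributes, which is $k - X_0 = O(1/\dlt) = o(\sqrt n)$, keeping $k = o(n)$ as needed). Finally $\nu(X_0) = \prod_{i=1}^{X_0}(r(1-i/n))^{-1} = r^{-X_0}\exp(O(X_0^2/n))$, and $X_0 = a_\infty/\dlt(1+o(1)) = o(\sqrt n)$ gives $X_0^2/n \to 0$, so $\nu(X_0) \sim r^{-X_0} = (1+\dlt)^{-X_0} = \exp(-X_0\log(1+\dlt)) = \exp(-X_0(\dlt + O(\dlt^2))) \to e^{-a_\infty}$. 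Therefore $h_-(X_0) = \nu(X_0)(1/\dlt)(1+o(1)) / ((1/\dlt)(1+o(1))) \to e^{-a_\infty}$.

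For the last assertion ($\dlt X_0 \to \infty \Rightarrow h_+(X_0) \to 1$), equivalently $h_-(X_0) \to 0$: the numerator $\sum_{k \ge X_0}\nu(k) \le \sum_{k \ge X_0}r^{-k} = r^{-X_0}/(1-1/r) \sim r^{-X_0}/\dlt$, while the denominator is $\ge 1$ (indeed $\sim 1/\dlt$ when $\dlt\to 0$, and $\ge$ a positive constant when $\dlt_\infty > 0$), so $h_-(X_0) \le r^{-X_0}(1+o(1)) = \exp(-X_0\log(1+\dlt))(1+o(1))$. Here I only claim $\log(1+\dlt) \ge \dlt/2$ for $\dlt$ small (or $\log(1+\dlt) \ge c\dlt$ for $\dlt$ in any bounded range), so $h_-(X_0) \le e^{-cX_0\dlt}(1+o(1)) \to 0$. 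One subtlety: when $\dlt_\infty > 0$ and $\dlt X_0 \to \infty$ forces $X_0 \to \infty$, the bound $\nu(k) \le r^{-k}$ still holds for all $k \le X_\star$ and the numerator is $\le r^{-X_0}/(1-1/r)$, which $\to 0$; fine. The main obstacle I anticipate is the bookkeeping in the $\dlt \to 0$ case — keeping the cutoff $K$ in the sweet spot $1/\dlt \ll K \ll \sqrt n$ and verifying that the multiplicative errors $\exp(O(k^2/n))$ and $\exp(O((k-X_0)^2/n))$ are genuinely $1+o(1)$ uniformly over exactly the ranges that carry the mass of the sums, rather than over the full range $[0,X_\star)$ where they are not negligible. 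This is where Lemma \ref{lem:nu} is useful: its precise estimate gives $\nu(na,nb) = \sqrt{(1-a)/(1-b)}\exp(-n(V(b)-V(a)))E_n(a,b)$, and near the origin $V(x) = x(\log r - 1) + x + O(x^2) = x\log r + O(x^2)$, so $n(V(b)-V(a)) = (nb-na)\log r + O(n(b^2-a^2))$ recovers exactly the $\sum r^{-k}\exp(O(k^2/n))$ heuristic with controlled constants, and lets me localize cleanly.
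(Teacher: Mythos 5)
Your overall strategy is the same as the paper's (work from the exact formula \eqref{eq:hrmc}, exploit near-geometric decay of $\nu(k)$ for small $k$, cut off at an intermediate scale, and control the rest with Lemma \ref{lem:nu}), but the argument as written rests on an inequality that is backwards. Since $q_-(i)/q_+(i) = (r(1-i/n))^{-1} \ge 1/r$, one has $\nu(k) \ge r^{-k}$, not $\nu(k) \le r^{-k}$; the valid upper bound is $\nu(k) \le (r(1-M/n))^{-k}$ for $k\le M$, as in \eqref{eq:nu-bnd}. This is not a cosmetic slip, because the whole difficulty of the lemma sits in the range where the reversed inequality fails: as $k$ approaches $X_\star$ the per-step ratio tends to $1$, and in fact $\nu(X_\star-1) \approx e^{-nV_\star} \sim e^{-c^2/2}$ (with $c=\sqrt{n}\dlt$, in the case $\dlt\to 0$), which is far larger than $r^{-X_\star} \approx e^{-c^2}$. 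Consequently: (i) the one-line bound $\sum_{k=0}^{X_\star-1}\nu(k) \le 1/(1-1/r)$ is false; (ii) the ``uniform geometric tail bound'' invoked to justify dominated convergence in the case $\dlt\to\dlt_\infty>0$ is not available without a separate estimate for $k$ of order $n$; and (iii) in the $\dlt X_0\to\infty$ part, the bound ``numerator $\le \sum_{k\ge X_0} r^{-k}$'' is simply wrong when $X_0$ is comparable to $X_\star$, which that hypothesis allows.

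All three gaps are repaired by exactly the step the paper carries out and that you only gesture at in your closing paragraph: choose a cutoff $M$ with $\dlt(M-X_0)\to\infty$ and $M=o(\dlt n)$ (resp.\ $M\to\infty$, $M=o(n)$ when $\dlt_\infty>0$), use the two-sided bound \eqref{eq:nu-bnd} below $M$, and above $M$ use the upper bound of Lemma \ref{lem:nu} together with $V(x)\ge \dlt x/2$ on $[0,x_\star]$ (resp.\ $V(x)\ge C_1x$) to show $\sum_{k=M}^{X_\star-1}\nu(k) = o(1/\dlt)$ (resp.\ $o(1)$); note this requires the $V$-based bound and not just the $e^{-\dlt k}$ heuristic, since the correction $e^{O(k^2/n)}$ is of size $e^{\Theta(c^2)}$ at $k\asymp X_\star$ and is not negligible there. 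For the assertion when $\dlt X_0\to\infty$, the clean fix (used in the paper) is monotonicity of $X_0\mapsto h_-(X_0)$, which lets you assume $\dlt X_0\to\infty$ arbitrarily slowly, so that $X_0\le M$ and the asymptotics already established give $h_-(X_0)\sim e^{-\dlt X_0}\to 0$ (or numerator $\to 0$ in the $\dlt_\infty>0$ case); your direct bound on the numerator does not survive the corrected inequality. With these repairs your proof becomes essentially the paper's proof; as it stands, the tail control that the lemma actually requires is exactly the step your reversed inequality was supposed to supply.
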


\begin{proof}
The quantity of interest is
$$\P(X_{\tau_\star}=0 ) = h_-(X_0) = \frac{\sum_{k=X_0}^{X_\star-1}\nu(k)}{\sum_{k=0}^{X_\star-1}\nu(k)}.$$
Since $q_-(k)/q_+(k) = (r(1-k/n))^{-1}$, for any $M \ge 1$ and $k \in \{1,\dots,M\}$,
$$r^{-k} \le \nu(k) \le (r(1-M/n))^{-k}$$
and so
\begin{align}\label{eq:nu-bnd}
r^{-j}\frac{1-r^{-(M-j)}}{1-1/r} \le \sum_{k=j}^{M-1}\nu(k) \le r^{-j}\frac{1}{1-(r(1-M/n))^{-1}}.
\end{align}
\textit{Case 1: $\dlt\to 0$.} Suppose $M$ is taken large enough that $\dlt(M-j) \to\infty$, and small enough that $M = o(\dlt n)$. 
Then $r^{-(M-j)} = (1+\dlt)^{-(M-j)} \le e^{-\dlt(M-j)} \to 0$ and $r(1-M/n) = (1+\dlt)(1-M/n) = 1 + \dlt + o(\dlt)$, so that
$$1-1/(r(1-M/n)) = 1-1/(1+\dlt+o(\dlt)) = 1-(1-\dlt+o(\dlt)) \sim \dlt.$$
Similarly, $1-1/r \sim \dlt$. Since $\dlt\to 0$, $r^{-j} = (1+\dlt)^{-j} \sim e^{-\dlt j}$, so
\begin{align}\label{eq:nu-bnd2}
\sum_{k=j}^{M-1}\nu(k) \sim e^{-\dlt j}/\dlt.
\end{align}
We have $V'(0) = \log r = \dlt-o(\dlt)$, and $x_\star = \dlt/r=o(1)$, so $V(x) \sim x$ uniformly over $x \in [0,x_\star]$, as $n\to\infty$. Using the upper bound from Lemma \ref{lem:nu} with $a=0$ and $nb$ in the range $\{M,\dots,X_\star-1\}$, for large $n$
$$\nu(nb) = \nu(0,nb) \le \exp(-\dlt nb/2).$$
Summing over $nb$,
$$\sum_{k=M}^{X_\star-1}\nu(k) \le \frac{e^{-\dlt M/2}}{1-e^{-\dlt/2}} = o(1/\dlt),$$
since $\dlt M\to\infty$ by assumption and $1-e^{-\dlt/2} \sim \dlt/2$. Noting that $1-1/r \sim \dlt$ and combining with \eqref{eq:nu-bnd2}, 
\begin{align}\label{eq:nu-sum0}
\sum_{k=j}^{X_\star-1}\nu(k) \sim e^{-\dlt j}/\dlt.
\end{align}
Using the values $j=0$ and $j=X_0$, we conclude that if $\dlt M - \dlt X_0 \to\infty$ (which also implies $\dlt M \to\infty$) and $M=o(\dlt n)$, then
$$h_-(X_0) \sim \frac{e^{-\dlt X_0}/\dlt}{e^{-0}/\dlt} = e^{-\dlt X_0} \to e^{-a_\infty}.$$
If $a_\infty<\infty$, since $\dlt X_0$ has a finite limit and $\sqrt{n}\dlt\to\infty$ it is easy to check that $M=\sqrt{n}(\sqrt{n}\dlt)^{1/2}$ satisfies the conditions. If $a_\infty=\infty$, since $e^{-a_\infty}=0$ and $h_-(X_0)$ decreases with $X_0$ it is enough to consider the case where $\dlt X_0\to\infty$ arbitrarily slowly; thus, to satisfy the condition $\dlt M - \dlt X_0 \to\infty$ it is sufficient that $\dlt M \to \infty$, for which the above choice of $M$ suffices.\\

\noindent\textit{Case 2: $\dlt \to \dlt_\infty >0$.} Note that $r\to r_\infty = (1+\dlt_\infty)>1$. Also, the condition $\dlt X_0\to a_\infty$ is equivalent to $X_0$ eventually constant if $a_\infty<\infty$, and to $X_0\to\infty$ if $a_\infty=\infty$. Suppose that $(M-j)\to\infty$ and $M=o(n)$. Then, $r^{-(M-j)} \to 0$ and $1-M/n \to 1$. From \eqref{eq:nu-bnd} we find
\begin{align}\label{eq:nu-bnd3}
\sum_{k=j}^{M-1}\nu(k) \sim r^{-j}/(1-1/r_\infty).
\end{align}
Since $V(0)=0$, $V$ is concave and both $x_\star$ and $V(x_\star)$ have a positive limit, for some constant $C_1>0$, eventually $V(x) \ge C_1x$ for $x \in [0,x_\star]$. Using Lemma 2 as before with $a=0$ and $nb$ in the range $\{M,\dots,X_\star-1\}$, since $\dlt$ is bounded by assumption, $1-b \ge 1-x_\star$ which has a positive lower bound and $nb \ge nM \to\infty$, so uniformly $|\log E_n(a,b)| \to 0$ and for large $n$ and some constant $C_2>0$,
$$\nu(nb) = \nu(0,nb) \le C_2e^{-C_1 nb}.$$
Again, summing over $nb$, 
$$\sum_{k=M}^{X_\star-1}\nu(k) \le C_2\frac{e^{-C_1 M}}{1-e^{-C_1}} \to 0.$$
Using the values $j=0$ and $j=X_0$ for constant $X_0<\infty$ and letting $M=\sqrt{n}$, combining the above with \eqref{eq:nu-bnd3} we find
$$h_-(X_0) = \frac{r^{-X_0}/(1-1/r_\infty) + o(1)}{1-1/r_\infty + o(1)} \to r_\infty^{-X_0}.$$
If $X_0\to\infty$ we may again assume it does so arbitrarily slowly, in which case $M=\sqrt{n}$ again suffices. Using $j=0$ and $j=X_0$ as above we find the numerator $\to 0$ while the denominator $\to 1-1/r_\infty>0$, so $h_-(X_0) \to 0$ as desired.
\end{proof}

\subsection{Rapid extinction}

Next we prove the results on rapid extinction from Theorem \ref{thm:meta}. Define the probability measures $P^\star$ and $P^0$ for events $E$ by
\begin{align}\label{eq:cond-meas}
P^\star(E) = \P(E \mid X_{\tau_\star}=X_\star) \quad \text{and} \quad P^0(E) = \P(E \mid X_{\tau_\star} = 0).
\end{align}
Using the well-known Doob h-transform (which can be found by computing the generator of the conditioned process), we find that with respect to $P^\star$ and $P^0$ respectively, for $t<\tau_\star$, $X$ is a continuous-time Markov chain with transition rates
\begin{align}\label{eq:cond-trans}
q_+^\star(j) &= q_+(j)\frac{h_+(j+1)}{h_+(j)} \quad \text{and} \quad q_-^\star(j) = q_-(j) \frac{h_+(j-1)}{h_+(j)}, \quad \text{and} \nonumber \\
q_+^0(j) &= q_+(j)\frac{h_-(j+1)}{h_-(j)} \quad \text{and} \quad q_-^0(j) = q_-(j) \frac{h_-(j-1)}{h_-(j)}. 
\end{align}

\nid The following is an equivalent formulation of the rapid extinction results of Theorem \ref{thm:meta}.

\begin{lemma}\label{lem:super-die}
Suppose $\dlt X_0 \to a_\infty \in (0,\infty)$ and let $P^0$ be as in \eqref{eq:cond-meas}.
\begin{itemize}[noitemsep]
\item If $\dlt \to 0$, then for $w\ge 0$, $P^0( \tau \le w X_0) \to \exp(-a_\infty/(e^{-a_\infty w}-1))$.
\item If $\dlt \to \dlt_\infty >0$ then letting $\gma_\infty=\dlt_\infty/r_\infty$,
$$P^0(\tau \le t/r_\infty) \to (1+ \gma_\infty/(e^{-\gma_\infty t}-1))^{-X_0}.$$
\end{itemize}
\end{lemma}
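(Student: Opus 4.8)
The plan is to build on the $h$-transform description already in hand: under $P^0$, on $[0,\tau_\star)$ the process $X$ is a Markov chain with rates $q^0_\pm$ from \eqref{eq:cond-trans}, and since $X_{\tau_\star}=0$ under $P^0$ one has $\tau=\tau_\star$, so $\tau$ is the hitting time of $0$ of that chain. The crux is that at states $j$ small compared with $X_\star$, this chain is, to leading order, a \emph{subcritical} binary branching process. Using the exact identity $\nu(k+1)/\nu(k)=(r(1-(k+1)/n))^{-1}$ together with the tail estimates from the proof of Lemma \ref{lem:meta-thr} (and, in the marginally supercritical regime $\dlt=o(1)$, the sharp form of Lemma \ref{lem:nu}), one shows
$$ \frac{h_-(j+1)}{h_-(j)}=\frac{1+o(1)}{r}, \qquad \frac{h_-(j-1)}{h_-(j)}=r\,(1+o(1)) $$
uniformly over $1\le j\le M$, for a cutoff $M$ with $X_0\ll M\ll\min(\sqrt n,X_\star)$. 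Combined with $q_+(j)=rj(1-j/n)$ and $q_-(j)=j$ this gives $q^0_+(j)=(1+o(1))j$ and $q^0_-(j)=(1+o(1))rj$ uniformly over $j\le M$; equivalently, the conditioned chain restricted to $[0,M]$ agrees up to a $(1+o(1))$ factor in its rates with the bbp of parameter $r$ \emph{conditioned on extinction}, i.e.\ (by the usual $h$-transform) the bbp with per-particle birth rate $1$ and death rate $r$.

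The second ingredient is to show that under $P^0$ the chain does not leave $[0,M]$ before extinction, with probability $1-o(1)$. The conditioned chain has strictly negative drift $q^0_+(j)-q^0_-(j)=-(1+o(1))\dlt j$, and from $X_0$ it is dominated by a bbp whose supercriticality parameter tends to $0$ fast enough — guaranteed by choosing $M$ so that the accumulated multiplicative error satisfies $\eps_n X_0\to0$ — that over its extinction time-scale $O(X_0)$ its running maximum is $O_p(X_0)=o(M)$; hence $P^0(\sup_{t\le\tau}X_t\ge M)\to0$. When $\dlt\to\dlt_\infty>0$, $X_0$ is eventually a fixed constant and the dominating process is genuinely subcritical, so this is immediate (say with $M=\sqrt n$); when $\dlt\to0$ one may take $M=X_0^{1/2}n^{1/4}$, which satisfies the required bounds, and the claim reduces to the standard near-critical running-maximum estimate.

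On the event $\{\sup_{t\le\tau}X_t<M\}$ the monotone coupling of Appendix A gives $V^-_t\le X_t\le V^+_t$ for all $t$, where $V^\pm$ are bbps with per-particle rates $(1\mp\eps_n,\,r(1\pm\eps_n))$, so $\tau_{V^-}\le\tau\le\tau_{V^+}$. Since $\eps_n\to0$ and the extinction law of a bbp depends continuously on its rates through the solution of \eqref{eq:bbp-rho}, $V^-$ and $V^+$ share a rescaled limit, namely that of the bbp $V$ with per-particle birth rate $1$ and death rate $r$; for $V$, \eqref{eq:bbp-rho} reads $\rho'=(r-\rho)(1-\rho)$, $\rho_0=0$, with solution $\rho_t^{-1}=1+(\dlt/r)/(e^{\dlt t}-1)$ — equivalently $\rho_t=r\,\rho^Z_t$ for $\rho^Z$ the extinction-by-time-$t$ probability of the supercritical bbp of parameter $r$, and $V_t\stackrel{(d)}{=}\hat Z_{rt}$ for $\hat Z$ the standard bbp of Theorem \ref{thm:bbp} with parameter $1/r$, so $\hat\gma:=1-1/r=\dlt/r$. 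Thus $P^0(\tau\le t)=(\rho_t^V)^{X_0}+o(1)$, and one finishes by undoing the time change $\tau_V\stackrel{(d)}{=}\tau_{\hat Z}/r$ and applying Theorem \ref{thm:bbp}: part 1(b) when $\dlt\to\dlt_\infty>0$ (with $\hat\gma\to\gma_\infty$, $X_0$ eventually constant) and part 2(a) when $\dlt\to0$ (with $\hat\gma X_0\to a_\infty$), which in each case yields the claimed limit distribution after letting $r\to r_\infty$.

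I expect the main obstacle to be the simultaneous balancing of $M$: it must be large enough that $X_0/M\to0$ (so the dominating near-critical bbp does not reach $M$ before dying), small enough that $M/n\to0$ (so $q_+(j)\approx rj$) and that $\sum_{k\ge M}\nu(k)$ is negligible next to $\nu(j)$ for every $j\le M$ (so the ratios $h_-(j\pm1)/h_-(j)$ are controlled down to $j=1$), and tuned so that $\eps_n X_0\to0$; carrying all of this out, together with the near-critical running-maximum estimate, is precisely where the regime $\dlt=\omega(1/\sqrt n)=o(1)$ forces the use of Lemma \ref{lem:nu} rather than crude bounds. The remaining steps — solving the Riccati-type ODE, continuity in parameters, and the appeal to Theorem \ref{thm:bbp} — are routine.
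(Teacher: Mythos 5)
Your proposal follows essentially the same route as the paper's proof: the $h$-transform rates \eqref{eq:cond-trans}, uniform estimates of $h_-(j+1)/h_-(j)$ and $h_-(j-1)/h_-(j)$ for $j\le M$ via the $\nu$-sums of Lemma \ref{lem:nu}, identification of the conditioned chain with the extinction-conditioned bbp (birth $1$, death $r$), containment below a cutoff $M$ of order $\sqrt{n/c}$, and a two-sided bbp comparison with rate errors tuned so that $\eps_n X_0\to 0$, finishing with a time change by $r$ and Theorem \ref{thm:bbp} — the paper does exactly this, differing only cosmetically (it time-changes the conditioned chain to unit per-particle death rate before sandwiching, and bounds the exit probability by $(1-\gma)^{M-X_0}$ rather than by $X_0/M$). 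The one point to tighten is that in the regime $\dlt\to 0$ your displayed ratio estimates must be sharpened from $1+o(1)$ to $1+o(\dlt)$ — mere continuity of the extinction law in the rates is not enough on the time scale $X_0\asymp 1/\dlt$ — which is precisely what your condition $\eps_n X_0\to 0$ demands and what the paper's uniform estimate $\sum_{k=j}^{X_\star-1}\nu(j,k)\sim 1/\dlt$ for $j\le M$ delivers.
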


\begin{proof}
We first estimate the transition rates \eqref{eq:cond-trans} with respect to $P^0$, then approximate by a linear birth-and-death process as in the proof of Theorems \ref{thm:disc} and \ref{thm:lin-diff} to obtain the scaling of $\tau$.
\begin{align}\label{eq:sdie-hp}
\frac{h_-(j+1)}{h_-(j)} = \frac{\sum_{k=j+1}^{X_\star-1}\nu(k)}{\sum_{k=j}^{X_\star-1}\nu(k)} = 1 - \frac{\nu(j)}{\sum_{k=j}^{X_\star-1}\nu(k)} = 1 - \frac{1}{\sum_{k=j}^{X_\star-1}\nu(j,k)}
\end{align}
and similarly,
\begin{align}\label{eq:sdie-hm}
\frac{h_-(j-1)}{h_-(j)} = 1 + \frac{1}{\sum_{k=j}^{X_\star-1}\nu(j-1,k)}.
\end{align}
Again we divide by cases.\\

\noindent\textit{Case 1: $\dlt\to 0$.} Let $c=\sqrt{n}\dlt$ so that $c\to\infty$ and $c=o(\sqrt{n})$ by assumption and let $M = \lf 2\sqrt{n/c} \rf$, so that $M/\sqrt{n} =o(1)$, and let $M' = \lf \sqrt{n} \rf$. For $j \le M$, write
\begin{align}\label{eq:sdie-sum1}
\sum_{k=j}^{X_\star-1}\nu(j,k) = \sum_{k=j}^{M'-1} \nu(j,k) + \sum_{k= M'}^{X_\star-1}\nu(j,k).
\end{align}
If $j \le k < M'$ then $r^{-(k-j)} \le \nu(j,k)\le (r(1-M'/n))^{-(k-j)}$, so uniformly over $j \le M$,
\begin{align}\label{eq:sdie-sum2}
\frac{1-r^{-(M'-M)}}{1-1/r} \le \sum_{k=j}^{M'-1} \nu(j,k) \le \frac{1}{1-1/(r(1-M'/n))}.
\end{align}
Thus the above sum $\sim 1/(1-1/r) \sim 1/\dlt$ uniformly over $j \le M$ provided $r^{-(M'-M)} \to 0$ and $M'/n = o(\dlt)$. The second point is clear, since $M'/n \le 1/\sqrt{n} =o(\dlt)$. To check the first point, since $r^{-(M'-M)}=(1+\dlt)^{-(M'-M)} \le e^{-\dlt(M'-M)}$ it is enough that $\dlt(M'-M)\to\infty$. Since $c=\sqrt{n}\dlt \to \infty$ and $\dlt=O(1)$,
$$\dlt(M'-M) = c - 2\sqrt{c} + O(\dlt) \to \infty.$$
To estimate the second sum on the RHS of \eqref{eq:sdie-sum1}, we note that since $V'(0)=\dlt-o(\dlt)$ and $x_\star=o(1)$, for large $n$, $V(x)-V(y)\ge \dlt(x-y)/2$ if $0 \le y<x\le x_\star$. With $na=j \le M$ and $M' \le nb \le X_\star-1$, $1-a,1-b \ge 1-x_\star\to 1$ and $nb-na \ge M'-M \to \infty$, so $\log|E_n(a,b)| \to 0$ uniformly over $a$ and $b$, thus for large $n$ and $j\le M$, $M' \le k \le X_\star-1$,
$$\nu(j,k) \le 2\exp(-\dlt(k-j)/2).$$
Summing over $k$, the second term in \eqref{eq:sdie-sum1} is $\le 2e^{-\dlt(M'-M)/2}/(1-e^{-\dlt/2}) \sim 4e^{-\dlt(M'-M)/2}/\dlt=o(1/\dlt)$. Combining the two estimates, it follows that the sum on the LHS of \eqref{eq:sdie-sum1} $\sim 1/\dlt$ uniformly over $j\le M$.\\

Since $\nu(j-1,k)=\nu(j-1,j)\nu(j,k)$ and $1/r \le \nu(j-1,j) \le 1/(r(1-M/n))$ which $\to 1$ uniformly over $j\le M$, using \eqref{eq:sdie-hp},\eqref{eq:sdie-hm} and \eqref{eq:cond-trans} we find that uniformly over $j \le M$,
\begin{align}\label{eq:sdie-crates}
q_+^0(j) \sim r \, j \, (1-j/n) \, (1 - \dlt) \quad \text{and} \quad q_-^0(j) \sim j \, (1+\dlt)
\end{align}
Let $\tld X$ denote the process with $\tld X_0=X_0$ and transition rates $\tld q_-(j) = j$ and $\tld q_+(j) = q_+^0(j)( j / q_-(j))$, which are the same as for $X$ w.r.t.~the measure $P^0$ except multiplied by the factor $j/q_-^0(j)$ at each non-zero $j$. Then $X$ w.r.t.~$P^0$ is obtained from $\tld X$ as $X_t = \tld X_{s(t)}$ for $t\le \tau$, where $s(t)$ is the inverse of the function 
$$t(s) = \int_0^s \frac{\tld X_u}{q_-^0(\tld X_u)}du.$$
From the estimate of \eqref{eq:sdie-crates}, we have
$$\sup_{t \le \tau_M}s(t)/t \sim 1+\dlt \to 1,$$
so to obtain the desired result for $X$ w.r.t.~$P^0$ it is enough to show it for $\tld X$. Since $M=o(\sqrt{n})$, $M/\dlt n = o(1/\sqrt{n}\dlt) = o(1)$. Using $r=1+\dlt$, uniformly over $j\le M$ we have
$$\tld q_+(j)/j \sim (1+\dlt)(1-o(\dlt))(1-\dlt)/(1+\dlt) = 1-\dlt+o(\dlt).$$
Let $\gma,\gma'$ be lower and upper bounds on $1-\tld q_+(j)/j$, respectively, and construct bbps $Z,Z'$ with parameters $1-\gma$ and $1-\gma'$, and initial value $X_0$, so that $Z'_t \le X_t \le Z_t$ for $t\le \tau_M$. Following the proof of Theorem \ref{thm:lin-diff}, combining \eqref{eq:X-sm} and \eqref{eq:taum-1} we have
$$\P(\tau \ne \tau_M \mid X_0) \le (1-\gma)^{M-X_0},$$
which $\to 0$ if $X_0 \le \sqrt{n/c}$ since then $\gma \sim \dlt$ and $\dlt (M-X_0) \ge \dlt \sqrt{n/c} = \sqrt{c} \to \infty$. As in the proof of Theorem \ref{thm:lin-diff}, it only remains to check that $(\gma-\dlt)X_0,(\gma'-\dlt)X_0 \to 0$. This follows easily from the fact that $\gma,\gma' \sim \dlt$.\\

\noindent\textit{Case 2: $\dlt \to \dlt_\infty >0$.} Let $r_\infty=1+\dlt_\infty$. Since $\dlt X_0$ converges, we may assume $X_0$ is constant. We follow the same approach as before, only with different $M$. So, let $M'=\lf \sqrt{n} \rf$ and let $M\to \infty$ slowly. Then $M'/n \to 0$ and $M'-M\to\infty$ and since $r \to r_\infty>1$, $r^{-(M'-M)} \to 0$. Thus from \eqref{eq:sdie-sum2}, the first sum on the RHS of \eqref{eq:sdie-sum1} $\sim 1/(1-1/r) \to r_\infty/\dlt_\infty$. To estimate the second sum, note that if $j\le M$ and $M' \le k \le X_\star-1$ then
$$\nu(j,k) = \nu(j,M')\nu(M',X_\star-1) \le \nu(M,M') \le (r(1-M'/n))^{-(M'-M)}.$$
We may assume $M=o(\sqrt{n})$, then for large $n$, the RHS above is $\le e^{-\dlt_\infty\sqrt{n}/2}$. Summing over at most $n$ such terms, the second sum on the RHS of \eqref{eq:sdie-sum1} is $\le ne^{-\dlt_\infty \sqrt{n}/2} = o(1)$, so combining the two, the LHS of \eqref{eq:sdie-sum1} $\to r_\infty/\dlt_\infty$ uniformly for $j\le M$.\\

Recall that $\nu(j-1,k)=\nu(j-1,j)\nu(j,k)$. Uniformly for $j\le M$, $\nu(j-1,j)\to 1/r_\infty$ and $(1-j/n) \to 1$. In addition, $1-\dlt_\infty/r_\infty = 1/r_\infty$. Thus uniformly over $j\le M$,
$$q_+^0(j) \sim r_\infty j(1-\dlt_\infty/r_\infty) = j \quad \text{and} \quad q_-^0(j) \sim j(1+\dlt_\infty) = r_\infty j.$$
Let $\tld X$ be as before, which has $\tld q_-(j) = j$ and $\tld q_+(j) \sim j/r_\infty$. In this case, $X_t = \tld X_{s(t)}$ with $\sup_{t \le \tau_m}s(t)/t \sim r_\infty$. Thus the result for $X$ is obtained from the one for $\tld X$ by changing time by the factor $r_\infty$. For $\tld X$, compare to the linear birth-and-death process $Z$ with parameter $1/r_\infty$. Following the proof of Theorem \ref{thm:disc} it suffices to show that for fixed $t>0$, $\P(\tau \ne \tau_M),\P(\tld X_s \ne Z_s \ \text{for some} \ s \le t \wedge \tau_M)=o(1)$. Since $\tld q_+(j) \le \tld q_-(j)$, $\tld X$ is a supermartingale, so the first statement is true provided $M\to\infty$. For $t< \tau_m$ the difference in rates between $X$ and $Z$, when they take the same value, is at most $\sup_{j \le M}\tld |q_+(j) - j/r_\infty| = o(M)$, so is $o(1)$ if $M\to\infty$ slowly enough. Thus for constant $X_0$ and fixed $t>0$,
$$\P(X_s \ne Z_s \ \text{for some} \ s \le t \wedge \tau_M \mid X_0) \le 1-e^{-o(1)t} = o(1)$$
which shows the second statement and completes the proof.
\end{proof}

\subsection{Metastability}

In this section we prove the metastability results of Theorem \ref{thm:meta}. In particular, in this section we condition on $A_\star = \{X_{\tau_\star} = X_\star\}$, the event denoted $A_n^\star$ in Theorem \ref{thm:meta}. On $A_\star$, $\tau_\star$ is the time of the first visit to $X_\star$, and the time to extinction can be broken into three epochs. Define the time of the last visit to $X_\star$ as
$$\tau_\star^o = \sup\{t\colon X_t=X_\star\},$$
setting $\tau_\star^o=-\infty$ if $X$ never reaches $X_\star$. On $A_\star$, the time to extinction $\tau=\inf\{t \colon X_t=0\}$ is the sum of the \emph{approach time} $\tau_\star$, the \emph{sojourn time} $\tau_\star^o - \tau_\star$ and the \emph{fall time} $\tau-\tau_\star^o$.
We proceed as follows:
\begin{enumerate}[noitemsep]
\item Estimate the expected sojourn time $E_\star^o := \E[ \ \tau_\star^o-\tau_\star \mid X_{\tau_\star}=X_\star \ ]$.
\item Show the expected approach time and fall time are $o(E_\star^o)$.
\item With a coupling argument, show that the rescaled sojourn time $(\tau_\star^o-\tau_\star)/E_\star^o$, \\
conditioned on $X_{\tau_\star}=X_\star$, converges in distribution to exponential with mean 1.
\end{enumerate}

Let us make the formal statements that will be the goal of this section.

\begin{lemma}[Expected sojourn time]\label{lem:soj} $\displaystyle E_\star^o \sim \sqrt{\frac{2\pi}{n}}\frac{r}{\dlt^2}\exp(n(\log r + 1/r-1))$.
\end{lemma}

\begin{lemma}[Approach time]\label{lem:appr} $\max_{j \in \{1,\dots,n\}}\E[\tau_\star \mid X_{\tau_\star}=X_\star, \ X_0=j] = o(E_\star^o)$.
\end{lemma}

\begin{lemma}[Fall time]\label{lem:fall} $\E[\tau-\tau_\star^o] = o(E_\star^o)$.
\end{lemma}

\begin{lemma}[Exponential limit]\label{lem:exp}
For each $t\ge 0$, $\P((\tau_\star^o-\tau_\star)/E_\star^o > t) \to e^{-t}$.
\end{lemma}

Note that the expected sojourn time and fall time does not depend on $X_0$; we have emphasized the uniformity of the estimate with respect to $X_0$ only in Lemma \ref{lem:appr}. Before proving these results, let's use them to prove the rest of Theorem \ref{thm:meta}.

\begin{proof}[Proof of metastability results of Theorem \ref{thm:meta}]
The extinction time is the sum of the approach, sojourn, and fall time: $\tau = \tau_\star + (\tau_\star^o-\tau_\star) + (\tau-\tau\star^o)$. Combining Lemmas \ref{lem:soj}, \ref{lem:appr} and \ref{lem:fall}, $\E[\tau \mid X_{\tau_\star}=X_\star] \sim E_\star^o$ which is the desired estimate of expected time to extinction.\\

Using Lemmas \ref{lem:appr} and \ref{lem:fall} and Markov's inequality, conditioned on $X_{\tau_\star}=X_\star$, with probability $1-o(1)$, $\tau/E_\star^o = (\tau_\star^o-\tau_\star)/E_\star^o + o(1)$. The exponential limit for $\tau$ then follows from Lemma \ref{lem:exp}.
\end{proof}

We begin by deriving some formulas for the expected time to hit $j+1$ or $j-1$, starting from $j$.

\subsubsection{Crossing times}\label{sec:cross-time}
Let $T_+(j) = \inf\{t \colon X_t = j + 1\}$ and $T_-(j) = \inf\{t \colon X_t=j-1\}$, and let $S_+(j) = \E[T_+(j) \mid X_0 =j]$ and $S_-(j) = \E[T_-(j) \mid X_0=j]$. Define also the conditioned versions
\begin{align*}
S_\pm^\star(j) &= \E[T_\pm(j) \mid X_0=j \ \text{and} \ X_{\tau_\star}=X_\star], \\
S_\pm^0(j) &= \E[T_\pm(j) \mid X_0=j \ \text{and} \ X_{\tau_\star}=0].
\end{align*}
We will need to estimate the following quantities:
\begin{enumerate}[noitemsep,label={\roman*)}]
\item $S_+^\star(j)$ for $j<X_\star$,
\item $S_-(j)$ for $1 \le j \le n$, and
\item $S_-^0(j)$ for $j<X_\star$.

\end{enumerate}
\textit{Quantity i).} For $1<j<X_\star$ a first step analysis gives
$$S_+^\star(j) = \frac{1}{q_+^\star(j)+ q_-^\star(j)} + \frac{q_-^\star(j)}{q_+^\star(j) + q_-^\star(j)}(S_+^\star(j-1) + S_+^\star(j)),$$
and solving gives
$$q_+^\star(j)S_+^\star(j) = 1 + q_-^\star(j)S_+^\star(j-1).$$
Following \cite[Sec 5.2]{rar-exp} we let $\pi(i,i) = 1$ and $\pi(i,j) = \prod_{k=i}^{j-1} q_+^\star(k)/q_-^\star(k+1)$ and multiply through by $\pi(1,j)$ above to obtain
$$q_+^\star(j)\pi(1,j)S_+^\star(j) = \pi(1,j) + q_+^\star(j-1)\pi(1,j-1)S_+^\star(j-1).$$
and then, since $q_+^\star(1)S_1^+ = 1$, we solve to obtain
$$S_+^\star(j) = \frac{1}{q_+^\star(j)\pi(1,j)}\sum_{i=1}^j \pi(1,i) = \frac{1}{q_+^\star(j)}\sum_{i=1}^j \frac{1}{\pi(i,j)}.$$
Using \eqref{eq:cond-trans}, for $i < j$
\begin{align*}
\frac{1}{\pi(i,j)} &= \prod_{k=i}^{j-1} \frac{q_-(k+1)}{q_+(k)}\frac{h_+(k)/h_+(k+1)}{h_+(k+1)/h_+(k)} \\
&= \left(\frac{h_+(i)}{h_+(j)}\right)^2 \frac{q_-(j)}{q_+(i)} \ \nu(i,j-1).
\end{align*}
This gives
\begin{align}\label{eq:sj-up}
S_+^\star(j) = \frac{q_-(j)}{q_+^\star(j)}\sum_{i=1}^j \nu(i,j-1) \frac{h_+(i)^2}{h_+(j)^2} \frac{1}{q_+(i)}.
\end{align}
\textit{Quantity ii).} For $1\le j< n$, a first step analysis gives the recursion
$$q_-(j)S_-(j) = 1 + q_+(j)S_-(j+1).$$
Multiplying through by $1/\pi(j,n)$ gives
$$\frac{q_-(j)}{\pi(j,n)}S_-(j) = \frac{1}{\pi(j,n)} + \frac{q_-(j+1)}{\pi(j+1,n)}S_-(j+1)$$
and since $S_-(n) = 1/q_-(n)$, the solution is
$$S_-(j) = \frac{\pi(j,n)}{q_-(j)}\sum_{i=j}^n \frac{1}{\pi(i,n)} = \frac{1}{q_-(j)}\sum_{i=j}^n \pi(j,i).$$
Writing in terms of $\nu$ we obtain
\begin{align}\label{eq:sj-dn}
S_-(j) = \frac{q_+(j)}{q_-(j)}\sum_{i=j}^n \nu(i-1,j)\frac{1}{q_-(i)}
\end{align}
\textit{Quantity iii).} For $1 \le j < X_\star$ a first step analysis gives
$$q_-^0(j)S_-^0(j) = 1 + q_+^0(j)S_-^0(j+1).$$
In this case it is better to express the solution using the function $\nu^0$, defined like $\nu$ except with $q_\pm^0$ in place of $q_\pm$. Since $S_-^0(X_\star-1) = 1/q_-^0(X_\star-1)$, following the same approach as above we find that
\begin{align}\label{eq:sj0-dn}
S_-^0(j) = \frac{q_+^0(j)}{q_-^0(j)}\sum_{i=j}^{X_\star-1}\nu^0(i-1,j)\frac{1}{q_-^0(i)}.
\end{align}

\subsubsection{Sojourn time}

Here we prove Lemma \ref{lem:soj}. We use the same basic approach as in \cite{thresh-lim}; the calculations become more delicate when $\dlt\to 0$. Define recursively $\tau_\star(0)=\tau_\star$ and for $0<k<K= \min\{ k >0 \colon X_{\tau_\star(k)}=0\}$,
$$\tau_\star(k+1)=\inf\{t > \tau_\star(k) \colon X_t \in \{0,X_\star\} \ \text{and} \ X_s \ne X_\star \ \text{for some} \ \tau_\star(k)<s<t\}.$$ For $k=1,\dots,K$ let $\rho_k = \tau_\star(k)-\tau_\star(k-1)$. Then, conditioned on $X_{\tau_\star} = X_\star$,
\begin{align}\label{eq:hold-time}
\tau_\star^0 - \tau_\star = \sum_{k=1}^{K-1} \rho_k.
\end{align}
By the strong Markov property, $K$ is geometric with success probability $p_\star =\P(X_{\tau_\star}= 0 \mid X_0=X_\star)$ and defining $\tau_\star^+ = \inf\{t \colon X_t=X_\star \ \text{and} \ X_s \ne X_\star \ \text{for some} \ 0 < s < t \}$,
$$\E[\rho_k \mid K>k] = \E[\tau_\star^+ \mid X_0=X_{\tau_\star^+}=X_\star].$$
Let $L_\star$ denote the above expectation. Applying Wald's equation to \eqref{eq:hold-time},
\begin{align}\label{eq:E-ht}
E_\star^o = L_\star(1/p_\star - 1).
\end{align}
Recall $V(x) = x(\log r - 1) - (1-x)\log(1-x)$ defined in Lemma \ref{lem:nu}, and $V_\star := V(x_\star)=\log r + 1/r-1$. Lemma \ref{lem:soj} follows immediately from the following estimates on $p_\star$ and $L_\star$.
\begin{lemma}\label{lem:ext-prob}
$p_\star \to 0$ and $\displaystyle p_\star \sim \frac{\dlt}{2\sqrt{r}}\exp(-nV_\star)$. 
\end{lemma}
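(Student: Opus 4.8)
The plan is to express $p_\star$ through the harmonic function $h_-$ by a single first-step analysis at $X_\star$, and then extract the asymptotics from the explicit formula \eqref{eq:hrmc} together with Lemma \ref{lem:nu}. Here $p_\star$ is the probability that, started from $X_\star$, the chain reaches $0$ before it returns to $X_\star$, i.e.\ the per-excursion success probability appearing in \eqref{eq:hold-time}. Since $X$ is a birth-and-death chain and $0<X_\star$, an excursion from $X_\star$ can end at $0$ only if its first step is downward to $X_\star-1$ and then the chain, started from $X_\star-1$, hits $0$ before $X_\star$; if the first step is upward, every path to $0$ must revisit $X_\star$ first. Hence
$$p_\star = \frac{q_-(X_\star)}{q(X_\star)}\,h_-(X_\star-1), \qquad h_-(X_\star-1) = \frac{\nu(X_\star-1)}{\sum_{k=0}^{X_\star-1}\nu(k)},$$
the second identity being \eqref{eq:hrmc} at $j=X_\star-1$. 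So it remains to estimate the three pieces.

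For the prefactor, $q_+(X_\star)/q_-(X_\star) = r(1-X_\star/n)$, and since $X_\star=\lf nx_\star\rf$ with $1-x_\star=1/r$ and $r$ bounded, this equals $1+O(1/n)$; hence $q_-(X_\star)/q(X_\star) = (1+r(1-X_\star/n))^{-1}\to 1/2$. For the numerator I would apply the precise estimate in Lemma \ref{lem:nu} with $a=0$ and $b=(X_\star-1)/n=x_\star+O(1/n)$: the factor $\sqrt{1/(1-b)}$ equals $\sqrt r\,(1+o(1))$ because $1-b=1/r+O(1/n)$; the factor $\exp(-nV(b))$ equals $\exp(-nV_\star)(1+o(1))$ because $V'(x_\star)=0$ and $V''$ stays bounded on the length-$O(1/n)$ interval joining $b$ to $x_\star$, so $nV(b)=nV_\star+O(1/n)$ by Taylor's theorem; and $E_n(0,b)\to 1$ because $|\log E_n(0,b)|\le(12n(1-b)^2b)^{-1}$, which is $O(1/n)$ when $\dlt\to\dlt_\infty>0$ and is $O((n\dlt)^{-1})=O((\sqrt n\,c)^{-1})=o(1)$ when $\dlt\to 0$. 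Thus $\nu(X_\star-1)\sim\sqrt r\,\exp(-nV_\star)$.

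For the denominator I would reuse the computation already carried out (with $j=0$) in the proof of Lemma \ref{lem:meta-thr}: choosing $M$ with $\dlt M\to\infty$ and $M=o(\dlt n)$ — possible since $c=\sqrt n\,\dlt\to\infty$, e.g.\ $M=\sqrt n\,(\sqrt n\,\dlt)^{1/2}$ — and splitting $\sum_{k=0}^{X_\star-1}\nu(k)$ at $M$, the head is squeezed between $\sum_{k<M}r^{-k}$ and $\sum_{k<M}(r(1-M/n))^{-k}$ by \eqref{eq:nu-bnd}, both asymptotic to $(1-1/r)^{-1}$, while the tail $\sum_{k=M}^{X_\star-1}\nu(k)$ is $o((1-1/r)^{-1})$ by the upper bound in Lemma \ref{lem:nu} and the concavity and positivity of $V$ on $(0,x_\star)$, exactly as in \eqref{eq:nu-sum0} and \eqref{eq:nu-bnd3}. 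Hence $\sum_{k=0}^{X_\star-1}\nu(k)\sim(1-1/r)^{-1}=r/\dlt$, and combining the three estimates,
$$p_\star \sim \frac12\cdot\frac{\sqrt r\,\exp(-nV_\star)}{r/\dlt} = \frac{\dlt}{2\sqrt r}\,\exp(-nV_\star),$$
as claimed.

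The argument is essentially an assembly of facts already in hand, so I do not expect a serious obstacle; the one point demanding genuine care is uniformity of the error terms across the two regimes $\dlt=o(1)$ and $\dlt\to\dlt_\infty>0$ at once — in particular, $E_n(0,b)\to1$ relies only on $n\dlt\to\infty$, which is implied by (though weaker than) the standing hypothesis $c\to\infty$ — together with checking that the head/tail split of $\sum_{k=0}^{X_\star-1}\nu(k)$ is legitimate for the chosen $M$, both of which are supplied verbatim by the estimates developed for Lemma \ref{lem:meta-thr}.
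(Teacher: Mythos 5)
Your proposal is correct and follows essentially the same route as the paper: the same first-step decomposition $p_\star = \frac{q_-(X_\star)}{q(X_\star)}h_-(X_\star-1)$, the same use of \eqref{eq:hrmc}, Lemma \ref{lem:nu} (with $a=0$, $b=(X_\star-1)/n$, a Taylor expansion around $x_\star$ using $V'(x_\star)=0$) for the numerator, and the denominator estimate $\sum_{k=0}^{X_\star-1}\nu(k)\sim r/\dlt$ recycled from the proof of Lemma \ref{lem:meta-thr}. The uniformity checks you flag (validity of $E_n(0,b)\to 1$ and the head/tail split in both regimes of $\dlt$) are handled the same way in the paper, so no gap remains.
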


\begin{lemma}\label{lem:Lstst}
$\displaystyle L_\star \sim \frac{1}{\sqrt{n}\,\dlt}\,\sqrt{\frac{\pi r}{2}}$.
\end{lemma}

First we prove Lemma \ref{lem:ext-prob}. 

\begin{proof}[Proof of Lemma \ref{lem:ext-prob}]
First we show that if $p_\star \sim (\dlt/2\sqrt{r})\exp(-nV_\star)$ then $p_\star \to 0$, then we establish the estimate. Since $r\ge 1$, $V_\star \ge 0$. If $\dlt \to 0$ it follows that $p_\star \to 0$. If $\dlt \to \dlt_\infty>0$ then $V_\star \to V_\infty = \log r_\infty + 1/r_\infty -1>0$ so $\exp(-nV_\star) \to 0$ and again $p_\star \to 0$.\\

If the first jump of $X$ is to $X_\star+1$ then $X_{\tau_\star}=X_\star$. So, conditioning on the first jump,
$$p_\star = \frac{q_-(X_\star)}{q(X_\star)}h_-(X_\star -1).$$
By definition of $X_\star$, $q_+(X_\star)\sim q_-(X_\star)$, so if $X_0=X_\star$ then its first jump is to $X_\star-1$ with probability $1/2+o(1)$. Thus, in the notation of \eqref{eq:hrmc} it is enough to show that
$$h_-(X_\star-1) \sim \frac{\dlt}{\sqrt{r}}\exp(-nV_\star).$$
We have
$$h_-(X_\star-1) = \frac{\nu(X_\star-1)}{\sum_{k=0}^{X_\star-1} \nu(k)},$$
and we begin by estimating the numerator, using Lemma \ref{lem:nu} with $a=0$ and $b = (X_\star-1)/n$. Notice that $|b-x_\star| \le 2/n$ and that $x_\star = \dlt/r = \dlt/(1+\dlt)$, so the assumption $\limsup_n \dlt<\infty$ implies $\limsup_n b < 1$. Since $V'(x_\star)=0$, $b<x_\star$ and $x \mapsto |V''(x)|$ is increasing, it follows that $|V(b)-V(x_\star)| \le \frac{1}{2}V''(x_\star)(2/n)^2 = O(1/n^2)$. Since $n b \ge n(x_\star-2/n) = n \dlt/r-2 \to \infty$ and $\limsup_n 1-b >0$, it follows that $n(1-b)^2(b-a) \to\infty$. Since $1/r=1-x_\star$, $1-b = 1-x_\star+O(1/n) = (1 - x_\star)(1+o(1)) \sim 1/r$. Putting it together,
\begin{align*}
\nu(X_\star-1) \sim \sqrt{r}\exp(-nV(x_\star)) \\
\end{align*}
The denominator is estimated in the proof of Lemma \ref{lem:meta-thr}; in both cases
$$\sum_{k=0}^{X_\star-1} \nu(k) \sim \frac{1}{1-1/r} = r/\dlt$$
and the result follows.
\end{proof}

In order to estimate $L_\star$ we will need additional information about the function $\nu$. First, extend the domain of $\nu$ by defining $\nu(k,j)=1/\nu(j,k)$ for $0\le j< k<n$. An equivalent, unifying definition is given by the formula
$$\nu(j,k) = \frac{\prod_{i=j+1}^nq_-(i)/q_+(i)}{\prod_{i=k+1}^nq_-(i)/q_+(i)}.$$
Say that $f(n,\lambda) \sim g(n,\lambda)$ uniformly over $\lambda \in A$ if $\lim_{n\to\infty} \sup_{\lambda \in A} \left|\log \left( \frac{f(n,\lambda)}{g(n,\lambda)} \right)\right| = 0$.

\begin{lemma}\label{lem:nu2}
Uniformly over $|\sigma| \le n^{1/8}$,
$$\nu(X_\star-\sigma \sqrt{n},X_\star) \sim \exp(-\sigma^2r/2).$$
Moreover, for $1 \le \sigma \le n^{1/8}$,
$$\sum_{0 \le j \le n-1 \colon |j-X_\star| \ge \sigma \sqrt{n}}\nu(j,X_\star) \le (2+o(1))\exp(-\sigma^2r/2)\frac{\sqrt{n}}{\sigma r}.$$
\end{lemma}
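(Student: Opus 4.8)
The plan is to work directly with the product $\nu(j,k)=\prod_{i=j+1}^{k}q_-(i)/q_+(i)=\prod_{i=j+1}^{k}\bigl(r(1-i/n)\bigr)^{-1}$ (extended by $\nu(k,j)=1/\nu(j,k)$), using only two facts: $X_\star=nx_\star-\theta$ with $\theta=\{nx_\star\}\in[0,1)$, and $1-x_\star=1/r$. Together these give, for $i$ within a window of width $w$ of $X_\star$,
$$r(1-i/n)=1+\frac{r}{n}\bigl(X_\star-i+\theta\bigr),\qquad \log\!\bigl(r(1-i/n)\bigr)=\frac{r}{n}(X_\star-i+\theta)+O\!\bigl((w/n)^2\bigr),$$
with implied constants uniform as long as $i/n$ stays bounded away from $1$, which holds throughout the relevant range of indices since $\dlt$ is bounded. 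All the work will be bookkeeping of error terms; nothing conceptual is needed beyond these expansions and two geometric-series bounds.

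\textbf{Step 1: the pointwise estimate.} Take $0<\sigma\le n^{1/8}$ and $m=\lf\sigma\sqrt n\rf$; negative $\sigma$ is handled symmetrically via $\nu(k,j)=1/\nu(j,k)$ (using indices $i>X_\star$ instead). Summing the expansion above over $i=X_\star-m+1,\dots,X_\star$, i.e. over $\ell=X_\star-i\in\{0,\dots,m-1\}$,
$$-\log\nu(X_\star-m,X_\star)=\frac{r}{n}\sum_{\ell=0}^{m-1}(\ell+\theta)+O\!\bigl(m^3/n^2\bigr)=\frac{r m^2}{2n}+O\!\bigl(m/n+m^3/n^2\bigr).$$
Since $m=\sigma\sqrt n+O(1)$, the main term is $r\sigma^2/2+O(\sigma/\sqrt n)=r\sigma^2/2+o(1)$ and, because $\sigma\le n^{1/8}$, the errors are $O(n^{-3/8})+O(n^{-1/8})=o(1)$, all uniformly in $\sigma$; this is the first claim. (For $\sigma\sqrt n\to\infty$ one could instead quote the precise estimate of Lemma~\ref{lem:nu} and Taylor-expand $V$ about $x_\star$ using $V'(x_\star)=0$, $V''(x_\star)=-r$; the leftover range $\sigma\sqrt n=O(1)$ is trivial since then $\nu(X_\star-m,X_\star)=1+o(1)$.)

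\textbf{Step 2: the tail sum.} For $1\le\sigma\le n^{1/8}$ put $m=\lceil\sigma\sqrt n\rceil\ge 1$; the integer points with $|j-X_\star|\ge\sigma\sqrt n$ split into $\{j\le X_\star-m\}$ and $\{j\ge X_\star+m\}$. For the left part, multiplicativity gives $\sum_{0\le j\le X_\star-m}\nu(j,X_\star)=\nu(X_\star-m,X_\star)\sum_{0\le j\le X_\star-m}\nu(j,X_\star-m)$. Every $i\le X_\star-m$ has $1-i/n\ge 1-(X_\star-m)/n\ge 1/r+m/n$, so $r(1-i/n)\ge 1+rm/n$ and each factor of $\nu(j,X_\star-m)$ is at most $(1+rm/n)^{-1}$; hence $\sum_{j\le X_\star-m}\nu(j,X_\star-m)\le\sum_{\ell\ge0}(1+rm/n)^{-\ell}=1+\frac{n}{rm}\le 1+\frac{\sqrt n}{r\sigma}$, which is $(1+o(1))\sqrt n/(r\sigma)$ since $\sqrt n/(r\sigma)\ge n^{3/8}/r\to\infty$ uniformly. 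Combined with Step~1 (note $m/\sqrt n=\sigma+O(1/\sqrt n)$, so $\nu(X_\star-m,X_\star)=(1+o(1))\exp(-\sigma^2 r/2)$), the left part is $\le(1+o(1))\exp(-\sigma^2 r/2)\,\sqrt n/(r\sigma)$. The right part is symmetric: $\sum_{j\ge X_\star+m}\nu(j,X_\star)=\nu(X_\star+m,X_\star)\sum_{j\ge X_\star+m}\nu(j,X_\star+m)$, and for $i\ge X_\star+m+1$ one has $r(1-i/n)\le 1-rm/n<1$, so $\sum_{j\ge X_\star+m}\nu(j,X_\star+m)\le\sum_{\ell\ge0}(1-rm/n)^\ell=\frac{n}{rm}\le\frac{\sqrt n}{r\sigma}$, giving the same bound for this part. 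Adding the two yields $(2+o(1))\exp(-\sigma^2 r/2)\,\sqrt n/(r\sigma)$, as claimed.

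\textbf{Expected main obstacle.} There is no genuine difficulty; the work is entirely in keeping every error estimate uniform over the full range $1\le\sigma\le n^{1/8}$, which is precisely why one tracks $m=\sigma\sqrt n+O(1)$ rather than assuming $\sigma\sqrt n\in\N$, and why the exponent $1/8$ is exactly right ($m^3/n^2\to0$ and $rm/n\to0$, keeping the geometric-series bounds non-degenerate and their sums $\sim\sqrt n/(r\sigma)$). A minor technical point is confirming that the indices $X_\star\pm\sigma\sqrt n$ stay in $\{0,\dots,n-1\}$ and bounded away from $n$ over this range, so that $1-i/n$ is bounded below; this holds in the regime in which the lemma is applied.
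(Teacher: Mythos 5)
Your proof is correct; the interesting comparison is in the first claim. For the tail sum you do exactly what the paper does: factor $\nu(j,X_\star)=\nu(j,X_\star\mp m)\,\nu(X_\star\mp m,X_\star)$ at the window edge, bound each remaining factor by the worst one ($r(1-i/n)\ge 1+rm/n$ on the left, $\le 1-rm/n$ on the right), and sum a geometric series whose ratio gives the $\sqrt n/(\sigma r)$ factor on each side, hence the $2$. For the pointwise estimate, however, you take a more elementary route: you use the exact identity $r(1-i/n)=1+\tfrac rn(X_\star-i+\theta)$ together with $\log(1+u)=u+O(u^2)$ and sum an arithmetic series, which handles the whole range $|\sigma|\le n^{1/8}$ uniformly in one computation (the errors $O(m/n+m^3/n^2)$ are $o(1)$ precisely because $\sigma\le n^{1/8}$). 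The paper instead splits the range: for $|j-X_\star|\le n^{1/4}$ it uses the crude bound $\log(r(1-j/n))=O(n^{-3/4})$ termwise, and for $n^{1/4}<|j-X_\star|\le n^{5/8}$ it invokes the trapezoidal estimate of Lemma \ref{lem:nu} (which needs $|j-X_\star|\to\infty$ to make the correction factor $E_n\to1$) plus a Taylor expansion of $V$ at $x_\star$ using $V'(x_\star)=0$, $V''(x_\star)=-r$. Your version avoids Lemma \ref{lem:nu} and the range split entirely and is self-contained; the paper's version buys consistency with the $V$-function machinery it has already set up and reuses elsewhere. Both give the same uniform statement, and your bookkeeping of $m=\lfloor\sigma\sqrt n\rfloor$ versus $\sigma\sqrt n$, of negative $\sigma$ via $\nu(k,j)=1/\nu(j,k)$, and of $x_\star$ bounded away from $1$ (since $\dlt$ is bounded) is all sound.
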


\begin{proof}
Since $r(1-x_\star)=1$ and $x\mapsto \log(r(1-x))$ is differentiable at $x_\star$, it follows that uniformly over $j$ such that $|j-X_\star| \le n^{1/4}$, $\log(r(1-j/n)) = O(n^{-3/4})$. Summing at most $n^{1/4}$ terms, $\log \nu(j,X_\star) = O(1/\sqrt{n})$. This proves the first statement restricted to $|\sigma| \le n^{-1/4}$.\\

If $n^{1/4} < |j-X_\star| \le n^{5/8}$, then since $\limsup_n x_\star <1$, $1-j/n \sim 1-X_\star/n$. Since $|j-X_\star| > n^{1/4}\to \infty$, by Lemma \ref{lem:nu}, uniformly over such $j$, $E_n(j/n,X_\star/n) \to 1$ if $j<X_\star$ and $E_n(X_\star/n,j/n) \to 1$ if $j \ge X_\star$, so
\begin{equation}\label{eq:nu1}
\nu(j,X_\star) \sim \exp(-n(V(X_\star/n)-V(j/n));
\end{equation}
note the last expression is valid not only for $j<X_\star$ but also for $j\ge X_\star$ under the extended definition of $\nu$. Since $\limsup_n 1-x_\star>0$, $V'''(x)=1/(1-x)^2$ is bounded on $[0,x_\star +o(1))]$, and recall $V'(x_\star)=0$ and $V''(x_\star) = -r$. Thus, if $|\sigma| \le n^{1/8}$, using a second order Taylor approximation we find that
\begin{equation}\label{eq:V1}
V(x_\star + \sigma /\sqrt{n}) - V(x_\star) = -\sigma^2 r/2n + O(n^{-9/8}).
\end{equation}
In particular, $V(X_\star/n)-V(x_\star) = O(n^{-9/8})$. Combining this with \eqref{eq:nu1} and \eqref{eq:V1}, the first statement is proved for the remaining values of $|\sigma|$, namely, $(n^{-1/4},n^{1/8}]$.\\

Next, for $j<k<X_\star$, since $\log(r(1-x_\star))=0$ and $x\mapsto \log(r(1-x))$ is decreasing,
$$-\log \nu(j,k) = \sum_{i=j+1}^k \log(r(1-i/n)) \ge (k-j)\log(r(1-k/n))$$
and thus $\nu(j,k) \le (r(1-k/n))^{-(k-j)}$. Fix $\sigma$ with $1 \le \sigma \le n^{1/8}$ and observe that for any $j,k,\ell$, $\nu(j,\ell)=\nu(j,k)\nu(k,\ell)$. Using this property with $k=X_\star-\sqrt{n}\sigma$ and $\ell=X_\star$ then bounding the sum by a geometric series,
$$\sum_{j=0}^{X_\star-\sqrt{n}\sigma}\nu(j,X_\star) \le \frac{(1+o(1))\exp(-\sigma^2r/2)}{1-(r(1-(X_\star-\sqrt{n}\sigma)/n))^{-1}}.$$
Since $r(1-X_\star/n) = r(1-x_\star)+O(1/n) = 1 + O(1/n)$, the denominator is
$$1-(1 + O(1/n) + \sigma r / \sqrt{n})^{-1} = 1-(1-(1+o(1))\sigma r/\sqrt{n}) = (1+o(1))n^{-1/2}\sigma r,$$
so
$$\sum_{j=0}^{X_\star-\sqrt{n}\sigma-1}\nu(j,X_\star) \le (1+o(1))\exp(-\sigma^2r/2)\frac{\sqrt{n}}{\sigma r}.$$
From the other end, if $X_\star<k<j$ then since $\log(r(1-i/n)) \le \log(r(1-x_\star))=0$ if $i>X_\star$, we have
$$\log \nu(j,k) = -\log \nu(k,j) = \sum_{i=k+1}^j \log(r(1-i/n)) \le (j-k)\log(r(1-k/n))$$
and thus $\nu(j,k) \le (r(1-k/n))^{j-k}$. By an analogous argument we find that
$$\sum_{j=X_\star+\sqrt{n}\sigma}^{n-1}\nu(j,X_\star) \le \frac{(1+o(1))\exp(-\sigma^2r/2)}{1-r(1-(X_\star+\sqrt{n}\sigma)/n)} \le 
(1+o(1))\exp(-\sigma^2r/2)\frac{\sqrt{n}}{\sigma r}.$$
Combining the two estimates completes the proof.
\end{proof}

We are now ready to prove Lemma \ref{lem:Lstst}.

\begin{proof}[Proof of Lemma \ref{lem:Lstst}]
Let $S_+^\star(j)$, $S_-(j)$ be as in Section \ref{sec:cross-time}. Conditioning on the first step,
$$L_\star = \frac{1}{q_+(X_\star) + q_-(X_\star)}\left( 1 + q_-(X_\star) S_+^\star(X_\star-1) + q_+(X_\star)S_-(X_\star+1) \right).$$
We have $q_-(X_\star)=X_\star$ and since $r(1-X_\star/n) = 1 + O(1/n)$, $q_+(X_\star) \sim X_\star$, so
$$L_\star \sim \frac{1}{2}\left(1/X_\star +  S_+^\star(X_\star-1) + S_-(X_\star+1) \right).$$
Since $h_+(X_\star)=1$ and $h_+(X_\star-1) \to 1$, $q_+^\star(X_\star -1) \sim q_+(X_\star-1)$. By definition of $X_\star$, $q_+(X_\star-1) \sim q_-(X_\star-1)$. In addition, by Lemma \ref{lem:nu2} $\nu(i,X_\star)=\nu(i,X_\star-2)\nu(X_\star-2,X_\star) \sim \nu(i,X_\star-2)$ uniformly over $i$. Thus, using \eqref{eq:sj-up} with $j=X_\star-1$,
\begin{align}\label{eq:s-star1}
S_+^\star(X_\star-1) \sim \sum_{i=1}^{X_\star -1} \nu(i,X_\star)
\frac{h_+(i)^2}{q_+(i)}.
\end{align}
We will estimate the bulk of the sum in \eqref{eq:s-star1}, finding that it tends to a then show the rest of the sum, as well as $S_-(X_\star+1)$, are negligible in comparison. Recall the notation $c=\sqrt{n}\dlt$, noting that $c\to\infty$ by assumption. We then have $X_\star \sim n\dlt = \sqrt{n}c$.\\

Since $r(1-x_\star)=1$, $q_+(i) = ri(1-i/n) = i(1+r(x_\star-i/n))$, so $q_+(i) \sim X_\star$ uniformly over $|i-X_\star| \le C$ provided $C=o(X_\star)$ or equivalently $C=o(\sqrt{n}c)$. By Lemma \ref{lem:meta-thr}, $h_+(i) \to 1$ uniformly over $i\ge M$ provided $\dlt M \to \infty$. If $M = X_\star - C$ with $C=o(\sqrt{n}c)$ then $\dlt M \sim \dlt X_\star \sim c^2/r \to \infty$. Thus, if we define $\Sigma = \sqrt{c} \wedge n^{1/8}$, then since $\sqrt{nc} = o(\sqrt{n}c)$,
$$\sum_{i= \lf X_\star - \Sigma\sqrt{n} \rf +1}^{X_\star-1} \nu(i,X_\star)\frac{h_+(i)^2}{q_+(i)}\quad \sim \quad \frac{1}{X_\star} \sum_{i= \lf X_\star - \Sigma\sqrt{n} \rf +1 }^{X_\star-1} \nu(i,X_\star).$$
Using Lemma \ref{lem:nu2} and the fact that $\Sigma \to\infty$ and $\Sigma \le n^{1/8}$,
$$\sum_{i= \lf X_\star - \Sigma\sqrt{n} \rf +1}^{X_\star-1} \nu(i,X_\star) \sim \sqrt{n}\int_{-\infty}^0 e^{-\sigma^2r/2}d\sigma = \sqrt{\frac{n\pi}{2r}}.$$
Assuming the rest of the sum is negligible in comparison, since $X_\star \sim n\dlt/r$ we then have
\begin{align}\label{eq:S-scale}
S_+^\star(X_\star-1) \sim \frac{\sqrt{n}}{X_\star}\frac{\pi}{2r} = \frac{1}{\sqrt{n}\dlt}\sqrt{\frac{\pi r}{2}}.
\end{align}
So, we now show the rest of the sum in the brackets in \eqref{eq:s-star1} is $o(\sqrt{n})$; we may of course ignore the 1. Using the trivial estimate $q_+(i) \ge 1$ and $h_+(i) \le 1$ for $i\ge 1$ as well as $X_\star \le n\dlt$ and Lemma \ref{lem:nu2} with $\sigma=n^{1/8}$, if $\sqrt{c} \ge n^{1/8}$ then the rest of the sum is
$$\sum_{i=1}^{\lf X_\star - n^{5/8} \rf}\nu(i,X_\star-2) \frac{h_+(i)^2}{q_+(i)}X_\star \le (2+o(1))e^{-n^{1/4}r/2}\frac{n^{3/8}}{r} \, \frac{n\dlt}{r} =O(1)= o(\sqrt{n}).$$
If $\sqrt{c} < n^{1/8}$ we treat the remainder in two parts, beginning with $i \le 1/\dlt = \sqrt{n}/c$. $X$ is dominated by the bbp $Z$ with $Z_0=X_0$ and parameter $r$, and using \eqref{eq:bbp-rho}, since $0$ is absorbing for $Z$,
$$\P(Z_t=0 \ \text{for some} \ t>0 \mid Z_0) = \lim_{t\to\infty}\rho(t)^{Z_0} = \min(1,1/r)^{Z_0} = r^{-Z_0}$$
since $r>1$ by assumption in this section. If $Z_t = i$ for $i>0$, then with probability $p_i>0$, $Z$ hits $0$ before it returns to $i$. Thus $Z$ visits any $i>0$ at most geometric$(p_i)$ number of times, which is almost surely finite and implies that a.s., $\lim_{t\to\infty}Z_t \in \{0,\infty\}$. Therefore
$$\P(\lim_{t\to\infty}Z_t =\infty \mid Z_0) = 1 - r^{-Z_0} = 1-(1+\dlt)^{-Z_0} \le \dlt Z_0.$$
The last inequality follows from $(1+\dlt)^{-Z_0} \ge e^{-\dlt Z_0} \ge 1-\dlt Z_0$, where we used the estimate $1+u \le e^u$ for $u \in \R$ which follows from convexity of $u\mapsto e^u$. Since $Z$ dominates $X$,
\begin{align}\label{eq:hp-ub}
h_+(i) \le \P(\lim_{t\to\infty}Z_t =\infty \mid Z_0=i) \le \dlt i.
\end{align}
Since $h_+(i) \le 1$, $h_+(i)^2 \le h_+(i)$, and since $X_\star \le n\dlt$ and $q_+(i) \ge i$ for $i<X_\star$, $h_+(i)^2 X_\star /q_+(i) \le \dlt^2n = c^2$. Using Lemma \ref{lem:nu2} with $\sigma = (X_\star - \lf \sqrt{n}/c \rf)/\sqrt{n} \sim c-1/c = c-o(1)$,
$$\sum_{i=1}^{\lf \sqrt{n}/c \rf} \nu(i,X_\star)\frac{h_+(i)^2}{q_+(i)}X_\star \le (2+o(1))e^{-\sigma^2 r/2}\frac{\sqrt{n}}{\sigma r} c^2 \sim 2e^{-(c-o(1))^2r/2} \frac{c}{r} \sqrt{n} = o(\sqrt{n}),$$
since $c\to\infty$ and thus $(2c/r)e^{-(c-o(1))^2r/2} \to 0$ as $n\to\infty$. Using again the trivial estimate $h_+(i)\le 1$ and $q_+(i) \ge i$, and also $\nu(i,X_\star) \le \nu(X_\star-\sqrt{nc},X_\star) \sim e^{-c r/2}$ for $i \le X_\star-\sqrt{nc}$,
\begin{align*}
\sum_{i=\lf \sqrt{n}/c \rf +1}^{\lf X_\star-\sqrt{nc} \rf}\nu(i,X_\star)\frac{h_+(i)^2}{q_+(i)}X_\star 
& \le (1+o(1))e^{-cr/2}\sqrt{n}c\sum_{i=\lf \sqrt{n}/c \rf+1}^{\lf X_\star-\sqrt{nc} \rf}\frac{1}{i}, \\
& \sim \sqrt{n}ce^{-cr/2}(\log(\sqrt{n}c)-\log(\sqrt{n}/c)) \\
& \sim \sqrt{n}(2c\log(c)e^{-cr/2}) = o(\sqrt{n}).
\end{align*}
This completes the estimation of the sum from \eqref{eq:s-star1}.\\
For $S_-(X_\star+1)$, using \eqref{eq:sj-dn} with $j=X_\star+1$ and simplifying as before,
$$S_-(X_\star+1) \sim \sum_{i=X_\star+1}^n \nu(i-1,X_\star)\frac{1}{q_-(i)}.$$
Since this case is similar to the one before, we just give an outline. Breaking up the sum in the same way, the bulk of the sum is estimated in the same way as before and gives the same result. To bound the remainder, it suffices to note that $q_-(i)=i \ge X_\star$ for $i \ge X_\star$, then use Lemma \ref{lem:nu2} directly, noting that $2e^{-\Sigma^2 r/2}/(\Sigma r) \to 0$. Since $L_\star$ scales like the average of the two values, the result follows from \eqref{eq:S-scale}.
\end{proof}

\subsubsection{Approach time}

We now prove Lemma \ref{lem:appr}. For this and for Lemma \ref{lem:fall}, we first derive a more concrete lower bound on the expected sojourn time $E_\star^o$, using the formula $E_\star^o \sim \sqrt{2\pi/n}(r/\dlt^2)\exp(n V_\star)$ of Lemma \ref{lem:soj}, where $V_\star=\log r + 1/r-1$.
\enumalph
\item If $\dlt \to 0$ then
$$V_\star = \log(1+\dlt)+1/(1+\dlt)-1 = \dlt-\dlt^2/2 + 1-\dlt + \dlt^2-1+O(\dlt^3) = \dlt^2/2 + O(\dlt^3) \sim \dlt^2/2$$
and since $c=\sqrt{n}\dlt$ and $r\to 1$, $\sqrt{2\pi/n}(r/\dlt^2) = \sqrt{2\pi}(r/c \dlt)$ and $nV_\star \sim c^2/2$.\\
\item If $\dlt\to\dlt_\infty>0$ then $V_\star \to V_\infty = \log r_\infty + 1/r_\infty -1>0$.
\enumend 
Thus in either case,
\begin{align}\label{eq:Estar-est}
E_\star^o \ge \begin{cases} \displaystyle(1-o(1))\frac{\sqrt{2\pi}}{\dlt}\,\frac{1}{c} \exp((1-o(1))c^2/2) & \text{if} \quad \dlt \to 0, \vspace{10pt} \\
\sqrt{2\pi/n}(r_\infty/\dlt_\infty^2)e^{(1-o(1))V_\infty n} & \text{if} \quad \dlt \to \dlt_\infty>0.\end{cases}
\end{align}
Using \eqref{eq:Estar-est}, the following result implies Lemma \ref{lem:appr}, since $c^2\log c = o((1/c)\exp((1-o(1))c^2/2))$ in the case $\dlt \to 0$ and $n\log n = o(\exp((1-o(1))V_\infty n))$ in the case $\dlt \to \dlt_\infty>0$.

\begin{lemma}\label{lem:equil-time}
$$\max_{j \in \{1,\dots,n\}}\E[\tau_\star \mid X_{\tau_\star}=X_\star, \ X_0=j] = 
\begin{cases} O((1/\dlt)c^2\log(c)) & \text{if} \ \dlt \to 0 \\
O(n\log(n)) & \text{if} \ \dlt \to \dlt_\infty>0.
\end{cases}$$
\end{lemma}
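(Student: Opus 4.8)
The first step is a skip-free decomposition of the approach time. Since $X$ changes by $\pm 1$, the $h_+$-transformed chain describing $X$ conditioned on $\{X_{\tau_\star}=X_\star\}$ — with rates $q_\pm^\star$ from \eqref{eq:cond-trans} on $\{1,\dots,X_\star\}$ and absorbed at $X_\star$ — is skip-free upward, so its first passage from $j$ to $X_\star$ is a sum of independent level crossings, giving $E[\tau_\star \mid X_0=j,\ X_{\tau_\star}=X_\star]=\sum_{k=j}^{X_\star-1}S_+^\star(k)$ for $j<X_\star$. For $j>X_\star$ the chain is skip-free downward, so $X_{\tau_\star}=X_\star$ automatically and $E[\tau_\star \mid X_0=j]=\sum_{k=X_\star+1}^{j}S_-(k)$, while for $j=X_\star$ the time is $0$. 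These sums are monotone in $j$, so the maximum over $j\in\{1,\dots,n\}$ is the larger of $\sum_{k=1}^{X_\star-1}S_+^\star(k)$ and $\sum_{k=X_\star+1}^{n}S_-(k)$, and it suffices to bound these two sums.

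For $\sum_{k<X_\star}S_+^\star(k)$ I would start from \eqref{eq:sj-up}. Since $h_+$ is nondecreasing we have $q_+^\star(j)\ge q_+(j)$, hence $q_-(j)/q_+^\star(j)\le q_-(j)/q_+(j)=(r(1-j/n))^{-1}\le 1$ for $j<X_\star$, and $h_+(i)\le h_+(j)$ for $i\le j$; retaining the ratio $h_+(i)^2/h_+(j)^2$ only where needed, I would split the range of $j$ (and the inner sum over $i$) into three regimes. In the \emph{bulk} $X_\star-j\lesssim\sqrt n\,(\sqrt c\wedge n^{1/8})$ one has $h_+\to 1$ (Lemma \ref{lem:meta-thr}) and $q_+(i)\asymp X_\star$ for the relevant $i$, while $\nu(i,j-1)$ is controlled off the diagonal by the Gaussian estimate and tail bound of Lemma \ref{lem:nu2}; this gives $S_+^\star(j)=O(\sqrt n/X_\star)$ with extra decay as $j$ moves away from $X_\star$, summing to $O(\dlt^{-1}\log c)$. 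In the \emph{intermediate} regime $1/\dlt\lesssim j$ (below the bulk) one has $h_+(i)\to 1$, $q_+(i)\asymp ri$, and $\nu(i,j-1)$ geometric (Lemma \ref{lem:nu}), so $S_+^\star(j)=O(1/(j\dlt))$ and the sum is $O(\dlt^{-1}\log(\dlt X_\star))=O(\dlt^{-1}\log c)$. In the \emph{branching} regime $j=O(1/\dlt)$ one must keep $h_+(i)^2/h_+(j)^2$: from $h_+(i)\le\dlt i$ in \eqref{eq:hp-ub} together with $h_+(1)=(\sum_k\nu(k))^{-1}\sim\dlt/r$ and monotonicity, $h_+(i)\asymp\dlt i$ on this scale, which offsets the small $1/q_+(i)$ against the slowly-decaying $\nu(i,j-1)$ to give $S_+^\star(j)=O(1)$, hence $O(1/\dlt)$ in total. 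Adding the three regimes, $\sum_{k<X_\star}S_+^\star(k)=O(\dlt^{-1}\log c)$.

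For $\sum_{k>X_\star}S_-(k)$ I would use \eqref{eq:sj-dn}: for $j>X_\star$, $q_+(j)/q_-(j)=r(1-j/n)<1$, $q_-(i)=i\ge X_\star$, and $\nu(i-1,j)\le(r(1-j/n))^{i-1-j}$ for $i>j$, so summing the geometric series gives $S_-(j)\lesssim j^{-1}+\tfrac{n}{r\,j\,(j-X_\star)}$, using $1-r(1-j/n)=r(j-X_\star)/n+O(1/n)$. Summing over $j$ far from $X_\star$ via $\tfrac{1}{j(j-X_\star)}=\tfrac{1}{X_\star}(\tfrac{1}{j-X_\star}-\tfrac{1}{j})$, the telescoping part contributes $O(\dlt^{-1}\log c)$ and the $\sum_j 1/j$ part only $O(\log(1/\dlt))=o(\dlt^{-1})$; for $j$ within $\sqrt n\,(\sqrt c\wedge n^{1/8})$ of $X_\star$ one uses Lemma \ref{lem:nu2} as in the bulk case, contributing $O(\dlt^{-1}\log c)$. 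Hence $\sum_{k>X_\star}S_-(k)=O(\dlt^{-1}\log c)$ too, and the maximum in the statement is $O(\dlt^{-1}\log c)$ — comfortably within $O(\dlt^{-1}c^2\log c)$ when $\dlt\to 0$ and within $O(n\log n)$ when $\dlt\to\dlt_\infty>0$. For the final assertion, \eqref{eq:E-ht} gives $E_\star^o=L_\star(1/p_\star-1)\sim L_\star/p_\star$ since $p_\star\to 0$, so Lemmas \ref{lem:ext-prob} and \ref{lem:Lstst} (see \eqref{eq:Estar-est}) give $E_\star^o\asymp(\sqrt n\,\dlt^2)^{-1}e^{nV_\star}$, and since $nV_\star\sim c^2/2$ if $\dlt\to 0$ and $nV_\star\to\infty$ linearly if $\dlt\to\dlt_\infty>0$, with $c\to\infty$ throughout, this dominates the polynomial bound just obtained; thus the approach time is $o(E_\star^o)$.

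The main obstacle I anticipate is the branching regime in the bound on $\sum_{k<X_\star}S_+^\star(k)$: there the $h_+$-transform substantially reweights the dynamics (the conditioned process behaves like a supercritical branching process conditioned to survive, whose population grows roughly linearly, not exponentially, while below $1/\dlt$), so controlling $S_+^\star(j)$ requires the two-sided estimate $h_+(i)\asymp\dlt i$ on the scale $i=O(1/\dlt)$ and a careful accounting of how $h_+(i)^2/h_+(j)^2$ offsets the small $1/q_+(i)$ against the slowly-decaying $\nu(i,j-1)$. A secondary point is ensuring the $\nu$-estimates of Lemmas \ref{lem:nu}--\ref{lem:nu2} are applied uniformly over the whole range of $j$, tracking the $c$-dependence in the marginally supercritical case $\dlt=o(1)$.
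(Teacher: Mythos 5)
Your plan is structurally close to the paper's for the approach from below (both bound $\sum_{j<X_\star}S_+^\star(j)$ from \eqref{eq:sj-up} after splitting into a branching regime $j\lesssim 1/\dlt$, an intermediate range, and a near-$X_\star$ range), and your reduction of the maximum to the two extreme sums via skip-freeness and monotonicity is a clean substitute for the paper's coupling argument. For initial values above $X_\star$ you genuinely deviate: you sum the downward crossing times \eqref{eq:sj-dn} over the whole range $(X_\star,n]$, using the geometric bound $S_-(j)\lesssim j^{-1}+n/\bigl(rX_\star(j-X_\star)\bigr)$ far from $X_\star$ (with the partial-fraction telescoping) and Lemma \ref{lem:nu2} within distance $\sqrt n(\sqrt c\wedge n^{1/8})$, whereas the paper gets from $n$ down to $nx_\star+\sqrt n$ by ODE-comparison and supermartingale checkpoint arguments and only sums $S_-(j)$ over a window of width $\sqrt n$. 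Your route works and avoids the drift estimates; you also aim at the sharper bound $O(\dlt^{-1}\log c)$ where the paper is content with the cruder $O(\dlt^{-1}c^2\log c)$ coming from $\nu\le r$, $h_+\le 1$ in the intermediate range.

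There is, however, one step that fails as written, and it is exactly the point you flag as the main obstacle: the two-sided estimate $h_+(i)\asymp\dlt i$ on the scale $i\lesssim 1/\dlt$. Monotonicity of $h_+$ together with $h_+(1)\sim\dlt/r$ only yields $h_+(i)\gtrsim\dlt$, not $h_+(i)\gtrsim\dlt i$, and the weaker bound is insufficient: with $h_+(j)\gtrsim\dlt$ in the denominator of \eqref{eq:sj-up}, the branching-regime contribution is only controlled by $\sum_{j\le 1/\dlt}\sum_{i\le j}(\dlt i)^2/(\dlt^2 i)\asymp\dlt^{-3}=n^{3/2}/c^3$, which by \eqref{eq:Estar-est} need not be $o(E_\star^o)$ when $c\to\infty$ arbitrarily slowly (there $E_\star^o$ is only guaranteed to be of size $e^{(1-o(1))c^2/2}/(c\dlt)$), so the final conclusion would be lost precisely in the marginally supercritical case. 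The lower bound must instead come from the explicit formula \eqref{eq:hrmc}: $h_+(j)=\sum_{k<j}\nu(k)\big/\sum_{k<X_\star}\nu(k)\ge(1-o(1))(1-r^{-j})\ge(1-o(1))(1-1/e)\,\dlt j$ for $\dlt j\le 1$, which is the paper's \eqref{eq:hp-lb}; with that in hand your branching-regime bound $S_+^\star(j)=O(1)$ is exactly the paper's part i). A secondary, harmless slip: the pointwise claim $S_+^\star(j)=O(1/(j\dlt))$ is wrong near the top of the intermediate range, where $S_+^\star(j)$ is of order $n/\bigl(rj(X_\star-j)\bigr)$; but since $\sum_j 1/(X_\star-j)$ over that range is $O(\log c)$, your claimed total $O(\dlt^{-1}\log c)$ --- and a fortiori the lemma's stated $O(\dlt^{-1}c^2\log c)$ and $O(n\log n)$ --- survives once the $h_+$ estimate is repaired.
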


\begin{proof}
Using the natural coupling, by Lemma \ref{lem:nat-coup} it is enough to consider the initial values $X_0=1$ and $X_0=n$; we begin with $X_0=n$.\\

We break up the travel time to $X_\star$ into three checkpoints: $2nx_\star$, $nx_\star+\sqrt{n}$, and $X_\star$.\\

\noindent\textit{First checkpoint.} If $x>x_\star$ then
$$\mu(x-x_\star) = \mu(x) = x(r(1-x)-1) = rx(x_\star-x) \le -r(x-x_\star)^2 \le -(x-x_\star)^2.$$
The differential equation $y'=-y^2$ has solution flow $\phi(t,y) = 1/(1/y+t)$, so letting $\tau_1=\inf\{t\colon x_t \le 2x_\star\}$ and defining the continued process $\tld x$ by
$$\tld x_t = \phi(t - t\wedge \tau_1,x_{t \wedge \tau_1}),$$
we have $\mu_t(\tld x) \le -\tld x_t^2$ for all $t\ge 0$. Taking expectations and using Jensen's inequality,
$$\frac{d}{dt}\E[\tld x_t] \le - \E[\tld x_t^2] \le -(\E[ \tld x_t])^2,$$
which gives $\E[\tld x_t] \le \phi(t,x_0) = 1/(1/x_0+t)$. Since $x_0\le 1$, Markov's inequality then gives
$$\P(\tau_1>t) \le \P(\tld x_t > 2x_\star) \le \frac{1}{2x_\star}\frac{1}{1+t} = \frac{r}{2\dlt(1+t)}.$$
Letting $t=1/\dlt$ the above is at most $1/2$. Using the Markov property and iterating, $\P(\tau_1 > k/\dlt) \le 2^{-k}$, so 
$\E[\tau_1] \le (1/\dlt) \sum_{k \ge 0}\P(\tau_1 > k/\dlt) \le 2/\dlt$.\\

\noindent\textit{Second checkpoint.} To get from $2x_\star$ to $x_\star+1/\sqrt{n}$, let $\tau_2=\inf\{t \colon x_t \le x_\star+1/\sqrt{n}\}$ and note that if $x>x_\star$ then
$$\mu(x-x_\star) = -rx(x-x_\star) \le -rx_\star(x-x_\star) = -\dlt(x-x_\star).$$
Thus $\xi_t = e^{\dlt (t\wedge \tau_2)}(x_{t \wedge \tau_2}-x_\star)$ is a supermartingale and if $x_0 \le 2x_\star$ then $\xi_0 \le x_\star$ and
$$\P(\tau_2 > t \mid x_0 \le 2x_\star) \le \P(\xi_t \ge e^{\dlt t}/\sqrt{n}) \le e^{-\dlt}\sqrt{n}x_\star \le e^{-\dlt t}\sqrt{n}\dlt = e^{-\dlt t}c.$$
Thus $\E[\tau_2 \mid x_0 \le 2x_\star] = \int_0^{\infty}\P(\tau_2>t\mid x_0\le 2x_\star)dt \le c/\dlt$.\\

\noindent\textit{Third checkpoint.} Finally we estimate $\tau_3=\inf\{t \colon X_t=X_\star\}$, assuming $X_0 \le nx_\star+\sqrt{n}$.\\
With $S_-(j)$ as in \eqref{eq:sj-dn},
\begin{align}\label{eq:tau3}
\E[\tau_3 \mid X_0 \le nx_\star + \sqrt{n} ] \le \sum_{j=X_\star+1}^{X_\star+\sqrt{n}+1}S_-(j).
\end{align}
Since $h_+(i)=h_+(j)=1$, $q_+(j) \le q_-(j)=q_-^\star(j)$ and $q_-(i)=i \ge X_\star$ for $i,j>X_\star$,
$$S_-(j) \le \frac{1}{X_\star}\sum_{i=j}^n \nu(i-1,j).$$
If $X_\star<j \le X_\star+\sqrt{n}+1$ then using Lemma \ref{lem:nu2} with $\sigma \le 1 + 1/\sqrt{n}$,
$$\nu(i-1,j) = \frac{\nu(i-1,X_\star)}{\nu(j,X_\star)} \le (1+o(1))e^{r/2}\nu(i-1,X_\star).$$
Using again Lemma \ref{lem:nu2} and approximating the sum by a Gaussian integral we obtain
$$S_-(j) \le \frac{1}{X_\star}(1+o(1))e^{r/2}\sqrt{\frac{n\pi}{2r}}=(1+o(1))\sqrt{\frac{\pi}{2r}}e^{r/2}\frac{\sqrt{n}}{n\dlt} = O(1/\sqrt{n}\dlt),$$
since $r$ is bounded by assumption. Summing over $\sqrt{n}+1$ terms of the same size and using \eqref{eq:tau3}, we find
$$\E[\tau_3 \mid X_0 \le nx_\star + \sqrt{n} ] = O(\sqrt{n}/\sqrt{n}\dlt) = O(1/\dlt).$$
In all three cases the expected travel time is $O(c/\dlt)$, which satisfies the stated estimates - for the case $\dlt \to \dlt_\infty>0$, note that $c/\dlt = \sqrt{n}$.\\

Next we consider the case $X_0=1$.\\
From \eqref{eq:sj-up}, and since $q_-(j) \le q_+(j) \le q_+^\star(j)$ and $q_+(i) \ge i$ for $i,j<X_\star$,
$$S_+^\star(j) \le \sum_{i=1}^j \nu(i,j-1) \frac{h_+(i)^2}{h_+(j)^2}\,\frac{1}{i}.$$
Let $s_{ij}$ denote the above summands. Then,
$$\E[\tau_\star \mid X_0=1, \ X_{\tau_\star}=X_\star] = \sum_{j=1}^{X_\star-1}S_+^\star(j) \le \sum_{j=1}^{X_\star-1}\sum_{i=1}^j s_{ij}.$$
If $i\le j <X_\star$ then $\nu(i,j-1) \le r$ and $h_+(i)/h_+(j) \le 1$, which we use below. In order to obtain good enough estimates, we need to be a bit more precise. We treat the cases $\dlt \to 0$ and $\dlt \to \dlt_\infty>0$ separately.\\

\noindent\textbf{Case 1: $\dlt \to 0$.} Let $c=\sqrt{n}\dlt$, so $c\to\infty$ and $c=o(\sqrt{n})$. We treat the sum in three parts:
\begin{enumerate}[noitemsep,label={\roman*)}]
\item $1 \le i \le j \le 1/\dlt$, 
\item $1 \le i \le 1/\dlt < j \le X_\star-1$ and
\item $1/\dlt < i \le j < X_\star$.
\end{enumerate}
\noindent\textit{Part i).} From \eqref{eq:hp-ub}, $h_+(i) \le \dlt i$, and since the denominator $\sim 1/(1-1/r)$,
$$h_+(j) = \frac{\sum_{k=0}^{X_0-1}\nu(k)}{\sum_{k=0}^{X_\star-1}\nu(k)} \ge \frac{(1-r^{-j})/(1-1/r)}{(1+o(1)/(1-1/r)} = (1-o(1))(1-r^{-j}).$$
Since $\dlt\to 0$, $(1+\dlt)^{-j} = ((1+\dlt)^{1/\dlt})^{-\dlt j} \to e^{-\dlt j}$ uniformly over $\dlt j \le 1$. Since $e^{-x} \le 1-(1-1/e)x$ for $x \in [0,1]$, if $\dlt j \le 1$ then
\begin{align}\label{eq:hp-lb}
h_+(j) \ge (1-o(1))(1-e^{-\dlt j}) \ge (1-o(1))(1-1/e)\dlt j
\end{align}
which is at least $\dlt j/2$ for large $n$, since $1-1/e > 1/2$. Thus, $s_{ij} \le r(i^2/(j/2)^2)(1/i) = 4ri/j^2$, so
$$\sum_{j=1}^{\lf 1/\dlt \rf}\sum_{i=1}^j s_{ij} \le \sum_{j=1}^{\lf 1/\dlt \rf}\frac{4r}{j^2}\sum_{i=1}^j i \le \sum_{j=1}^{\lf 1/\dlt \rf} 2r \le 2r/\dlt.$$
\noindent\textit{Part ii).} Since $q_-(k)/q_+(k)\ge 1/r$, $\nu(0,i) \ge r^{-i}=(1+\dlt)^{-i} \ge e^{-\dlt i}$ for each $i$. Since $q_-(k)\le q_+(k)$ for $k<X_\star$ and $i\le \lf 1/\dlt \rf$, $\nu(i,j-1) \le \nu(\lf 1/\dlt \rf,j-1)$, so
$$\nu(i,j-1) = \frac{\nu(0,j-1)}{\nu(0,i)} \le e^{\dlt i}\nu(0,j-1).$$
Thus if $i \le 1/\dlt < j < X_\star$ then
$$\nu(i,j-1) = \frac{\nu(0,j-1)}{\nu(0,i)} \le e^1\nu(0,j-1).$$
Since $j>\lf 1/\dlt \rf$ and $j\mapsto h_+(j)$ is non-decreasing, using \eqref{eq:hp-lb}, $h_+(j) \ge (1-o(1))(1-1/e)\dlt \lf 1/\dlt \rf$ is at least $1/2$ for large $n$, since $\dlt \to 0$ implies $\dlt\lf 1/\dlt \rf \to 1$. Using again $h_+(i) \le \dlt i$ and combining,
$$\sum_{j=\lf 1/\dlt \rf+1}^{X_\star-1}\sum_{i=1}^{\lf 1/\dlt \rf} s_{ij} \le \sum_{j=\lf 1/\dlt \rf+1}^{X_\star-1} e^1 \nu(0,j-1) \sum_{i=1}^{\lf 1/\dlt \rf} \frac{(\dlt i)^2}{1/4}\frac{1}{i}.$$
We easily estimate
$$\sum_{i=1}^{\lf 1/\dlt \rf} \frac{(\dlt i)^2}{1/4}\frac{1}{i} \le 4\dlt^2\sum_{i=1}^{\lf 1/\dlt \rf}i \le 4\dlt^2\frac{(1/\dlt)^2}{2} = 2.$$
Using \eqref{eq:nu-sum0},
$$\sum_{j=\lf 1/\dlt \rf +1}^{X_\star -1}e^1\nu(0,j-1) \le (1+o(1))/\dlt.$$
Combining the two, the sum is at most $(2+o(1))/\dlt$.\\

\noindent\textit{Part iii).} This part is the easiest; we simply use $\nu(i,j-1) \le r$, $h_+(i)/h_+(j) \le 1$ and $1/q_+(i) \le 1/i$ and treating the sum as a right-endpoint Riemann sum,
$$\sum_{j=\lf 1/\dlt \rf +1}^{X_\star-1}\sum_{i=\lf 1/\dlt \rf +1}^j \frac{1}{i} \le \sum_{j=\lf 1/\dlt \rf +1}^{X_\star-1}r(\log(j)-\log(1/\dlt)).$$
We can combine the logs as $\log(\dlt j)$, which is increasing in $j$. Treating the sum as a left-endpoint Riemann sum of the function $\log(x)$ with interval widths $\dlt$ and noting $\dlt X_\star \le n\dlt^2 = c^2$, the sum is at most
$$\frac{1}{\dlt}(\dlt X_\star(\log(\dlt X_\star) - 1) - \dlt(1/\dlt)(\log(\dlt(1/\dlt))-1) \le \frac{1}{\dlt}(c^2(\log(c^2)-1) + 1).$$
Combining all three parts, we find
$$\E[\tau_\star \mid X_0=1, \ X_{\tau_\star}=X_\star] \le \frac{1}{\dlt}(c^2\log(c^2) - c^2 + O(1)) \le \frac{1}{\dlt} 2c^2\log(c)$$
for large $n$, since $c\to\infty$.\\

\noindent\textbf{Case 2: $\dlt \to \dlt_\infty >0$.} Since $1/\dlt=O(1)$ in this case, the whole sum can be treated as in part iii) above. Since $s_{ij} \le r/i$, 
$$\sum_{j=1}^{X_\star-1}\sum_{i=1}^j s_{ij} \le \sum_{j=1}^{X_\star-1}(1+\log j).$$
Treating the sum as a left-endpoint Riemann sum, it is at most
$$X_\star \log(X_\star) - 1\log(1) \le n\dlt \log(n \dlt) = O(n\log(n)).$$
\end{proof}

\subsubsection{Fall time}

Here we show that the time to hit zero after the last visit to $X_\star$ is small compared to the sojourn time. The following is an equivalent formulation of Lemma \ref{lem:fall}.

\begin{lemma}\label{lem:fall-time}
$\E[\tau \mid X_0=X_\star, X_{\tau_\star}=0] = o(E_\star^o)$.
\end{lemma}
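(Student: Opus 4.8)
Here is how I would approach Lemma \ref{lem:fall-time}.

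\smallskip

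\noindent The plan is to reduce the statement to a bound on the expected descent time of $X$ under the conditioned law $P^0$, and then estimate that sum of crossing times via \eqref{eq:sj0-dn}, using the branching-process comparison and the asymptotics of $\nu$ already developed. First I would make the reduction precise. By the strong Markov property at the last visit $\tau_\star^o$ to $X_\star$, the fall time $\tau-\tau_\star^o$, conditioned on $X$ reaching $X_\star$, has the law of the first passage to $0$ of $X$ started at $X_\star$ and conditioned never to return to $X_\star$ --- equivalently, conditioned on $X_{\tau_\star}=0$ with $\tau_\star$ read as the first hitting time of $\{0,X_\star\}$ after $X$ leaves $X_\star$. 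On this event the first jump is forced to be to $X_\star-1$ (a jump to $X_\star+1$ makes a subsequent visit to $X_\star$ inevitable), and thereafter $X$ runs with the rates $q^0_\pm$ of \eqref{eq:cond-trans} and, being a birth--death chain confined to $\{0,\dots,X_\star-1\}$ and absorbed at $0$, visits every intermediate level on its way down. Telescoping over levels and applying the strong Markov property once more gives
$$E[\tau\mid X_0=X_\star,\ X_{\tau_\star}=0]=\frac{1}{q(X_\star)}+\sum_{j=1}^{X_\star-1}S_-^0(j),$$
so since $1/q(X_\star)=O(1/X_\star)=o(1)$ it suffices to show $\sum_{j=1}^{X_\star-1}S_-^0(j)=o(E_\star^o)$.

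\smallskip

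\noindent Next I would rewrite $S_-^0(j)$ in terms of $\nu$ and $h_-$. Substituting $q^0_\pm(k)=q_\pm(k)h_-(k\pm1)/h_-(k)$ into \eqref{eq:sj0-dn}, the products defining $\nu^0$ telescope against the $h_-$ factors, giving $\nu^0(i-1,j)=\bigl(\nu(j)/\nu(i-1)\bigr)\,h_-(i-1)h_-(i)/\bigl(h_-(j)h_-(j+1)\bigr)$, whence
$$S_-^0(j)=\frac{q_+(j)}{q_-(j)}\;\frac{\nu(j)}{h_-(j-1)h_-(j)}\sum_{i=j}^{X_\star-1}\frac{h_-(i)^2}{\nu(i-1)\,q_-(i)}.$$
Since $q_+(j)/q_-(j)=r(1-j/n)\in(1,r]$, $\nu(i)\le\nu(i-1)\le r\,\nu(i)$, and $h_-(j)\le h_-(j-1)\le(1+r)h_-(j)$ (as $h_-(j-1)-h_-(j)=\nu(j-1)/\sum_k\nu(k)\le r\,h_-(j)$), this is, up to a factor bounded in terms of $r$ alone, comparable to $\dfrac{\nu(j)}{h_-(j)^2}\displaystyle\sum_{i=j}^{X_\star-1}\dfrac{h_-(i)^2}{\nu(i)\,q_-(i)}$.

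\smallskip

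\noindent Then I would split $\{1,\dots,X_\star-1\}$ into small values $j\le M$ ($M$ as in the proof of Lemma \ref{lem:super-die}), medium values $M<j\le\lf X_\star/2\rf$, and large values $\lf X_\star/2\rf<j<X_\star$. For $j\le M$, the branching-process coupling in the proof of Lemma \ref{lem:super-die} (Case 1) shows that, after a time-change by a factor $1+o(1)$, $X$ under $P^0$ is dominated below $M$ by a subcritical bbp $Z$ of parameter $1-\gma$ with $\gma\sim\dlt$; on the overwhelmingly likely event that the $P^0$-chain started at $j$ does not reach $M$ before hitting $j-1$ this gives $S_-^0(j)\le(1+o(1))\sum_{k\ge0}(1-\gma)^k/(j+k)$, which is $\le\log(1/(j\dlt))+O(1)$ for $j\le1/\dlt$ and $O(1/(j\dlt))$ for $1/\dlt<j\le M$; the complementary event contributes a negligible amount. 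Hence $\sum_{j\le M}S_-^0(j)=O(1/\dlt)+O\!\bigl(\dlt^{-1}\log(\dlt M)\bigr)=O\!\bigl(\dlt^{-1}(1+\log c)\bigr)$ when $\dlt\to0$ (using $\dlt M\sim2\sqrt c$), and $O(\log n)$ when $\dlt\to\dlt_\infty>0$. On the medium and large blocks I would estimate $\nu$, $h_-$ (via Lemma \ref{lem:meta-thr}) and $\sum_i h_-(i)^2/(\nu(i)q_-(i))$ exactly as in the proof of Lemma \ref{lem:Lstst}: on the medium block $\nu$ decays geometrically with ratio comparable to $1-\dlt$ and $h_-(i)$ is comparable to $\nu(i)$, so $S_-^0(j)$ is of order $1/(j\dlt)$ (up to a negligible correction from the part of the inner sum near $X_\star$), contributing $O\!\bigl(\dlt^{-1}\log(X_\star/M)\bigr)=O(\dlt^{-1}\log c)$; on the large block Lemma \ref{lem:nu2} and the Gaussian approximation for sums of $\nu$ near $X_\star$ give $S_-^0(j)$ of order $1/(\dlt(X_\star-j))$ for $X_\star-j\gtrsim\sqrt n$ and of order $(X_\star-j)/X_\star$ for $X_\star-j\lesssim\sqrt n$, contributing $O(\dlt^{-1}(1+\log c))$. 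Altogether $\sum_{j=1}^{X_\star-1}S_-^0(j)=O\!\bigl(\dlt^{-1}(1+\log c)\bigr)$ when $\dlt\to0$ and $O(\log n)$ when $\dlt\to\dlt_\infty>0$. Comparing with the lower bound \eqref{eq:Estar-est} on $E_\star^o$ --- namely $E_\star^o\gtrsim\dlt^{-1}c^{-1}e^{(1-o(1))c^2/2}$, resp.\ $e^{(1-o(1))V_\infty n}$, while $nV_\star=(1-o(1))c^2/2\to\infty$ --- one has $c(1+\log c)=e^{o(nV_\star)}$ and $\log n=e^{o(nV_\star)}$, so in both cases $\sum_{j=1}^{X_\star-1}S_-^0(j)=o(E_\star^o)$, which is the assertion.

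\smallskip

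\noindent The step I expect to be the main obstacle is the small-value block. A crude telescoping of the inner sum yields only $S_-^0(j)=O(1/(j\dlt))$ uniformly for $j\le\lf X_\star/2\rf$, hence a bound of order $\dlt^{-1}\log n$ for the whole sum; when $c\to\infty$ slowly enough this is \emph{not} $o(E_\star^o)$, since then $e^{nV_\star}$ is of order $e^{(1-o(1))c^2/2}$ and can be far smaller than $\log n$. Capturing the true behaviour $S_-^0(j)$ of order $\log(1/(j\dlt))$ for $j\ll1/\dlt$ --- which makes the small block only $O(1/\dlt)$ rather than $O(\dlt^{-1}\log(1/\dlt))$ --- seems to require the branching-process comparison rather than the formula. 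A secondary point is that the part of $\sum_i h_-(i)^2/(\nu(i)q_-(i))$ supported near $X_\star$ feeds a $j$-dependent correction into $S_-^0(j)$ for small and medium $j$; one checks using Lemma \ref{lem:nu2} that its total contribution to $\sum_j S_-^0(j)$ is $O(\sqrt n/c^3)=o(1/\dlt)$, hence harmless. The medium and large blocks are otherwise routine given the asymptotics of $\nu$ already in hand from the proof of Lemma \ref{lem:Lstst}.
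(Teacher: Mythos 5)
Your overall strategy coincides with the paper's: reduce the conditioned fall time to $\frac{1}{q(X_\star)}+\sum_{j<X_\star}S_-^0(j)$ via \eqref{eq:sj0-dn}, estimate the sum, and compare with \eqref{eq:Estar-est}; your telescoping identity expressing $\nu^0$ through $\nu$ and $h_-$ is correct, and the orders of magnitude you claim for $S_-^0(j)$ in the three blocks are right. But there are genuine gaps. The main one is the small block. You bound $S_-^0(j)$ for $j\le M$ by the descent time of the dominating bbp ``on the overwhelmingly likely event'' that the $P^0$-chain does not reach $M$ before hitting $j-1$, and simply assert that ``the complementary event contributes a negligible amount.'' That assertion carries real weight exactly in the delicate regime you yourself single out ($c\to\infty$ arbitrarily slowly): the naive estimate (escape probability times the expected descent from $M$ back to $j-1$, the latter bounded only by the crude $\sum_k O(1/(k\dlt))=O(\dlt^{-1}\log c)$), summed over $j\le M$, is of order $\dlt^{-1}\log(1/\dlt)\log c$, no better than the $\dlt^{-1}\log n$ bound you set out to beat. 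To make the bad event negligible one must exploit that for the dominant $j$ (within $O(1/\dlt)$ of $M$) both the escape probability and the return time scale with $M-j$, and one needs an a priori bound on $S_-^0(k)$ for $j\le k\le M$ to avoid circularity; none of this is in the sketch. A secondary gap: your medium/large-block estimates and the ``$O(\sqrt n/c^3)$'' correction rest on uniform two-sided asymptotics for $h_-(j)$ over the whole range $1\le j<X_\star$ (e.g.\ $h_-(j)\asymp\nu(j)$ for medium $j$, $h_-(X_\star-m)\asymp m\,\dlt\,\nu(X_\star)/r$ for $m\lesssim\sqrt n$); Lemma \ref{lem:meta-thr} only gives $h_-$ at small starting points, so these would have to be derived from Lemma \ref{lem:nu2}, which your write-up treats as already available.

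It is worth seeing how the paper sidesteps precisely the difficulty you flag. It never brings $h_-$ into this estimate: it bounds the conditioned ratios $q_+^0(j)/q_-^0(j)$ directly (by $1-b_j+o(b_j)$ away from $X_\star$ and $1-b_j^2+o(b_j^2)+O(1/n)$ inside the window) and, crucially, exchanges the order of summation, so that in the corner $1\le j\le i\le 1/\dlt$ the inner sum obeys $\sum_{j\le i}\nu^0(i,j)\le(1+o(1))i$, cancelling the $1/i$ and giving $O(1/\dlt)$ with no logarithm and no branching-process comparison. Its resulting total, $O(c^2/\dlt)$ when $\dlt\to 0$ (and $O(X_\star)$ when $\dlt\to\dlt_\infty>0$), is cruder than your claimed $O(\dlt^{-1}(1+\log c))$ but already $o(E_\star^o)$ since $c\to\infty$; so the extra sharpness you are paying for — and where your unproved steps sit — is not needed. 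The plan is repairable, but as written the negligibility claim in the small block, the key point, is not established.
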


\begin{proof}
Let $L_\star^0$ denote the above expectation. With $S_-^0(j)$ as in \eqref{eq:sj0-dn},
$$L_\star^0 = \sum_{j=1}^{X_\star}S_-^0(j).$$
Since we condition on $X_{\tau_\star}=0$, the initial jump off $X_\star$ is to $X_\star-1$ with rate $q_+(X_\star)+q_-(X_\star)$ that we denote $q_-^0(X_\star)$, after which we use the rates $q_\pm^0$ given by \eqref{eq:cond-trans}. Thus $S_{X_\star} = 1/q_-^0(X_\star)$. For $j\le X_\star-1$, $q_+^0(j) \le q_+(j)$ and $q_-^0(j) \ge q_-(j)$, so $q_+^0(j)/q_-^0(j) \le q_+(j)/q_-(j) \le r$. Moreover, $q_-^0(i) \ge q_-(i)=i$, so
$$S_-^0(j) \le r\sum_{i=j}^{X_\star-1}\nu^0(i-1,j)\, \frac{1}{i}.$$
Let $s_{ij}$ denote the summands. Summing over $j$ and exchanging the order of summation,
$$L_\star^0 = \frac{1}{q_-^0(X_\star)} + r\sum_{i=1}^{X_\star-1}\sum_{j=1}^i s_{ij}.$$
The first term is at most $1/X_\star$ which is clearly $o(E_\star^o)$. To estimate the sum we need more information about $\nu^0$, so we first estimate the ratios
\begin{align}\label{eq:cond-down-rates}
\frac{q_+^0(j)}{q_-^0(j)} = \frac{q_+(j)}{q_-(j)} \, \frac{h_-(j+1)/h_-(j)}{h_-(j-1)/h_-(j)}
\end{align}
of the conditioned rates given by \eqref{eq:cond-trans}. To do so we use the formulas \eqref{eq:sdie-hp} and \eqref{eq:sdie-hm}. Since $\nu(j-1,k) \le \nu(j,k)$ for $j < X_\star$, from \eqref{eq:sdie-hm},
\begin{align}\label{eq:sdie-hm1}
\frac{h_-(j-1)}{h_-(j)} \ge 1 + \frac{1}{\sum_{k=j}^{X_\star-1}\nu(j,k)}
\end{align}
which simplifies some calculations. Define $\sigma_j=(X_\star-j)/\sqrt{n}$ and similarly for $\sigma_k$, and let $\Sigma=n^{1/8}\wedge \sqrt{c}$.\\

\noindent\textit{Estimation for $\sigma_j \le \Sigma.$} By Lemma \ref{lem:nu2}, uniformly over $0 \le \sigma_k \le \sigma_j \le n^{1/8}$,
$$\nu(j,k) = \nu(j,X_\star)/\nu(k,X_\star) \sim e^{(-\sigma_j^2 + \sigma_k^2)r/2},$$
and so
$$\sum_{k=j}^{X_\star-1}\nu(j,k) \sim \sqrt{n}\int_0^{\sigma_j} e^{-(\sigma_j-\sigma_k)(\sigma_j+\sigma_k)r/2}d\sigma_k.$$
Changing variables to $u=\sigma_j-\sigma_k$, $\sigma_j+\sigma_k = 2\sigma_j-u$ and the integral becomes
$$\int_0^{\sigma_j} e^{-u(2\sigma_j-u)r/2}du \le \int_0^{\sigma_j} e^{-u\sigma_j r/2}du \le \frac{2}{\sigma_j r}.$$
Letting $b_j = (1-\ep_n)\sigma_j r/\sqrt{n}$ with $\ep_n\to 0$ sufficiently slowly and using \eqref{eq:sdie-hp} and \eqref{eq:sdie-hm1} it follows that
\begin{align*}
\frac{h_-(j-1)}{h_-(j)} &\ge 1 + b_j/2 \ \text{and} \\
\frac{h_-(j+1)}{h_-(j)} &\le 1 - b_j/2.
\end{align*}
Since $\sigma_j \le n^{1/8}$, $b_j=o(1)$ so $h_-(j+1)/h_-(j-1) \le 1-b_j+o(b_j)$. On the other hand
\begin{align}\label{eq:q-ud-ratio}
\frac{q_+(j)}{q_-(j)} &= r(1-j/n) = (1 + r(x_\star-j/n)) \nonumber \\
&= 1 + r\sigma_j/\sqrt{n} + O(1/n) = 1 + (1+o(1))b_j+O(1/n).
\end{align}
Using \eqref{eq:cond-down-rates} and the above estimates,
\begin{align}\label{eq:nu0-small}
\frac{q_+^0(j)}{q_-^0(j)} &\le (1+(1+o(1))b_j+ O(1/n))(1-b_j+O(b_j^2)) = 1-b_j^2 + o(b_j^2) + O(1/n).
\end{align}
\noindent\textit{Estimation for $\sigma_j \ge \Sigma$}. Recall the upper bound from Lemma \ref{lem:nu}:
$$\nu(j,k) \le \exp(-n(V((k+1)/n)-V((j+1)/n)).$$
Using the fact that $V$ is non-decreasing and $V'$ is non-increasing on $[0,x_\star]$,
$$n(V((k+1)/n)-V((j+1)/n)) \ge ((k - j ) \wedge \sqrt{n})V'((j+1+\sqrt{n})/n).$$
With this bound,
\begin{align}\label{eq:nu-sum}
\sum_{k=j}^{X_\star-1}\nu(j,k) \le \frac{1}{1- e^{-V'((j+1+\sqrt{n})/n)}} + (X_\star-j)e^{-\sqrt{n}V'(j+1+\sqrt{n}/n)},
\end{align}
the first at most $\sqrt{n}$ terms forming a partial geometric series, and the last at most $X_\star-j$ terms each contributing at most a constant. Since $V'(x)=\log(r(1-x))$ and $r(1-x) = 1 + r(x_\star-x)$,
\begin{align*}
e^{V'((j+1+\sqrt{n})/n)} &= 1 + r(x_\star-(j+1+\sqrt{n})/n) \\
&= 1 + r\big((X_\star-j)/n - 1/\sqrt{n} + O(1/n)) \\
&\ge 1 + r(\sigma_j-2)/\sqrt{n}
\end{align*}
for large $n$. This easily gives the bound $\sqrt{n}/(r(\sigma_j-2))$ on the first term on the RHS of \eqref{eq:nu-sum}. To bound the second term note that $\sigma_j \le \sqrt{n}$ and $r\ge 1$, and that $1+x\ge e^{x/2}$ for $x\in[0,1]$, so $1+r(\sigma_j-2)/\sqrt{n} \ge e^{(\sigma_j-2)/2\sqrt{n}}$. Using this on the second term on the RHS of \eqref{eq:nu-sum} and combining the two estimates, for large $n$
$$\sum_{k=j}^{X_\star-1}\nu(j,k) \le \frac{\sqrt{n}}{r(\sigma_j-2)} + \sqrt{n}\sigma_je^{-(\sigma_j-2)/2}.$$
Since $xe^{-x/2} \to 0$ faster than $1/x$ as $x\to\infty$, using $b_j=(1-\ep_n)r\sigma_j/\sqrt{n}$ with $\ep_n\to 0$ slowly enough, since $\Sigma\to\infty$ it follows that uniformly over $\sigma_j \ge \Sigma$,
$$\sum_{k=j}^{X_\star-1}\nu(j,k) \le 1/b_j,$$
and so
$$\frac{h_-(j+1)}{h_-(j-1)} \le \frac{1-b_j}{1+b_j}.$$
Since $\sigma_j \ge \Sigma\to\infty$, $b_j=\omega(1/n)$, and using \eqref{eq:q-ud-ratio}, $q_+(j)/q_-(j) = 1 + (1+o(1))b_j$. Using \eqref{eq:cond-down-rates}, uniformly over $j \le X_\star-\Sigma\sqrt{n}$,
\begin{align}\label{eq:nu0-big}
\frac{q_+^0(j)}{q_-^0(j)} \le (1+(1+o(1))b_j)\frac{1-b_j}{1+b_j} = 1-b_j + o(b_j).
\end{align}

\noindent\textbf{Case 1: $\dlt \to 0$.} Since $\nu^0(i,i-1) \to 1$ uniformly over $i$, we can work with $\nu^0(i,j)$ instead of $\nu^0(i-1,j)$. Again, we break the sum into parts; the decomposition is similar to the one in the second half of the proof of Lemma \ref{lem:equil-time}, except that the third part has been further subdivided into three parts, for a total of five:
\begin{enumerate}[noitemsep,label={\roman*)}]
\item $1 \le j \le i \le 1/\dlt$, 
\item $1 \le j \le 1/\dlt < i \le X_\star-\Sigma\sqrt{n}$,
\item $1/\dlt < j \le i \le X_\star-\Sigma\sqrt{n}$,
\item $1/\dlt < j \le X_\star-\Sigma\sqrt{n} < i \le X_\star-1$, and
\item $X_\star-\Sigma\sqrt{n} < j \le i \le X_\star-1$.
\end{enumerate}
Note that $\sigma_i \ge \Sigma$ in parts i-iii and $\sigma_j \ge \Sigma$ in parts i-iv.\\

\noindent\textit{Part i).} Note that if $j = o(X_\star)$, which is the case if $j\le 1/\dlt$, then $\sigma_j \sim \sqrt{n}x_\star$ and $b_j \sim \dlt$, so
$$\nu^0(i,j) \le (1-\dlt+o(\dlt))^{i-j} \le e^{-(1+o(1))\dlt(i-j)},$$
and treating as a partial geometric sum,
$$\sum_{j=1}^i\nu^0(i,j) \le \frac{1-e^{-(1+o(1))\dlt i}}{1-e^{-(1+o(1))\dlt}} \le \frac{(1+o(1))\dlt i}{\dlt}=(1+o(1))i.$$
Thus
$$\sum_{i=1}^{\lf 1/\dlt \rf}\sum_{j=1}^i s_{ij} \le \sum_{i=1}^{\lf 1/\dlt \rf}\frac{1}{i}(1+o(1))i \le (1+o(1))/\dlt.$$
\noindent\textit{Part ii).} If $i>1/\dlt$ then from the above,
$$\sum_{j=1}^{\lf 1/\dlt \rf}\nu^0(i,j) \le \frac{1}{1-e^{-(1+o(1))\dlt}} \le (1+o(1))/\dlt.$$
Noting that $X_\star \le n\dlt = \sqrt{n}c$ and $1/\dlt = \sqrt{n}/c$,
$$\sum_{i=\lf 1/\dlt \rf+1}^{X_\star-\Sigma\sqrt{n}}\sum_{j=1}^{\lf 1/\dlt \rf} s_{ij} \le \frac{1+o(1)}{\dlt}(\log(\sqrt{n}c)-\log(\sqrt{n}/c)) = \frac{2+o(1)}{\dlt}\log(c).$$
\noindent\textit{Part iii).} Since $i\ge j$, $b_j = (1-\eps_n)r(X_\star-j)/n \le (1-\eps_n)r(X_\star-i)/n = b_i$, $\nu^0(i,j) \le e^{-(1-o(1))b_i(i-j)}$ and
$$\sum_{j \le i}\nu^0(i,j) \le 1/(1-e^{-(1-o(1))b_i}) \sim 1/b_i = n/(X_\star -i)$$
uniformly over $i$, since $b_i \le \dlt$ and $\dlt \to 0$. This gives
$$\sum_{i =\lf 1/\dlt \rf+1}^{\lf X_\star - \Sigma\sqrt{n} \rf}\sum_{j=\lf 1/\dlt \rf+1}^i s_{ij}
\le \sum_{i =\lf 1/\dlt \rf+1}^{\lf X_\star - \Sigma\sqrt{n} \rf}\frac{n}{i(X_\star-i)} 
\le \frac{n}{X_\star}\sum_{i =\lf 1/\dlt \rf+1}^{\lf X_\star - \Sigma\sqrt{n} \rf}\big(\frac{1}{i} + \frac{1}{X_\star-i}\big).$$
Treating the sums as Riemann sums and noting $n/X_\star \le n/(n\dlt-1) \sim 1/\dlt$, $\dlt X_\star\le n\dlt^2=c^2$ and $X_\star/(\Sigma\sqrt{n})\le \sqrt{n}\dlt/\Sigma = o(\sqrt{c})$, this is at most
$$\frac{1+o(1)}{\dlt}\left(\log(X_\star)-\log(1/\dlt) + \log(X_\star) - \log(\Sigma\sqrt{n})\right) \le \frac{1+o(1)}{\dlt}(c^2 + o(\sqrt{c})).$$
\noindent\textit{Part iv).} Writing as a product and using \eqref{eq:nu0-small} on the first term, then proceeding as in part iii) on the sum, for $i\ge X_\star-\Sigma\sqrt{n} \ge j$,
\begin{align*}
\sum_{j \le X_\star-\Sigma\sqrt{n}}\nu^0(i,j) 
&= \nu^0(i,X_\star-\Sigma\sqrt{n})\sum_{j \le X_\star-\Sigma\sqrt{n}}\nu^0(X_\star-\Sigma\sqrt{n},j) \\
&\le e^{n^{5/8}O(1/n)}/(1 - e^{-(1-o(1))\Sigma/\sqrt{n})} = (1+o(1))\sqrt{n}/\Sigma.
\end{align*}
Since $\Sigma=o(c)=o(X_\star/\sqrt{n})$, $1/i \sim 1/X_\star$ and
\begin{align*}
\sum_{i=X_\star-\Sigma\sqrt{n}+1}^{X_\star-1}\sum_{j=\lf 1/\dlt \rf+1}^{X_\star-\Sigma\sqrt{n}}s_{ij} 
&\le (1+o(1))\frac{\sqrt{n}}{\Sigma}\sum_{i=X_\star-\Sigma\sqrt{n}+1}^{X_\star-1}\frac{1}{i} \\
&\le (1+o(1))\frac{\sqrt{n}}{\Sigma}\frac{\Sigma\sqrt{n}}{X_\star} \sim \frac{n}{X_\star} \le 1/\dlt.
\end{align*}

\noindent\textit{Part v).} Using \eqref{eq:nu0-small} as in part iv), $\nu^0(i,j) \le e^{\Sigma\sqrt{n}O(1/n)}=1+o(1)$. Again, $1/i \sim 1/X_\star$. Since there are at most $\Sigma^2n$ terms in the sum and $\Sigma \le \sqrt{c}$, it is bounded by
$$(1+o(1))\frac{\Sigma^2n}{X_\star} = (1+o(1))\frac{c}{\dlt}.$$
In all five parts, the sum is $O(c^2/\dlt)$; referring to \eqref{eq:Estar-est}, this is $o(E_\star^o)$.\\

\noindent\textbf{Case 2: $\dlt \to \dlt_\infty>0$.} Since by \eqref{eq:nu0-big}, $q_+^-(j)/q_-^0(j) \le 1$ for $j\le X_\star-\Sigma\sqrt{n}$ and by \eqref{eq:nu0-small}, $q_+^0(j)/q_-^0(j) \le 1 + O(1/n)\le e^{O(1/n)}$ for $X_\star-\Sigma\sqrt{n} \le j < X_\star$, and since $\Sigma \le n^{1/8}$, $\nu^0(i,j) \le e^{n^{5/8}O(1/n)} = 1+o(1)$ for $j \le i < X_\star$. Thus,
$$\sum_{i=1}^{X_\star-1}\sum_{j=1}^i s_{ij} \le (1+o(1)\sum_{i=1}^{X_\star-1}\frac{1}{i} \, i \sim X_\star,$$
and $X_\star \le n\dlt = o(E_\star^o)$, again by \eqref{eq:Estar-est}.
\end{proof}

\subsubsection{Exponential limit}\label{sec:exp-limit}

Here we prove Lemma \ref{lem:exp}. By the strong Markov property, this is equivalent to showing that $\tau_\star^o/E_\star^o$ converges in distribution to exponential with mean 1, assuming $X_0=X_\star$. Let $\Phi$ denote the natural coupling, so that for each $j\in \{0,\dots,N\}$, $((\Phi(j,t))_{t\ge 0}$ is a copy of the logistic process with initial value $j$, and by Lemma \ref{lem:nat-coup}, $\Phi(i,t)\le \Phi(j,t)$ for all $t$ if $i\le j$, and let
$$\tau_\star(j)=\inf\{t>0 \colon \Phi(j,t)\in \{0,X_\star\}\} \quad \text{and} \quad \tau_\star^o(j) = \sup\{t>0 \colon \Phi(j,t)=X_\star\}.$$
We give a sufficient condition for $(\tau_\star^o-\tau_\star)/E_\star^o$ to have an exponential limit.

\begin{lemma}\label{lem:suff-exp}
Let $\ol E_\star^o = \E[\tau_\star^o(n)]$ and assume that
\begin{enumerate}[noitemsep]
\item $\P(\tau_\star^o(j) = \tau_\star^o(n) \mid \Phi(j,\tau_\star(j))=X_\star) = 1-o(1)$ uniformly over $j \in \{1,\dots,n\}$ and
\item uniformly over $j \in \{1,\dots,n\}$ and $t>0$,
$$\P(\tau_\star^o(n)> t \ol E_\star^o \mid \Phi(j,\tau_\star(j))=X_\star) \ge \P(\tau_\star^o(n)> t \ol E_\star^o)-o(1).$$
\end{enumerate}
Then for each $t>0$, $\P(\tau_\star^o(X_\star) > tE_\star^o) \to e^{-t}$.
\end{lemma}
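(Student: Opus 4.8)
The plan is to reduce the claim to a statement about $\tau_\star^o(n)$, prove an approximate loss-of-memory inequality for $G_n(t):=P(\tau_\star^o(n)>t\,\ol E_\star^o)$, extract a subsequential limit of exponential form, and then pin down the rate. For the \textbf{reduction}, I would use the strong Markov property at $\tau_\star(n)$ — a genuine stopping time at which $\Phi(n,\tau_\star(n))=X_\star$, since the $n$-walk descends through $X_\star$ — to write $\tau_\star^o(n)\overset{d}{=}\tau_\star(n)+\tau_\star^o(X_\star)$ with independent summands. Then $\ol E_\star^o=E[\tau_\star(n)]+E_\star^o$, and by Lemma~\ref{lem:equil-time} $E[\tau_\star(n)]=o(E_\star^o)$, so $\ol E_\star^o\sim E_\star^o$ and $\tau_\star(n)/\ol E_\star^o\cp 0$; hence $\tau_\star^o(X_\star)/E_\star^o$ and $\tau_\star^o(n)/\ol E_\star^o$ have the same distributional limits, and it is enough to show $G_n(t)\to e^{-t}$ for each $t>0$.

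Next I would condition the $n$-walk at time $u=s\,\ol E_\star^o$, writing $J=\Phi(n,u)$ and $E_j=\{\Phi(j,\tau_\star(j))=X_\star\}$. On $\{J=0\}$ the walk is already absorbed; on $\{J\ne0\}$ the Markov property gives $\{\tau_\star^o(n)>(s+t)\ol E_\star^o\}=\{\tau_\star^o(J)>t\,\ol E_\star^o\}$ for a fresh copy from $J$. Since $\tau_\star^o(j)\le 0$ off $E_j$, setting $\psi_t(j)=P(\tau_\star^o(j)>t\,\ol E_\star^o)=P(E_j)\,P(\tau_\star^o(j)>t\,\ol E_\star^o\mid E_j)$ one gets $G_n(s+t)=E[\1_{J\ne0}\psi_t(J)]$ and, crucially, $E[\1_{J\ne0}P(E_J\mid J)]=P(\tau_\star^o(n)>u)=G_n(s)$. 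For the lower bound I would apply condition~1 to replace $\tau_\star^o(j)$ by $\tau_\star^o(n)$ inside the conditional probability (cost $o(1)$, uniform in $j$) and then condition~2 to get $P(\tau_\star^o(j)>t\,\ol E_\star^o\mid E_j)\ge G_n(t+\ep)-o(1)$, whence $G_n(s+t)\ge G_n(s)\,G_n(t+\ep)-o(1)$. For the upper bound I would use the monotone coupling of Appendix~A, which gives $\tau_\star^o(j)\le\tau_\star^o(n)$ a.s.\ (after $\tau_\star^o(n)$ both walks stay below $X_\star$), so $\psi_t(j)\le G_n(t)$, together with $P(J\ne0)=P(\tau>u)\le G_n(s-\ep)+o(1)$, valid because the fall time $\tau-\tau_\star^o(n)$ has mean $o(\ol E_\star^o)$ by Lemma~\ref{lem:fall-time}; this yields $G_n(s+t)\le G_n(s-\ep)G_n(t)+o(1)$, all errors uniform in $s,t$.

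Having these two inequalities, I would pass to a subsequential limit $G$ via Helly's selection theorem; after letting $\ep\downarrow0$ they give $G(s)G(t)\le G(s+t)\le G(s^-)G(t)$, so $G(s+t)=G(s)G(t)$ on a dense set, forcing $G(t)=e^{-\lambda t}$ with $\lambda\in[0,\infty]$, and Markov's inequality $G_n(t)\le 1/t$ rules out $\lambda=0$. To get $\lambda=1$ I would note $\int_0^\infty G_n=E[\tau_\star^o(n)]/\ol E_\star^o=1$ for all $n$, so Fatou gives $\int_0^\infty G\le1$, i.e.\ $\lambda\ge1$, and then establish uniform integrability of $\tau_\star^o(n)/\ol E_\star^o$ — for instance via a second-moment bound from the excursion decomposition $\tau_\star^o(n)\overset{d}{=}\tau_\star(n)+\sum_{k\le R}\rho_k$ with $R$ geometric of parameter $p_\star$, the $\rho_k$ i.i.d.\ return-excursion lengths of mean $L_\star$ and $\ol E_\star^o\sim L_\star/p_\star$, so that $E[(\sum_{k\le R}\rho_k)^2]=p_\star^{-1}E[\rho_1^2]+2p_\star^{-2}L_\star^2$ with $E[\rho_1^2]=o(L_\star^2/p_\star)$ from the crossing-time estimates of Section~\ref{sec:cross-time}, and $E[\tau_\star(n)^2]$ controlled by the sub-exponential tail bounds in Lemma~\ref{lem:equil-time}. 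This forces $\int_0^\infty G=1$, hence $\lambda=1$; all subsequential limits then coincide, so $G_n(t)\to e^{-t}$, and with the reduction the lemma follows.

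The main obstacle is the identification $\lambda=1$ — equivalently, ruling out escape of mass of $\tau_\star^o(n)/\ol E_\star^o$ to $+\infty$. Conditions~1--2 only pin down the exponential \emph{shape} of the limit; a separate quantitative integrability input is needed, which is exactly the uniform-integrability point the paper flags (for the earlier literature) in Section~\ref{sec:prev}, and which here I would extract from the crossing-time estimates. A subsidiary but essential point is that all the $o(1)$'s above must be uniform in the conditioning state $j$ (and in $s,t$) — this is precisely the role of the ``uniformly over $j$'' clauses in conditions~1--2.
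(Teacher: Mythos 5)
Your reduction and your two approximate multiplicativity inequalities are sound and essentially reproduce the paper's own argument: the paper likewise reduces to $p_n(t)=P(\tau_\star^o(n)>t\,\ol E_\star^o)$ (using Lemma \ref{lem:equil-time} to get $\ol E_\star^o\sim E_\star^o$), proves submultiplicativity via the monotone coupling and an approximate supermultiplicativity $p_n(t+s)\ge p_n(t)p_n(s)-o(1)$ via assumptions 1--2, and then identifies the exponential limit using $\int_0^\infty p_n=1$. The genuine gap is in your final step, the identification $\lambda=1$. You assert that a separate integrability input is required and that $E[\rho_1^2]=o(L_\star^2/p_\star)$ ``follows from the crossing-time estimates of Section \ref{sec:cross-time}''; but that section (and the paper as a whole) contains only first-moment formulas ($S_+^\star$, $S_-$, $S_-^0$) — there is no second-moment estimate for excursion lengths anywhere, and you sketch no argument for the claimed bound (one would need something like $E[T^2]\le 2E[T]\sup_x E_x[T]$ together with uniform bounds on \emph{unconditioned} two-sided hitting times, none of which is set up). As written, the step you yourself flag as the main obstacle is simply asserted, so the proof does not close.

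Moreover the detour is a misdiagnosis: conditions 1--2 plus $E[T(n)]=1$ already give the integrability. In the paper, Markov's inequality gives $p_n(2)\le 1/2$ and exact submultiplicativity then gives $p_n(2j)\le 2^{-j}$, hence $E[T(n)\1(T(n)>2k)]\le 2^{-(k-2)}$ uniformly in $n$ — uniform integrability for free. Within your approximate framework the same conclusion follows if you keep the error in your upper bound in its natural multiplicative position: your own derivation gives $G_n(s+t)\le \bigl(G_n(s-\ep)+o(1)\bigr)G_n(t)$ with the $o(1)$ (the fall-time term from Lemma \ref{lem:fall-time}) independent of $s,t$; taking $s=M$, integrating over $t\in(0,\infty)$ and using $\int_0^\infty G_n=1$ yields $\int_M^\infty G_n\le G_n(M-\ep)+o(1)\le \frac{1}{M-\ep}+o(1)$, which rules out escape of mass, forces $\int_0^\infty G=1$ and hence $\lambda=1$, with no second-moment estimates at all. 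Your additive form $G_n(s+t)\le G_n(s-\ep)G_n(t)+o(1)$ is too lossy for this (the errors accumulate over the unbounded tail), and that is what pushed you toward the unsupported second-moment claim. With that repair your argument closes and is essentially the paper's proof.
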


\begin{proof}
By definition, $\Phi(X_\star,\tau_\star^o(X_\star))=X_\star$, so using assumption 1, it is enough to show that $\P(\tau_\star^o(n) > tE_\star^o) \to e^{-t}$ for $t>0$. By definition of $\tau_\star^o(n)$, $\ol E_\star^o = \E[\tau_\star^o \mid X_0=n]$, so using the Markov property and then Lemma \ref{lem:appr} we find that $\ol E_\star^o = E_\star^o + \E[\tau_\star \mid X_0=n] \sim E_\star^o$. Thus it is enough to show that $p_n(t) := \P(\tau_\star^o(n) > t\ol E_\star^o) \to e^{-t}$ for $t>0$.\\

Using the natural coupling of Section \ref{sec:coup}, if $j\le n$ then for any $t>0$, $\Phi(j,t) \le \Phi(n,t)$ which implies $\tau_\star^o(j) \le \tau_\star^o(n)$. Conditioning on the value of $\Phi(n,t)$ and using the Markov property, it follows that
$$p_n(t+s) = p_n(t)\sum_j \P(\tau_\star^o(j) > s\ol E_\star^o)\P(\Phi(n,t \ol E_\star^o)=j) \le p_n(t)p_n(s),$$
i.e., $t\mapsto p_n(t)$ is submultiplicative for each $n$. Given $t,s>0$, conditioning on $\Phi(n,t \ol E_\star^o)$, using the Markov property, then using assumption 1 then assumption 2, then the law of total probability,
\begin{align*}
\P(\tau_\star^o(n)>(t+s)\ol E_\star^o \mid \tau_\star^o(n) > t \ol E_\star^o) &= \sum_j \P(\tau_\star^o(j)>s \ol E_\star^o) \mid \Phi(j,\tau_\star(j))=X_\star)\P(\Phi(n,t \ol E_\star^o)=j) \\
&\ge \sum_j \P(\tau_\star^o(n)>s \ol E_\star^o) \mid \Phi(j,\tau_\star(j))=X_\star)\P(\Phi(n,t \ol E_\star^o)=j)-o(1) \\
&\ge \sum_j \P(\tau_\star^o(n)>s \ol E_\star^o))\P(\Phi(n,t \ol E_\star^o)=j)-o(1) \\
&= p_n(s)-o(1)
\end{align*}
uniformly over $t$ and $s$. Since the above LHS is just $p_n(t+s)/p_n(t)$, we obtain $p_n(t+s) \ge p_n(t)p_n(s)-o(1)$. Combining with $p_n(t+s)\le p_n(t)p_n(s)$ it follows easily that for rational $t$, $p_n(t) = p_n(1)^t + o(1)$, and since $t\mapsto p_n(t)$ is non-increasing, the same holds for real $t>0$ by rational approximation. Thus it remains only to show that $p_n(1)\to 1/e$ as $n\to\infty$.\\

Let $T(n) = \tau_\star^o(n)/\ol E_\star^o$, so that $\E[T(n)]=1$ for each $n$. By Markov's inequality, $p_n(2) \le 1/2$, and $p_n(2j) \le p_n(2)^j = 2^{-j}$ for integer $j\ge 1$, so
$$\E[\, T(n) \1(T(n) > 2k) \, ] \le \sum_{j \ge k}2\P(T_\star^o > 2j) \le 2^{-(k-2)},$$
which $\to 0$ as $k\to\infty$ uniformly in $n$. Since $\P(T(n)>t) = \P(T(n)>1)^t + o(1)$ for each $t$, an easy approximation argument using the monotonicity of $t\mapsto \P(T(n)>t)$ then shows that
$$\E[ \, T(n) \, ] = \int_0^{\infty} \P(T(n)>1)^t dt + o(1).$$
Since $\E[ \, T(n) \, ] = 1$ for all $n$, it follows that $\P(T(n)>1) \to 1/e$ as $n\to\infty$, as desired.
\end{proof}

It remains to show assumptions 1 and 2 of Lemma \ref{lem:suff-exp} are satisfied. Let $\tau_c(j) = \inf\{t \colon \Phi(j,t)=\Phi(n,t)\}$ denote the coupling time of the two trajectories, which is a.s.~finite since both eventually hit 0. We begin by extracting a further sufficient condition, which we then prove.

\begin{lemma}\label{lem:suff-exp2}
Let $\tau_c^\star(j) = \inf\{t \ge \tau_c(j) \colon \Phi(j,t) = X_\star\}$. Suppose that
\begin{align}\label{eq:couple-hit}
\min_{j \in \{1,\dots,n\}}\P(\tau_c^\star(j) <\infty \mid \Phi(j,\tau_\star(j))=X_\star) \to 1.
\end{align}
Then assumptions 1 and 2 of Lemma \ref{lem:suff-exp} are satisfied.
\end{lemma}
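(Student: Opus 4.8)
The plan is to deduce assumptions~1 and~2 of Lemma~\ref{lem:suff-exp} from the coupling estimate \eqref{eq:couple-hit}, using the coalescence property of the natural coupling $\Phi$ from Appendix~A: once the two trajectories meet they stay together, so $\Phi(j,t)=\Phi(n,t)$ for all $t\ge\tau_c(j)$. The key preliminary observation is that $\{\tau_c^\star(j)<\infty\}\subseteq\{\Phi(j,\tau_\star(j))=X_\star\}$, because a trajectory that visits $X_\star>0$ at some time has not been absorbed at $0$ and therefore reached $X_\star$ before $0$. Hence $\{\tau_c^\star(j)<\infty\}$ equals its intersection with the conditioning event $\{\Phi(j,\tau_\star(j))=X_\star\}$ and, being the event that the stopping time $\tau_c^\star(j)$ is finite, is $\mathcal F_{\tau_c^\star(j)}$-measurable; this is what legitimises the strong Markov property at $\tau_c^\star(j)$ below.

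For assumption~1, I would show that $\tau_\star^o(j)=\tau_\star^o(n)$ on $\{\tau_c^\star(j)<\infty\}$: at time $\tau_c^\star(j)\ge\tau_c(j)$ both trajectories sit at $X_\star$, so $\tau_\star^o(j)$ and $\tau_\star^o(n)$ are each at least $\tau_c^\star(j)\ge\tau_c(j)$, and since $\Phi(j,\cdot)$ and $\Phi(n,\cdot)$ agree on $[\tau_c(j),\infty)$, their final visits to $X_\star$ occur at a common time. Thus
$$P\big(\tau_\star^o(j)=\tau_\star^o(n)\mid\Phi(j,\tau_\star(j))=X_\star\big)\ \ge\ P\big(\tau_c^\star(j)<\infty\mid\Phi(j,\tau_\star(j))=X_\star\big)=1-o(1)$$
uniformly over $j$ by \eqref{eq:couple-hit}, which is assumption~1.

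For assumption~2, observe that on $\{\tau_c^\star(j)<\infty\}$ one has $\Phi(n,\tau_c^\star(j))=X_\star$ by coalescence, so the strong Markov property at $\tau_c^\star(j)$ gives $\tau_\star^o(n)=\tau_c^\star(j)+R$, where $R$ has the law of $\tau_\star^o$ started from $X_\star$ and is independent of $\mathcal F_{\tau_c^\star(j)}$. Since $\tau_c^\star(j)\ge0$, conditioning on $\{\Phi(j,\tau_\star(j))=X_\star\}$ --- first passing, as above, to the smaller $\mathcal F_{\tau_c^\star(j)}$-measurable event $\{\tau_c^\star(j)<\infty\}$ --- yields
$$P\big(\tau_\star^o(n)>t\,\ol E_\star^o\mid\Phi(j,\tau_\star(j))=X_\star\big)\ \ge\ \big(1-o(1)\big)\,P\big(\tau_\star^o(X_\star)>t\,\ol E_\star^o\big)$$
uniformly over $j$ and $t$. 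It then remains to compare $P(\tau_\star^o(X_\star)>t\,\ol E_\star^o)$ with $p_n(t+\eps)=P(\tau_\star^o(n)>(t+\eps)\ol E_\star^o)$. Since $X_0=n$ and paths move by $\pm1$ with $X_\star<n$, one has $X_{\tau_\star}=X_\star$, so the strong Markov property at $\tau_\star(n)$ gives $\tau_\star^o(n)=\tau_\star(n)+\tau_\star^o(X_\star)'$ with $\tau_\star^o(X_\star)'\stackrel{(d)}{=}\tau_\star^o(X_\star)$ independent of the approach time $\tau_\star(n)$; by Lemma~\ref{lem:equil-time} and $\ol E_\star^o\sim E_\star^o$ we have $E[\tau_\star(n)]=o(\ol E_\star^o)$, hence $P(\tau_\star(n)>\eps\,\ol E_\star^o)=o(1)$ by Markov's inequality. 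Therefore $p_n(t+\eps)\le P(\tau_\star^o(X_\star)'>t\,\ol E_\star^o)+P(\tau_\star(n)>\eps\,\ol E_\star^o)=P(\tau_\star^o(X_\star)>t\,\ol E_\star^o)+o(1)$, and combining this with the previous display gives assumption~2.

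The only real difficulty is bookkeeping: one must verify that $\{\tau_c^\star(j)<\infty\}$ is the correct $\mathcal F_{\tau_c^\star(j)}$-measurable event on which to apply the strong Markov property, and that every error term is $o(1)$ uniformly in $j\in\{1,\dots,n\}$ and $t>0$ --- uniformity in $j$ comes from the $\min_j$ in \eqref{eq:couple-hit} together with Lemma~\ref{lem:equil-time}, while uniformity in $t$ is automatic since no error term depends on $t$. No estimates are needed beyond \eqref{eq:couple-hit}, Lemma~\ref{lem:equil-time}, and the coalescence of the coupling.
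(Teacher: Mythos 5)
Your proposal is correct and follows essentially the same route as the paper's proof: coalescence of the coupling gives assumption 1 directly from \eqref{eq:couple-hit}, and assumption 2 follows by the strong Markov property at $\tau_c^\star(j)$ together with the decomposition $\tau_\star^o(n) \stackrel{(d)}{=} \tau_\star(n) + \tau_\star^o(X_\star)$, Lemma \ref{lem:equil-time}, and $\ol E_\star^o \sim E_\star^o$. If anything, your version is slightly more careful than the paper's in stating the intermediate steps as inequalities (via the inclusion $\{\tau_c^\star(j)<\infty\}\subseteq\{\Phi(j,\tau_\star(j))=X_\star\}$) rather than approximate equalities.
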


\begin{proof}
\textit{Assumption 1.} Since $\Phi(j,t)=\Phi(n,t)$ for all $t\ge \tau_c(j)$ (see Lemma \ref{lem:nat-coup}), the event $\tau_\star^o(j) = \tau_\star^o(n)$ is equivalent to the event that $\Phi(j,t)=\Phi(n,t)=X_\star$ for some $t>0$, which in turn is equivalent to the event $\tau_c^\star(j)<\infty$, and assumption 1 follows directly from \eqref{eq:couple-hit}.\\

\noindent \textit{Assumption 2.} First note that, since $\P(\Phi(X_\star,\tau_\star(X_\star)) = X_\star)=1$, it follows from the above that
\begin{align}\label{eq:n-X0-lasthit}
\P(\tau_\star^o(X_\star)=\tau_\star^o(n)) = 1-o(1).
\end{align}
Next, using the strong Markov property, $\tau_\star^o(n)$ conditioned on $\tau_c^\star(j)<\infty$ is equal in distribution to $\tau_c^\star(j)$ conditioned on $\tau_c^\star(j)<\infty$, plus an independent copy of $\tau_\star^o(X_\star)$. In particular, $\tau_\star^o(n)$, conditioned on $\tau_c^\star(j)<\infty$, dominates $\tau_\star^o(X_\star)$ (with no conditioning). Since $\tau_c^\star(j)<\infty$ implies $\Phi(j,\tau_\star(j))=X_\star$, using \eqref{eq:couple-hit}, then the above observation, then \eqref{eq:n-X0-lasthit}, it follows that uniformly over $j$,
\begin{align*}
\P(\tau_\star^o(n) > t \ol E_\star^o \mid \Phi(j,\tau_\star(j)) = X_\star)
& = \P(\tau_\star^o(n) > t \ol E_\star^o \mid \tau_c^\star(j)<\infty )\P(\tau_c^\star(j)<\infty \mid \Phi(j,\tau_\star(j))=X_\star) \\
&= \P(\tau_\star^o(n) > t \ol E_\star^o \mid \tau_c^\star(j)<\infty )(1-o(1)) \\
& \ge \P(\tau_\star^o(X_\star) > t \ol E_\star^o)(1-o(1)) \\
&= (\P(\tau_\star^o(n) > t \ol E_\star^o) - o(1))(1-o(1)) \\
&= \P(\tau_\star^o(n) > t \ol E_\star^o) - o(1).
\end{align*}
\end{proof}

Finally we prove the hypothesis of Lemma \ref{lem:suff-exp2}. To do so we show that within a short time after the paths started from $j$ and from $n$ reach $X_\star$, they meet (if they have not met already), and then with probability $1-o(1)$ their common trajectory hits $X_\star$ at least once more before going to $0$.

\begin{lemma}
As $n\to\infty$,
\begin{align}\label{eq:couple-hit2}
\min_{j \in \{1,\dots,n\}}\P(\tau_c^\star(j) <\infty \mid \Phi(j,\tau_\star(j))=X_\star) \to 1.
\end{align}
\end{lemma}

\begin{proof}
Let $X^j,X^n$ denote the processes $(\Phi(j,t))_{t\ge 0},(\Phi(n,t))_{t\ge 0}$. As we will see, when $X^j,X^n>(1+\eps)X_\star/2$, the drift tends to push them together. Let
$$\tau_b(j) = \inf\{t>\tau_\star(j) \vee \tau_\star(n)\colon \max(|X^j_t-nx_\star|,|X^n_t -nx_\star|) \ge nx_\star/4\}.$$
If $\tau_c(j) < \tau_\star(j) \vee \tau_\star(n)$ then $X_t^j=X_t^n=X_\star$ with $t=\tau_\star(j) \vee \tau_\star(n)$ which implies $\tau_c^\star(j)<\infty$. On the other hand, if $\tau_\star(j) \vee \tau_\star(n) < \tau_c(j) < \tau_b(j)$ then $X_{\tau_c(j)}^j=X_{\tau_c(j)}^n \ge 3X_\star/4$. Using the strong Markov property and the fact that $X^j$ and $X^n$ remain together once they meet, on the latter event it follows from Lemma \ref{lem:meta-thr} that $\tau_c^\star(j)<\infty$ with probability $1-o(1)$ uniformly over $j$. Thus it is enough to show that
\begin{align}\label{eq:coup-time}
\max_j \P(\tau_b(j) \wedge \tau_c(j) < \tau_b(j) \mid \Phi(j,\tau_\star(j))=X_\star) \to 1.
\end{align}
We begin with a lower bound on $\tau_b(j)$, that ensures both $X^j,X^n$ remain fairly close to $X_\star$ for a while after they hit it. Then, we estimate the drift and diffusivity of $X^j-X^n$ assuming both are at least $3X_\star/4$, and with the help of the lower bound, deduce \eqref{eq:coup-time}.\\

\noindent\textit{Lower bound on $\tau_b(j)$.} Let $W=X-nx_\star$. Then
\begin{align*}
\mu(W) &= \mu(X) = X(r(1-X/n)-1) = rX(x_\star-X/n) = -rXW/n \ \text{and} \\
\sigma^2(W) &= \sigma^2(X) = X(r(1-X/n)+1) \le (1+r)X.
\end{align*}
Since $W$ jumps by $\pm 1$, if $|W| \ge 1$ then $\mu(|W|) = \sgn(W)\mu(W)$ and $\sigma^2(|W|) = \sigma^2(W)$. Suppose $nx_\star/8 \le |W| \le nx_\star/4$, noting that $1\le nx_\star/8$ for large $n$. Since $3nx_\star/4 \le X \le 5nx_\star/4$,
\begin{align*}
\sigma^2(|W|) &= O(nx_\star) = O(n\dlt), \\
\mu(|W|) &\le -(3rx_\star/4)(nx_\star/8) =  -3n\dlt^2/32r\ \text{and} \\
|\mu(|W|)| &\le (5rx_\star/4)nx_\star/4 = O(n\dlt^2).
\end{align*}
We shall use Lemma \ref{lem:driftbar} (apologies for overloading notation). In the notation of Lemma \ref{lem:driftbar}, let $X=|W|-nx_\star/8$, $x=nx_\star/8 = n\dlt/8r$, $\mu_\star = 3n\dlt^2/32r = 3c^2/32r$, $\sigma^2_\star = Cn\dlt$ and $C_{\mu_\star}=Cn\dlt^2$ for some $C>0$ and $C_\Delta=1/2$. Since $\Delta_\infty(X)=1$, $\Delta_\infty(X)\mu_\star/\sigma^2_\star = 3\dlt/32Cr$ is at most $1/2$ is $C>0$ is chosen large enough. Then, $\Gamma = \exp(\Omega(c^2))$ and $x/16C_{\mu_\star}=\Omega(1/\dlt)$, so
$$\P(\sup_{t \le (1/\dlt)\exp(\Omega(c^2))}|W_t| > nx_\star/4 \ \mid \ |W_0| \le nx_\star/8) = o(1).$$
Applying this bound to $X^j$, $X^n$ from time $\tau_\star(j)$, respectively $\tau_\star(n)$, we find that
\begin{align}\label{eq:apriori}
\P(\tau_b(j) \le (1/\dlt)\exp(\Omega(c^2)) \mid \Phi(j,\tau_\star(j))=X_\star)=o(1)
\end{align}
uniformly over $j\in \{1,\dots,n\}$.\\

\noindent\textit{Upper bound on $\tau_b(j) \wedge \tau_c(j)$.}\\
Let $F(x) = x(r(1-x)-1) = rx(x_\star-x)$ and $G(x)=x(r(1-x)+1) \ge x$, so that
$$\mu(X)=nF(X/n) \quad \text{and} \quad \sigma^2(X) = nG(X/n) \ge X.$$
We have $F'(x) = r(x_\star-2x)$, so if $3x_\star/4 \le x \le 5x_\star/4$ then
$$F'(x) \in [F'(5x_\star/4),F'(3x_\star/4)]=[-3rx_\star/2,-rx_\star/2] = [-3\dlt/2,-\dlt/2].$$
 If $X^j,X^n \ge 3nx_\star/4$ and $X^j \ne X^n$, then letting $U=X^j-X^n$, by the mean value theorem,
$$\mu(U)/(U) = n(F(X^j/n)-F(X^n/n))/(X^j-X^n) \in [-3\dlt/2,-\dlt/2]$$
and since $X^j$ and $X^n$ evolve independently until they collide,
$$\sigma^2(U) = n(G(X^j/n) + G(X^n/n)) \ge 3nx_\star/2 \ge n\dlt/r.$$
Since $U$ jumps by $\pm 1$ and takes values in $\Z$, if $U\ne 0$ then $\mu(|U|)=\sgn(U)\mu(U)$ and $\sigma^2(|U|) = \sigma^2(U)$, so letting $V=|U|$, the above implies that conditional on $\Phi(j,\tau_\star(j))=X_\star$,
$$\mu_t(V)\in [-(3\dlt/2)V_t,-(\dlt /2)V_t] \quad \text{and} \quad \sigma^2_t(V) \ge n\dlt/r$$
for all $\tau_\star(j) \vee \tau_\star(n)\le t<\tau_c(j)\wedge \tau_b(j)$. If this interval is empty, then $\tau_c(j)\wedge \tau_b(j)<\tau_\star(j)\vee \tau_\star(n)\le \tau_b(j)$ so there is nothing to show. Otherwise, since on this time interval, $\sgn(U_t)$ is fixed, then given $\sgn(U_{\tau_\star(j)})$, on the same time interval $V$ is a Markov chain with state space a subset of $\Z$. Let $u =  \lf \sqrt{n} \rf$, $t_0 = \tau_\star(j)$ and $t_1 = \tau_b(j) \wedge \inf\{t>t_0 \colon V_t = u\}$ and define recursively
$$t_i = \tau_b(j) \wedge \inf\{t>t_{i-1}\colon V_t \in \{0,u,2u\} \setminus V_{t_{i-1}}\}.$$
Let $\rho_i = t_i-t_{i-1}$ for $i=1,\dots,N = \min\{i\colon V_{t_i}=0 \ \text{or} \ t_i=\tau_b(j)\} = \min\{i \colon t_i = \tau_c(j) \wedge \tau_b(j)\}$. By the a priori bound, we may assume $|V_{t_0}| \le nx_\star/2 = n\dlt/2r$. Then, $\xi_t= e^{\dlt (t_0 + t \wedge \rho_1)/2}V_{t_0 + t \wedge \rho_1}$ is a supermartingale with $\xi_0 \le n\dlt/2r$, so
$$\P(\rho_1>t) \le \P(\xi_t > e^{\dlt t/2}u) \le e^{-2\dlt t}\frac{n\dlt/2r}{\sqrt{n}-1} \sim e^{-2\dlt t}c/2r.$$
The above probability is $O(1/c)=o(1)$ if $t=(1/\dlt)\log c$. Using a similar estimate with $\xi_0=2u$,
$$\P(\rho_i>t \mid V_{\rho_{i-1}}=2u) \le e^{-2\dlt t}\frac{2u}{u} \le 2e^{-2\dlt t},$$
and integrating over $t$, $\E[\rho_i \mid V_{\rho_{i-1}}=2u] \le 1/\dlt$. To estimate $\rho_2$, we note that for $\alpha>0$
\begin{align*}
\mu_t(\alpha V^2) &= 2 \alpha V_t\mu(V_t) + \alpha^2\sigma^2_t(V) \\
& \ge -3\alpha \dlt V_t^2 + \alpha^2n\dlt/r
\end{align*}
so for $t_1\le t < t_2$, since $V_t \le 2u \le 2\sqrt{n}$, choosing $\alpha=13r$ we have $\mu_t(\alpha V^2) \ge  \alpha n\dlt$. Thus $V_{t_1 + t \wedge \rho_2}^2-n\dlt(t \wedge \rho_2)$ is a submartingale. Since $V_{t_1}=u$ and $V_{t_2} \le 2u$, using optional stopping,
$$\E[\rho_2] \le \frac{1}{n\dlt}(\E[V_{t_2}^2-V_{t_1}^2]) \le \frac{1}{n\dlt}(4u^2-u^2) \le \frac{3n}{n\dlt} = \frac{3}{\dlt}.$$
By the Markov property, the same estimate holds for $\E[\rho_i \mid V_{t_{i-1}}=u]$. Using simply that $\mu_t(V) \le 0$ and optional stopping, $\P(V_{t_i}=2u \mid V_{t_{i-1}}=u) \le 1/2$. Summarizing, on the time interval $[\tau_\star(j)\vee \tau_\star(n),\tau_c(j) \wedge \tau_b(j)]$, $V$ hits $u$, then goes to $2u$ and back to $u$ at most geometric$(1/2)$ number of times before either $V_{t_i}=0$ or $t_i=\tau_b(j)$. As shown above, $\rho_1$, the time to first hit $u$, is at most $(1/\dlt)\log c$ with probability $1-o(1)$, and the expected time to go from $u$ to either $0$, or to $2u$ and back to $u$ is at most $(4/\dlt)$. Using Wald's lemma and $\E[\text{geometric}(1/2)=2]$,
$$\sum_{i=2}^{N-1} \rho_i \le 8/\dlt.$$
Using Markov's inequality, the sum is at most $(1/\dlt)\log c$ with probability $1-o(1)$, so combining with the estimate on $\rho_1$, we find that uniformly over $j \in \{1,\dots,n\}$,
$$\P(\tau_b(j) \wedge \tau_c(j) - \tau_\star(j) \vee \tau_\star(n) > (2/\dlt)\log c \mid \Phi(j,\tau_\star(j))=X_\star)=o(1).$$
From Lemma \ref{lem:equil-time}, if $\dlt \to 0$ then $\E[\tau_\star(j)],\E[\tau_\star(n)]=O(c^2\log(c)/\dlt)$ uniformly over $j\in \{1,\dots,n\}$, so using Markov's inequality, with probability $1-o(1)$ uniformly in $j$, $\tau_\star(j)\vee \tau_\star(n) = o(e^{\eps c^2}/\dlt)$ for any fixed $\eps>0$. Summing the two and combining with \eqref{eq:apriori}, we obtain \eqref{eq:coup-time}.
\end{proof}

\section*{Appendix: Stochastic Calculus}
We recall a useful probability estimate and diffusion limit result, stated in the context of semimartingales. We give here a very brief list of definitions, enough for the acquainted reader to understand the context for this paper -- for an overview of the theory see \cite{jacod}.
Recall that a semimartingale (abbreviate s-m) is an optional process $X$ that can be written 
$$X=X_0 + M + A,$$
where $M$ is a local martingale and $A$ has finite variation. It is special if $A$ can be taken to be predictable, in which case we write
$$X = X_0+X^m + X^p,$$
where $X^m$ is the martingale part and $X^p$ is the (predictable) compensator. A sufficient condition for $X$ to be special is if it has bounded jumps, i.e., if the process of jumps $\Delta X_t = X_t - X_{t^-}$ satisfies $|\Delta X| \le \gamma$ for some non-random $\gamma<\infty$. If so, let $\Delta_\infty(X)$ denote the least such $\gamma$. In this case, a fortiori $X^m$ is locally square-integrable, i.e., the predictable quadratic variation $\langle X \rangle$ exists.\\

A process is \emph{quasi-left continuous} (qlc) if $\Delta X_T=0$ a.s.~on $\{T<\infty\}$ for any predictable time $T$. Feller processes, which include continuous time Markov chains, are quasi-left continuous. As noted in \cite{naming-game}, if $X$ is special and $X^m$ is locally square-integrable then $X$ is quasi-left continuous iff both $\langle X^m \rangle$ and $X^p$ are continuous. This motivates the following definition (not found in other references):
\begin{definition}
Let $X$ be a special s-m with $X^m$ locally square-integrable. Then $X$ is \emph{quasi-absolutely continuous} or qac if both $X^p$ and $\langle X^m \rangle$ are absolutely continuous. In this case define the \emph{drift} $\mu(X)$ and \emph{diffusivity} $\sigma^2(X)$ by
\begin{equation}\label{eq:mu-sig}
\mu_t(X) = \frac{d}{dt}X^p_t, \quad \sigma^2_t(X) = \frac{d}{dt}\langle X^m \rangle_t.
\end{equation}
\end{definition}

Any right-continuous continuous-time Markov chain $X$ on a finite state space $S \subset \R$ has finite variation so is a s-m. 
Index the possible transitions by $i \in \{1,\dots,m\}$ for some $m$, with $q_i:S\to \R_+$ the rates and $\Delta_i:S\to S-S$ the jumps. Writing $X$ as a sum of jumps and using the standard linear and quadratic martingales for Poisson processes, it is easy to show that $X$ is qac and has
\begin{align}\label{eq:mc-dd}
\mu_t(X) = \sum_{i=1}^m q_i(X_t)\Delta_i(X_t) \quad \text{and} \quad \sigma^2_t(X) = \sum_{i=1}^m q_i(X_t)(\Delta_i(X_t))^2.
\end{align}

Our first result gives a strong (exponential in $\mu/\sigma^2$) lower bound on the escape time from a barrier with negative drift. It is proved in \cite{SIS-SDE}.
\begin{lemma}[Drift barrier]\label{lem:driftbar}
 Fix $x>0$ and let $X$ be a qac s-m on $\R$ with jump size $\Delta_\infty(X) \le x/2$. Suppose there are positive reals $\mu_\star,\sigma^2_\star,C_{\mu_\star},C_\Delta$ with $\max\{\Delta_\infty(X)\mu_\star/\sigma^2_\star, 1/2\} \le C_\Delta$ so that if $0<X_t<x$ then
 $$ \mu_t(X) \leq -\mu_\star,\quad |\mu_t(X)| \leq C_{\mu_\star} \quad\hbox{and}\quad \sigma^2_t(X) \leq \sigma^2_\star.$$
Let $\Gamma = \exp(\mu_\star x /(32C_\Delta\sigma^2_\star))$. Then we have
 \begin{equation}\label{eq:driftbar}
 P\left( \ \sup_{t \le \lfloor \Gamma \rfloor x/16C_{\mu_\star} }X_t \ge x \ \mid \ X_0 \leq x/2 \ \right) \le 4/\Gamma.
 \end{equation}
\end{lemma}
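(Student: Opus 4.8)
The plan is to build an exponential supermartingale on the barrier region. I would set $\lambda = \mu_\star/(16 C_\Delta \sigma^2_\star)$ and let $Y_t = e^{\lambda X_t}$. Writing the canonical decomposition of $X$, It\^o's formula for semimartingales shows that the predictable drift of $Y$ equals $Y_t\bigl(\lambda\mu_t(X) + \tfrac12\lambda^2\sigma^2_{c,t} + \int(e^{\lambda z}-1-\lambda z)\,\nu_t(dz)\bigr)$, where $\sigma^2_{c,t}$ is the rate of the continuous part of $\langle X^m\rangle$ and $\nu_t$ is the jump intensity, supported on $[-\Delta_\infty(X),\Delta_\infty(X)]$, with $\sigma^2_t(X) = \sigma^2_{c,t} + \int z^2\,\nu_t(dz)$. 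By the elementary bound $e^u - 1 - u \le \tfrac12 u^2 e^{|u|}$ this drift is at most $Y_t\bigl(\lambda\mu_t(X) + \tfrac12\lambda^2 e^{\lambda\Delta_\infty(X)}\sigma^2_t(X)\bigr)$. The hypotheses $C_\Delta \ge \Delta_\infty(X)\mu_\star/\sigma^2_\star$ and $C_\Delta \ge 1/2$ give $\lambda\Delta_\infty(X)\le 1/16$ and hence $\tfrac12\lambda e^{\lambda\Delta_\infty(X)}\sigma^2_\star \le \mu_\star$, so on $\{0<X_{t^-}<x\}$ the drift of $Y$ is at most $Y_t\lambda\bigl(-\mu_\star + \tfrac12\lambda e^{\lambda\Delta_\infty(X)}\sigma^2_\star\bigr)\le 0$. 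Therefore, with $\tau$ the first exit time of $X$ from $(0,x)$, the process $Y_{t\wedge\tau}$ is a bounded supermartingale (it lies between $e^{-\lambda x/2}$ and $e^{3\lambda x/2}$, a jump overshooting the barrier by at most $\Delta_\infty(X)\le x/2$).

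From this I would extract the single-pass estimate. By optional stopping, $E[Y_\tau\mid X_0=v]\le e^{\lambda v}$, and since $Y\ge 0$ and $Y_\tau\ge e^{\lambda x}$ on $\{X_\tau\ge x\}$, Markov's inequality gives $P(X_\tau\ge x\mid X_0=v)\le e^{\lambda(v-x)}$; for $v\le x/2$ this is at most $e^{-\lambda x/2}=1/\Gamma$, using $\lambda x/2 = \mu_\star x/(32C_\Delta\sigma^2_\star)=\log\Gamma$. Since $\Delta_\infty(X)\le x/2$, the process can enter $(0,x)$ from $\{X\le 0\}$ only at a value in $(0,x/2]$, so \emph{every} attempt to cross the barrier begins from a state $\le x/2$ and, conditionally on the past, succeeds with probability at most $1/\Gamma$.

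The remaining task is to promote this to the bound over the horizon $t^\star = \lfloor\Gamma\rfloor\,x/(16C_{\mu_\star})$, and here is where the main difficulty lies. I would cut $[0,t^\star]$ into $\lfloor\Gamma\rfloor$ blocks of length $\delta = x/(16C_{\mu_\star})$; over one block the predictable part of $X$ moves it by at most $C_{\mu_\star}\delta = x/16$, so reaching $x$ within a block forces a matching climb of the martingale part $X^m$ over time $\delta$. One then splits $X^m$ into its small-jump part — handled by a Freedman-type exponential inequality with quadratic-variation rate $\le\sigma^2_\star$ and jumps $\le\Delta_\infty(X)$, which produces a probability that is a large power of $1/\Gamma$ — and its large-jump part, whose jumps of any fixed size $\ge\eta$ occur at rate $\le\sigma^2_\star/\eta^2$ because $\int z^2\,\nu_t(dz)\le\sigma^2_\star$. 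Interleaving these per-block estimates with the single-pass bound and the strong Markov property (restarting at each re-entry into $(0,x)$), summing over the $\lfloor\Gamma\rfloor$ blocks and collecting geometric tails, one should reach a bound of the form $4/\Gamma$. The exponential supermartingale and the single-pass estimate are routine; the obstacle is this last step, and in particular controlling the number of distinct crossings of $(0,x)$ — equivalently the occupation time of the barrier region — within $t^\star$, uniformly over the behaviour of $X$ outside $(0,x)$, so that the block-by-block union bound closes at $O(1/\Gamma)$ rather than $O(1)$. This is precisely where the magnitude bound $|\mu_t(X)|\le C_{\mu_\star}$ and the exact horizon length $x/(16C_{\mu_\star})$ are used, and where a rare single large jump onto the barrier must be balanced against the frequent but individually negligible small fluctuations.
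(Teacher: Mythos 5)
First, a caveat: the paper does not actually prove Lemma \ref{lem:driftbar}; it is quoted from \cite{SIS-SDE}, so there is no in-paper proof to compare against, and your proposal has to stand on its own. The first two steps do stand: with $\lambda=\mu_\star/(16C_\Delta\sigma^2_\star)$ the hypotheses give $\lambda\Delta_\infty(X)\le 1/16$ and $\tfrac12\lambda e^{\lambda\Delta_\infty(X)}\sigma^2_\star<\mu_\star$, so $e^{\lambda X}$ is a supermartingale while $X\in(0,x)$, and optional stopping gives the single-pass bound $e^{\lambda(v-x)}\le e^{-\lambda x/2}=1/\Gamma$ from any start $v\le x/2$. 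That part is fine.

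The gap is the step you yourself flag as unresolved, and the route you sketch for it cannot close at $4/\Gamma$. Your bookkeeping unit is ``restart at each re-entry into $(0,x)$, each attempt starts at $\le x/2$ and succeeds with probability $\le 1/\Gamma$.'' But the horizon $\lfloor\Gamma\rfloor x/(16C_{\mu_\star})$ genuinely admits on the order of $\Gamma$ such attempts: a failed attempt from $x/2$ can be over in time about $(x/2)/C_{\mu_\star}=8\cdot x/(16C_{\mu_\star})$ (drift alone does it), and nothing in the hypotheses controls the dwell time on $\{X\le 0\}$ before re-entry, since the drift/diffusivity bounds only hold on $(0,x)$. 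So ``number of attempts $\times$ per-attempt bound'' tops out at $O(1)$, not $O(1/\Gamma)$, no matter how the blocks, Freedman estimates and large-jump rates are interleaved. The ingredient your plan is missing is that only the \emph{first} attempt can start as high as $x/2$: every re-entry from $\{X\le 0\}$ lands at height $\le\Delta_\infty(X)$, and since $\lambda\Delta_\infty(X)\le 1/16$ each later attempt succeeds with probability $\le e^{1/16}e^{-\lambda x}=e^{1/16}\Gamma^{-2}$, which is what makes ``order $\Gamma$ attempts'' affordable. Even with that refinement the count of attempts is not automatically $O(\Gamma)$ --- excursions entering at height $\epsilon\ll\Delta_\infty(X)$ can fail in time $O(\epsilon^2/\sigma^2_\star)$ and be renewed instantly from the unconstrained region $\{X\le 0\}$, so arbitrarily many of them fit in the horizon --- and one has to organize the attempts by a multi-level (renewal/chaining) scheme in which an attempt is only ``registered'' once the process has climbed through a gap wider than $\Delta_\infty(X)$ inside $(0,x)$, where the bounds $|\mu_t(X)|\le C_{\mu_\star}$, $\sigma^2_t(X)\le\sigma^2_\star$ and the jump-rate bound force a minimum traversal time. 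A further, smaller, flaw in the block argument as stated: at the start of a block the process may already sit just below $x$, so ``reaching $x$ within a block forces a matching climb of $X^m$ over that block'' is not true without first conditioning on where the current upcrossing began. In short, the supermartingale and single-pass estimates are correct but are the routine half; the quantitative attempt-counting that produces $4/\Gamma$ is absent, and the specific union-bound scheme you describe would only yield a constant.
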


The next result gives a diffusion limit, assuming the drift and diffusivity converge while the jump size tends to 0. It follows from Theorem 4.1 in \cite[Chapter 7]{EthierKurtz}, and from the Lipschitz existence and uniqueness condition for SDEs, if (i) in the proof of the former result we let $\tau_n^R$ be the exit time of $X^n$ from $(1/R,R)$ instead of $(-R,R)$, as described below, and (ii) we allow that the limiting diffusion $Y$ may be defined only on the interval $[0,\zeta)$ where $\zeta:=\lim_{R\to\infty} \tau^R$, with $\tau^R$ as defined below.

\begin{lemma}[Diffusion limit]\label{lem:limproc}
Let $X^n$ be a sequence of qac semimartingales with drift and diffusivity given by functions $\mu_n,\sigma^2_n$, and suppose $a:(0,\infty)\to \R_+$ and $b:(0,\infty)\to \R$ are such that $\sqrt{a}$ and $b$ are Lipschitz on compact subsets of $(0,\infty)$. Suppose the largest jump in $X^n$ tends to $0$ as $n\to\infty$. Also assume that for each $R>0$, as $n\to\infty$
$$\sup_{|x| \le R}|\mu_n(x) - b(x)|,|\sigma_n^2(x) - a(x)| \to 0.$$
Suppose $X^n(0) \to x \in \R$ and let $\tau_n^R = \inf\{t\colon X^n(t) \notin (1/R,R) \ \text{or} \ X^n(t^-) \notin (1/R,R)\}$. Then for all but countably many $R$, $X^n(\cdot \wedge \tau_n^R)$ converges in distribution to $X(\cdot\wedge \tau^R)$, where $X$ solves the initial value problem
$$x_0=x \quad \text{and}\quad dx = b(x)dt + \sqrt{a(x)}dB$$
and $\tau^R = \inf\{t\colon X(t) \notin (1/R,R)\}$.
\end{lemma}

\end{document}